\title[Macdonald Polynomials and the Charge]{From Macdonald Polynomials to a Charge Statistic beyond Type $A$}
\author{Cristian Lenart}
\address{Department of Mathematics and Statistics, State University of New York at Albany, Albany, NY 12222}
\email{lenart@albany.edu}
\keywords{Macdonald polynomials, alcove walks, Ram-Yip formula, Kashiwara-Nakashima columns, charge statistic.}
\subjclass[2000]{Primary 05E05. Secondary 33D52, 20G42.}
\thanks{Cristian Lenart was partially supported by the National Science Foundation grant  DMS-0701044}
\DeclareMathOperator{\rev}{rev}
\newlength{\cellsize}
\newcommand\tableau[1]{
\vcenter{
\let\\=\cr
\baselineskip=-16000pt
\lineskiplimit=16000pt
\lineskip=0pt
\halign{&\tableaucell{##}\cr#1\crcr}}}
\newcommand{\tableaucell}[1]{{%
\def \arg{#1}\def \void{}%
\ifx \void \arg
\vbox to \cellsize{\vfil \hrule width \cellsize height 0pt}%
\else
\unitlength=\cellsize
\begin{picture}(1,1)
\put(0,0){\makebox(1,1){$#1$}}
\put(0,0){\line(1,0){1}}
\put(0,1){\line(1,0){1}}
\put(0,0){\line(0,1){1}}
\put(1,0){\line(0,1){1}}
\end{picture}%
\fi}}
\numberwithin{equation}{section}
\theoremstyle{plain}
\newtheorem{theorem}{Theorem}[section]
\newtheorem{proposition}[theorem]{Proposition}
\newtheorem{lemma}[theorem]{Lemma}
\newtheorem{corollary}[theorem]{Corollary}
\newtheorem{definition}[theorem]{Definition}
\newtheorem{example}[theorem]{Example}
\newtheorem{algorithm}[theorem]{Algorithm}
\theoremstyle{remark}
\newtheorem{remark}[theorem]{Remark}
\newtheorem{remarks}[theorem]{Remarks}
\newtheorem{condition}[theorem]{Condition}
\def\R{\mathbb{R}}
\def\Z{\mathbb{Z}}
\def\charge{{\rm charge}}
\def\cw{{\rm cw}}
\def\rev{{\rm rev}}
\def\sign{{\rm sign}}
\def\content{{\rm content}}
\def\level{{\rm level}}
\def\weight{{\rm weight}}
\def\endch{{\rm end}}
\def\Waff{W_{\mathrm{aff}}}
\def\h{\mathfrak{h}}
\def\hR{\mathfrak{h}^*_\mathbb{R}}
\newcommand{\casetwo}[3]{\left\{ \begin{array}{ll} #1 &\mbox{if $#2$} \\[2mm] #3 &\mbox{otherwise}\,. \end{array} \right.}
\begin{document}

\bibliographystyle{plain}

\begin{abstract} The charge is an intricate statistic on words, due to Lascoux and Sch\"utzenberger, which gives positive combinatorial formulas for Lusztig's $q$-analogue of weight multiplicities and the energy function on affine crystals, both of type $A$. As these concepts are defined for all Lie types, it has been a long-standing problem to express them based on a generalization of charge. I present a method for addressing this problem in classical Lie types, based on the recent Ram-Yip formula for Macdonald polynomials and the quantum Bruhat order on the corresponding Weyl group. The details of the method are carried out in type $A$ (where we recover the classical charge) and type $C$ (where we define a new statistic). 
\end{abstract}


\maketitle

\section{Introduction}


Charge is a statistic on words with partition content, which was originally defined (in type $A$) by Lascoux and Sch\"utzenberger~\cite{lassuc}. It is calculated by enumerating certain cycles in the given word, see 
Section~\ref{redcha}. The original application of charge was to give a positive combinatorial formula the Kostka--Foulkes polynomials, or Lusztig's $q$-analogue of weight multiplicities \cite{lusscq}.  
Nakayashiki and Yamada~\cite{naykpe} showed that the charge also expresses the energy function on affine crystals of type $A$ (the energy function is an important grading used in one-dimensional configuration sums~\cite{HKOTT:2002, HKOTY:1999}, as well as in other areas). 
Defining a charge statistic beyond type $A$ and extending some of the above applications has been a long-standing problem. 
Lecouvey~\cite{leckfp,lecccg} defined such a statistic on Kashiwara-Nakashima tableaux of types $B$, $C$, and $D$ \cite{kancgr}, based on the intricate combinatorics of the corresponding plactic monoids; but he was only able to relate his charge to the corresponding Kostka--Foulkes polynomials in very special cases.

In this paper we propose a method for defining a charge statistic in classical types. The details are carried out in types $A$ and $C$; in type $A$ we recover the classical charge, and in type $C$ we obtain a new statistic. This method is based on the theory of Macdonald polynomials and, in particular, on the Ram-Yip formula for these polynomials. The (symmetric) Macdonald polynomials \cite{macaha} are a remarkable family of orthogonal symmetric polynomials depending on parameters $q,t$. They are associated to any finite root system, and they generalize the corresponding irreducible characters, which are recovered upon setting $q=t=0$. Upon setting $q=0$, the Macdonald polynomials specialize to the corresponding Hall-Littlewood polynomials (or spherical functions for a $p$-adic group) \cite{macsfg}. Upon setting $t=0$ in simply-laced types, we obtain certain affine Demazure characters \cite{ionnmp}. The Ram-Yip formula \cite{raycfm} is a monomial formula for Macdonald polynomials of arbitrary type, which is expressed in terms of combinatorial objects called alcove walks; the latter originate in the work of Gaussent-Littelmann \cite{gallsg}  and Lenart-Postnikov \cite{lapawg,lapcmc} on combinatorial models in the representation theory of Lie algebras, namely the LS-gallery and the alcove model, respectively. 

We now present the outline of our method. We start by specializing the Ram-Yip formula to $t=0$, and by observing that the alcove walks that survive correspond to paths in the quantum Bruhat graph. This graph first arose in connection with the quantum cohomology of flag varieties \cite{fawqps}, and is obtained by adding extra down edges to the Hasse diagram of the Bruhat order on the Weyl group. Our main result is the construction of a bijection, in types $A$ and $C$, between the mentioned paths and certain column-strict fillings of Young diagrams. More precisely, these fillings are tensor products of the corresponding Kashiwara-Nakashima columns \cite{kancgr}; the latter index the basis of a fundamental representation in classical types, or the vertices of the corresponding crystals (Kashiwara's crystals encode the structure of quantum group representations as the quantum parameter goes to $0$). Based on the mentioned bijection, we translate the statistic in the Ram-Yip formula to a statistic on the mentioned fillings, which is our charge. This method highlights the fact that the charge construction (including the classical one) is based in a subtle way on the quantum Bruhat graph.

In addition to the charge construction, this paper achieves two more goals: (i) we give charge formulas for Macdonald polynomials of types $A$ and $C$ at $t=0$; (ii) we realize tensor products of type $A$ and $C$ crystals in terms of an extension of the alcove model \cite{lapcmc} based on paths in the quantum Bruhat graph (the original model is for highest weight crystals, and is based on saturated chains in Bruhat order). 

The immediate application of the type $C$ charge in this paper is that it expresses the corresponding energy function, thus generalizing the type $A$ result of Nakayashiki-Yamada mentioned above. This is proved in \cite{lascef}, based on the recent reinterpretation 
in~\cite{ST:2011} of the energy function as a grading of certain affine Demazure crystals. Thus, we have a simple combinatorial method to compute the energy function in type $C$, in addition to type $A$. Note that, from a computational perspective, neither the original definition of the energy (based on the local energy and the  combinatorial $R$-matrix), nor the construction in~\cite{ST:2011} mentioned above are very efficient. Generalizations of the results in this paper as well as of \cite{lascef} to types $B$ and $D$ will be pursued in future publications.

Another application of the results in this paper is the work in progress with A. Lubovsky on defining the main constructions of the alcove model (crystal operators, certain combinatorial transformations called Yang-Baxter moves \cite{lenccg}) for its extension mentioned above, related to tensor products of crystals. In this way, for instance, the Yang-Baxter moves would provide an efficient construction of the combinatorial $R$-matrix (for commuting tensor factors).

We now present the outline of the paper. In Section \ref{alcmod} we recall the alcove model and specialize the Ram-Yip formula to $t=0$ in  full generality. In Sections \ref{secta} and \ref{sectc} we specialize the alcove model and the above formula to types $A$ and $C$, respectively. In Sections \ref{chrev} and \ref{chc} we derive the classical (type $A$) charge and the new type $C$ charge via the method outlined above. Most of the effort goes towards constructing the inverse of the bijection mentioned above, between paths in the quantum Bruhat graph and fillings of Young diagrams. Two crucial results needed in this construction are proved in Sections \ref{coll} and \ref{colr}, respectively. In Section \ref{typebd} we discuss the additional complexity of our method in types $B$ and $D$, and describe the conjectured construction of the corresponding charge. 

\section{The alcove model and the Ram-Yip formula}\label{alcmod}

We start with some background information on finite root systems and the alcove model. Then we state the Ram-Yip formula at $t=0$.

\subsection{Root systems}\label{rootsyst}

Let $\mathfrak{g}$ be a complex semisimple Lie algebra, and $\h$ a Cartan subalgebra, whose rank is $r$.
Let $\Phi\subset \h^*$ be the 
corresponding irreducible {\it root system}, $\hR\subset \h^*$ the real span of the roots, and $\Phi^+\subset \Phi$ the set of positive roots. Let $\rho:=\frac{1}{2}(\sum_{\alpha\in\Phi^+}\alpha)$. 
Let $\alpha_1,\ldots,\alpha_r\in\Phi^+$ be the corresponding 
{\it simple roots}.
We denote by $\langle\,\cdot\,,\,\cdot\,\rangle$ the nondegenerate scalar product on $\hR$ induced by
the Killing form.  
Given a root $\alpha$, we consider the corresponding {\it coroot\/} $\alpha^\vee := 2\alpha/\langle\alpha,\alpha\rangle$ and reflection $s_\alpha$.  

Let $W$ be the corresponding  {\it Weyl group\/}, whose Coxeter generators are denoted, as usual, by $s_i:=s_{\alpha_i}$. The length function on $W$ is denoted by $\ell(\,\cdot\,)$. The {\em Bruhat order} on $W$ is defined by its covers $w\lessdot ws_\alpha$, for $\alpha\in\Phi^+$, if $\ell(ws_\alpha)=\ell(w)+1$. The mentioned covers correspond to the labeled directed edges of the {\em Bruhat graph} on $W$: 
\begin{equation}\label{brgr}w\stackrel{\alpha}{\longrightarrow} ws_\alpha\;\;\;\;\mbox{for}\;\;
w\lessdot ws_\alpha\,. \end{equation}

The {\it weight lattice\/} $\Lambda$ is given by
\begin{equation}
\Lambda:=\{\lambda\in \hR \::\: \langle\lambda,\alpha^\vee\rangle\in\Z
\textrm{ for any } \alpha\in\Phi\}.
\label{eq:weight-lattice}
\end{equation}
The weight lattice $\Lambda$ is generated by the 
{\it fundamental weights\/}
$\omega_1,\ldots,\omega_r$, which form the dual basis to the 
basis of simple coroots, i.e., $\langle\omega_i,\alpha_j^\vee\rangle=\delta_{ij}$.
The set $\Lambda^+$ of {\it dominant weights\/} is given by
$$
\Lambda^+:=\{\lambda\in\Lambda \::\: \langle\lambda,\alpha^\vee\rangle\geq 0
\textrm{ for any } \alpha\in\Phi^+\}.
$$
Let $\Z[\Lambda]$ be the group algebra of the weight lattice $\Lambda$, which  has
a $\Z$-basis of formal exponents $\{x^\lambda \::\: \lambda\in\Lambda\}$ with
multiplication $x^\lambda\cdot x^\mu := x^{\lambda+\mu}$.

Given  $\alpha\in\Phi$ and $k\in\Z$, we denote by $s_{\alpha,k}$ the reflection in the affine hyperplane
\begin{equation}
H_{\alpha,k} := \{\lambda\in \hR \::\: \langle\lambda,\alpha^\vee\rangle=k\}.
\label{eqhyp}
\end{equation}
These reflections generate the {\it affine Weyl group\/} $\Waff$ for the {\em dual root system} 
$\Phi^\vee:=\{\alpha^\vee \::\: \alpha\in\Phi\}$. 
The hyperplanes $H_{\alpha,k}$ divide the real vector space $\hR$ into open
regions, called {\it alcoves.} 
The {\it fundamental alcove\/} $A^\circ$ is given by 
$$
A^\circ :=\{\lambda\in \hR \::\: 0<\langle\lambda,\alpha^\vee\rangle<1 \textrm{ for all }
\alpha\in\Phi^+\}.
$$

\subsection{The alcove model}\label{alcovewalks}

We say that two alcoves are {\it adjacent} 
if they are distinct and have a common wall.  
Given a pair of adjacent alcoves $A$ and $B$, we write 
$A\stackrel{\beta}\longrightarrow B$ if the common wall 
is of the form $H_{\beta,k}$ and the root $\beta\in\Phi$ points 
in the direction from $A$ to $B$.  

\begin{definition}
A (reduced) {\em alcove path\/} is a sequence of alcoves 
\[A_0\stackrel{\beta_1}\longrightarrow A_1\stackrel{\beta_2}\longrightarrow \ldots
\stackrel{\beta_m}\longrightarrow A_{m}=A_*\]
of minimal length among such all sequences which connect  $A_0$ and $A_*$. If $A_0=A^\circ$ and $A_*=A^\circ+\mu$, we call the sequence of roots $(\beta_1,\ldots,\beta_m)$ a {\em $\mu$-chain} (of roots). 
\end{definition}

We now fix a dominant weight $\mu$ and an alcove path $\Pi=(A_0,\ldots,A_m)$ from $A_0=A^\circ$ to $A_m=A^\circ+\mu$. Note that $\Pi$ is determined by the corresponding $\mu$-chain of {\em positive} roots $\Gamma:=(\beta_1,\ldots,\beta_m)$. We let $r_i:=s_{\beta_i}$, and let $\widehat{r}_i$ be the affine reflection in the affine hyperplane containing the common face of $A_{i-1}$ and $A_i$, for $i=1,\ldots,m$; in other words, $\widehat{r}_i:=s_{\beta_i,l_i}$, where $l_i:=\#\{j\le i\::\: \beta_j = \beta_i\}$ is the cardinality of the corresponding set. We will call $l_i$ the {\em affine level} of the $i$th root in $\Gamma$. 

\begin{remark} The original definition of $\mu$-chains in \cite{lapawg,lapcmc} corresponds to the reverse of the $\mu$-chains in this paper. Two equivalent definitions of the original $\mu$-chains (in terms of reduced words in affine Weyl groups, and an interlacing condition) can be found in \cite{lapawg}[Definition 5.4] and \cite{lapcmc}[Definition 4.1 and Proposition 4.4]. 
\end{remark}

\begin{definition} A {\em folding pair} is a pair $(w,J)$, where $w$ is a Weyl group element and $J=\{j_1<\ldots<j_s\}\subseteq[m]:=\{1,\ldots,m\}$. The set $W\times 2^{[m]}$ of folding pairs is denoted by ${\mathcal F}(\Gamma)$.
\end{definition}

A folding pair $(w,J)=(w,\{j_1<\ldots<j_s\})$ is identified with the sequence of Weyl group elements
\begin{equation}\label{alcchain}\pi(w,J):=(w_0,w_1,\ldots,w_s)\,,\;\;\mbox{where}\;\;w_i:=wr_{j_1}\ldots r_{j_i}\,.\end{equation}
In particular, $w_0=w$, and we let $\endch(w,J):=w_s$. We also associate with $(w,J)$ the following weight:
\begin{equation}\label{defphimu}\weight(w,J):=w\widehat{r}_{j_1}\ldots \widehat{r}_{j_s}(\mu)\,.\end{equation}

\begin{definition} Given a folding pair $(w,J)=(w,\{j_1<\ldots<j_s\})$, the element $j_i$ is called a {\em positive folding} (resp. {\em negative folding}) if $w_{i-1}>w_i$ (resp. $w_{i-1}<w_i$), cf. {\rm (\ref{alcchain})}. The set of positive (resp. negative) foldings is denoted by $J^+$ (resp. $J^-$). 
\end{definition}

The above terminology can be explained as follows. Start with the alcove path $w(\Pi):=(w(A_0),w(A_1),\ldots,w(A_m))$ and successively reflect the tail of the current sequence of alcoves in the affine hyperplane containing the common face of the alcoves $w(A_{j_i-1})$ and $w(A_{j_{i}})$, for $i=s,s-1,\ldots,1$. The result is a sequence of alcoves $(A_0',\ldots,A_m')$, which is called an {\em alcove walk}. Note that $A_{j_i-1}'=A_{j_i}'$. We have $j_i\in J^+$ (resp. $j_i\in J^-$) if the alcove $A_{j_i-1}'=A_{j_i}'$ is on the positive (resp. negative) side of the corresponding reflecting hyperplane. For more details, we refer to \cite{lencfm}[Section 2.2].

\begin{example}\label{exalcwalk} {\rm Consider the dominant weight $\mu=3\varepsilon_1+\varepsilon_2$ in the root system $A_2$ (cf. Section \ref{seta} and the notation therein, which is used freely in this example). In the figure below, the alcoves $A^\circ$ and $A^\circ+\mu$ are shaded, and a reduced alcove path connecting them is shown. The corresponding $\mu$-chain is $(\alpha_{13},\alpha_{12},\alpha_{13},\alpha_{23},\alpha_{13},\alpha_{12})$. 

For $(w,J)=(123,\{1,2\})$, the associated sequence of permutations, written in one-line notation, as a chain in Bruhat order, is $(123<321>231)$, since $r_1=(1,3)$ and $r_2=(1,2)$. Thus $J^+=\{2\}$ and $J^-=\{1\}$, which can also be checked by folding the alcove path in the figure above along the affine hyperplanes corresponding to $\widehat{r}_2$ and $\widehat{r}_1$, in this order. We also have $\weight(w,J)=\varepsilon_2$, which can be easily checked via the mentioned folding, as well.

\vspace{-0.4cm}

\[\mbox{\includegraphics[scale=0.52]{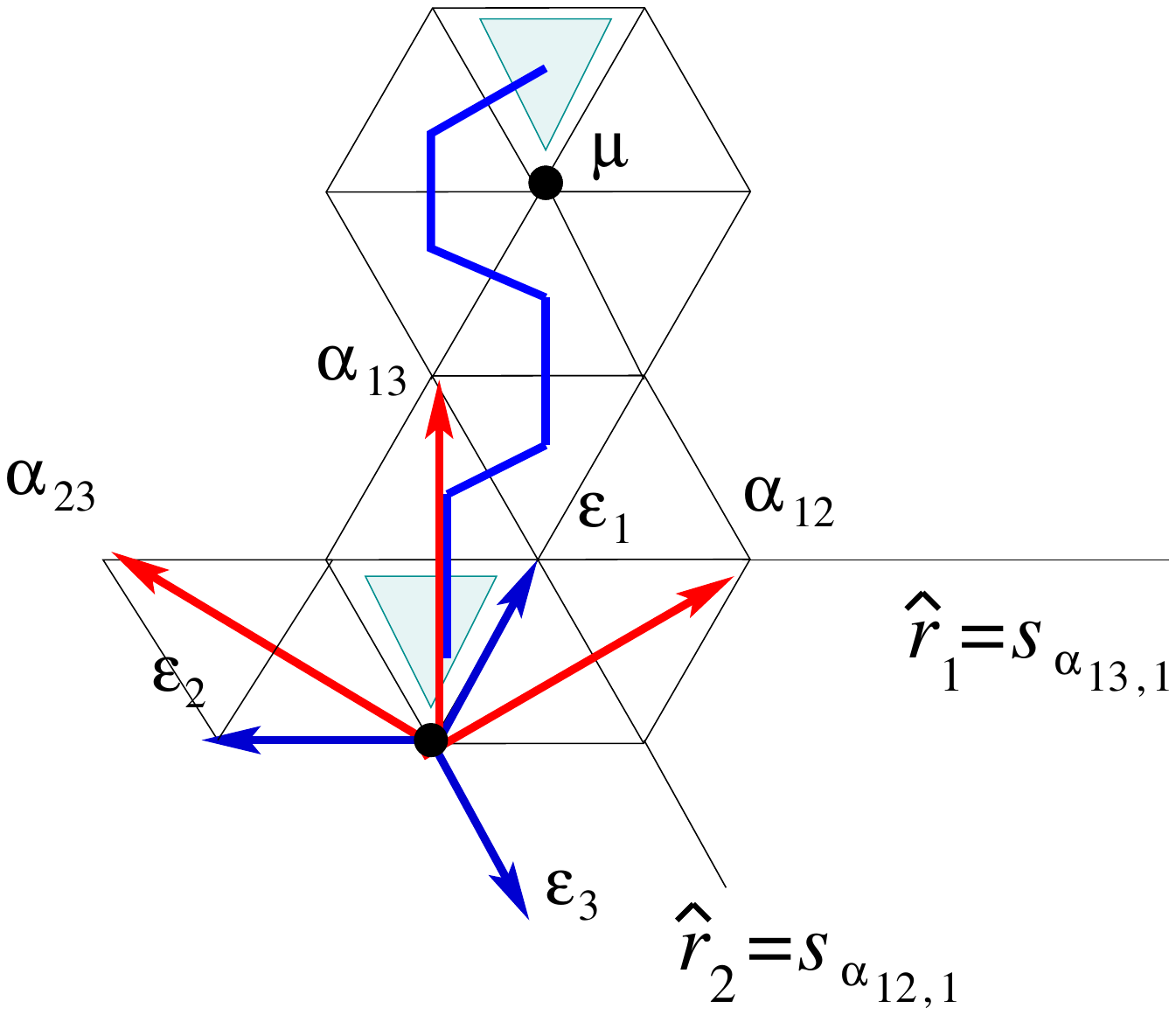}}\]

\vspace{-7.6cm}
}
\end{example}

\subsection{The Ram-Yip formula at $t=0$ and the quantum Bruhat graph} Recall that we fixed a dominant weight $\mu$ and a $\mu$-chain $\Gamma$. We now specialize the Ram-Yip formula \cite{raycfm} for the Macdonald polynomial $P_\mu(X;q,t)$ to $t=0$; this immediately leads to the formula (\ref{ryform}) below. In fact, we use the slight rephrasing of the Ram-Yip formula in terms of folding pairs, which was given in \cite{lencfm}[Section 2.2]; to be precise, the elements $(w,J)$ in ${\mathcal F}(\Gamma)$ index the monomial terms in the formula. After we set $t=0$, only the following subset of ${\mathcal F}(\Gamma)$ is relevant:
\begin{equation}\label{defbf}\overline{{\mathcal F}}(\Gamma):=\left\{(w,J)\in{\mathcal F}(\Gamma)\::\:\frac{1}{2}\left(\ell(w)+\ell(\endch(w,J))-\#J\right)+\sum_{j\in J^-}\langle\rho,\beta_j^\vee\rangle=0\right\}\,.\end{equation}
We also use the following notation related to folding pairs:
\[\level(w,J):=\sum_{j\in J^-}l_j\,.\]

\begin{theorem}\cite{raycfm} \label{hlpthm} We have
\begin{equation}\label{ryform}P_{\mu}(X;q,0)=
\sum_{(w,J)\in\overline{{\mathcal F}}(\Gamma)}q^{\level(w,J)}\,x^{\weight(w,J)}\,.\end{equation}
\end{theorem}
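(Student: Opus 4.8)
The plan is to derive \eqref{ryform} by specializing the full Ram-Yip formula of \cite{raycfm} to $t=0$, the only real work being to identify which monomial terms survive and to evaluate their coefficients. In the folding-pair formulation of \cite{lencfm}, the Ram-Yip formula reads $P_\mu(X;q,t)=\sum_{(w,J)\in{\mathcal F}(\Gamma)}C(w,J;q,t)\,x^{\weight(w,J)}$, where the coefficient $C(w,J;q,t)$ is a product of local factors, one for each crossing and each folding of the underlying alcove walk. First I would record these factors explicitly from \cite{raycfm,lencfm}: up to the precise exponents and the crossing contributions, a positive folding contributes a factor of the form $\frac{1-t}{1-q^{l_j}t^{h_j}}$ and a negative folding a factor of the form $\frac{(1-t)\,q^{l_j}t^{h_j}}{1-q^{l_j}t^{h_j}}$, with $h_j=\langle\rho,\beta_j^\vee\rangle\ge 1$. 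Since every denominator tends to $1$ as $t\to 0$, each $C(w,J;q,t)$ is regular at $t=0$, so I may pass to the limit term by term and take $C(w,J;q,0)$ as the coefficient of $x^{\weight(w,J)}$.

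The heart of the argument is to compute the order of vanishing of $C(w,J;q,t)$ at $t=0$ and to show that it equals the defect
\[D(w,J):=\tfrac12\bigl(\ell(w)+\ell(\endch(w,J))-\#J\bigr)+\sum_{j\in J^-}\langle\rho,\beta_j^\vee\rangle\,,\]
which is exactly the quantity appearing in \eqref{defbf}. To see that $D(w,J)\ge 0$, I would write $\pi(w,J)=(w_0,\ldots,w_s)$ and telescope $\ell(w_0)-\ell(w_s)=\sum_i(\ell(w_{i-1})-\ell(w_i))$ to rewrite
\[D(w,J)=\ell(w_s)+\sum_{j_i\in J^+}\tfrac12\bigl(\ell(w_{i-1})-\ell(w_i)-1\bigr)+\sum_{j_i\in J^-}\Bigl(\langle\rho,\beta_{j_i}^\vee\rangle-\tfrac12\bigl(\ell(w_i)-\ell(w_{i-1})+1\bigr)\Bigr)\,.\]
Each summand is manifestly nonnegative: the first because $\ell(w_s)\ge 0$; the positive-folding terms because a descent $w_{i-1}>w_i$ drops the length by at least $1$; and the negative-folding terms because of the standard bound $\ell(ws_\beta)-\ell(w)\le 2\langle\rho,\beta^\vee\rangle-1$, valid for every $\beta\in\Phi^+$. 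Moreover $D(w,J)=0$ forces each summand to vanish, which says precisely that $\endch(w,J)=e$, that every positive folding is a Bruhat cover, and that every negative folding realizes the extremal jump $\ell(w_i)-\ell(w_{i-1})=2\langle\rho,\beta_{j_i}^\vee\rangle-1$; equivalently, the reversed chain $e=w_s\to\cdots\to w_0=w$ is a path in the quantum Bruhat graph. This is the promised identification of the surviving terms with quantum Bruhat paths.

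It remains to match this combinatorial defect with the analytic order of vanishing. I would check factor by factor that a positive folding and a crossing contribute a nonzero value at $t=0$, that a negative folding contributes a factor of order $h_j=\langle\rho,\beta_j^\vee\rangle$ in $t$ (coming from the numerator $q^{l_j}t^{h_j}$), and that the crossing factors together contribute the remaining order $\ell(\endch(w,J))$; summing these local orders must reproduce exactly the three groups of terms in the displayed decomposition of $D(w,J)$. Consequently $C(w,J;q,0)=0$ whenever $D(w,J)>0$, while when $D(w,J)=0$ every factor contributes its constant term: the positive foldings and crossings contribute $1$, each negative folding contributes $q^{l_j}$, and their product is $q^{\sum_{j\in J^-}l_j}=q^{\level(w,J)}$. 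Since the weights $x^{\weight(w,J)}$ do not involve $t$, collecting the surviving terms yields \eqref{ryform} with index set exactly $\overline{{\mathcal F}}(\Gamma)$, i.e. the locus $D(w,J)=0$.

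I expect the main obstacle to be the bookkeeping in this last step: pinning down the exact Ram-Yip factors, in particular the crossing contributions and their total $t$-order (which must account for the $\ell(\endch(w,J))$ piece of the defect), and verifying that the combined $t$-valuation equals $D(w,J)$ rather than merely bounding it below. The inequality $\ell(ws_\beta)-\ell(w)\le 2\langle\rho,\beta^\vee\rangle-1$, together with its equality case, is the structural input that simultaneously guarantees $D(w,J)\ge 0$ and characterizes the zero-defect terms as the quantum Bruhat paths underlying the whole construction.
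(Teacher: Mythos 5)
Your proposal is correct and follows essentially the same route as the paper: the paper treats Theorem \ref{hlpthm} as an immediate specialization of the Ram--Yip formula in its folding-pair form from \cite{lencfm}, with the identification of the surviving terms carried out via exactly your telescoped, term-by-term nonnegative decomposition of the defect $D(w,J)$ (this appears verbatim as the proof of Proposition \ref{condqbg}, using the same bound $\ell(ws_\beta)-\ell(w)\le 2\langle\rho,\beta^\vee\rangle-1$ and its equality case to land on the quantum Bruhat graph). The factor-by-factor bookkeeping of the $t$-valuation that you flag as the main obstacle is precisely what the paper leaves to the cited references rather than verifying in the text.
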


We will now rephrase the formula (\ref{ryform}) in terms of the {\em quantum Bruhat graph}. The latter is defined by adding to the Bruhat graph (\ref{brgr}) the following edges, called quantum edges, labeled by positive roots $\alpha$:
\begin{equation}\label{qbrgr}w\stackrel{\alpha}{\longrightarrow} ws_\alpha\;\;\;\;\mbox{if}\;\; \ell(w s_\alpha)=\ell(w)-2\langle\rho,\alpha^\vee\rangle+1\,. \end{equation}
The quantum Bruhat graph expresses the {\em Chevalley formula} (cf. \cite{chesdc}) in the quantum cohomology of a generalized flag variety $G/B$ \cite{fawqps}. 

The rephrasing mentioned above is based on the following description of $\overline{\mathcal F}(\Gamma)$.

\begin{proposition}\label{condqbg}
Let $(w,J)=(w,(j_1<j_2<\ldots<j_s))$. We have $(w,J)\in\overline{\mathcal F}(\Gamma)$ if and only if $\endch(w,J)=1$ (the identity) and $\pi(w,J)$ yields the following path in the corresponding {quantum Bruhat graph}, cf. {\rm (\ref{alcchain})}:
\[w=w_0\stackrel{\beta_{j_1}}{\longleftarrow}w_1\stackrel{\beta_{j_2}}{\longleftarrow}\ldots\stackrel{\beta_{j_s}}{\longleftarrow}w_s=1\,.\]
\end{proposition}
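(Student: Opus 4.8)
The plan is to show that the single scalar condition defining $\overline{\mathcal F}(\Gamma)$ in (\ref{defbf}) splits, after a telescoping rearrangement, into a sum of manifestly nonnegative contributions --- one per step of $\pi(w,J)$, plus a boundary term $\ell(\endch(w,J))$ --- in such a way that the vanishing of each step contribution encodes exactly one edge of the quantum Bruhat graph, while the vanishing of the boundary term forces $\endch(w,J)=1$. Writing $\pi(w,J)=(w_0,\ldots,w_s)$ as in (\ref{alcchain}), the reverse path is $w_s\to\cdots\to w_0$, and since $r_{j_i}=s_{\beta_{j_i}}$ is an involution we have $w_{i-1}=w_i s_{\beta_{j_i}}$; thus the $i$th reverse step is precisely a candidate edge $w_i\stackrel{\beta_{j_i}}{\longrightarrow}w_i s_{\beta_{j_i}}=w_{i-1}$ labelled by the positive root $\beta_{j_i}$, matching the arrows in the statement.

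The only external input I would invoke is the standard reflection-length estimate: for $\beta\in\Phi^+$ one has $\ell(s_\beta)=2\langle\rho,\beta^\vee\rangle-1$, so $\ell(w)=\ell((ws_\beta)s_\beta)\le\ell(ws_\beta)+\ell(s_\beta)$ yields
\[\ell(ws_\beta)\ \ge\ \ell(w)-2\langle\rho,\beta^\vee\rangle+1\,.\]
With this I define, for each $i$, the defect
\[\delta_i:=\casetwoex{\tfrac12\bigl(\ell(w_{i-1})-\ell(w_i)-1\bigr)}{j_i\in J^+}{\tfrac12\bigl(\ell(w_{i-1})-\ell(w_i)-1\bigr)+\langle\rho,\beta_{j_i}^\vee\rangle}{j_i\in J^-}}\]
When $j_i\in J^+$ we have $w_{i-1}>w_i$, hence $\ell(w_{i-1})-\ell(w_i)\ge1$, so $\delta_i\ge0$ with $\delta_i=0$ iff $\ell(w_{i-1})=\ell(w_i)+1$, i.e.\ iff the reverse step is a (covering) Bruhat edge (\ref{brgr}). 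When $j_i\in J^-$ we have $w_{i-1}<w_i$, and the displayed estimate applied to $w_i,\beta_{j_i}$ gives $\ell(w_{i-1})-\ell(w_i)+2\langle\rho,\beta_{j_i}^\vee\rangle-1\ge0$, so again $\delta_i\ge0$ with $\delta_i=0$ iff $\ell(w_{i-1})=\ell(w_i)-2\langle\rho,\beta_{j_i}^\vee\rangle+1$, i.e.\ iff the reverse step is a quantum edge (\ref{qbrgr}). Since the sign of $\ell(w_{i-1})-\ell(w_i)$ forces a $J^+$ step to be Bruhat and a $J^-$ step to be quantum, in all cases $\delta_i=0$ is equivalent to the reverse step being an edge of the quantum Bruhat graph.

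It remains to sum. In either case $\delta_i=\tfrac12\bigl(\ell(w_{i-1})-\ell(w_i)-1\bigr)+\chi_i$, where $\chi_i:=\langle\rho,\beta_{j_i}^\vee\rangle$ if $j_i\in J^-$ and $\chi_i:=0$ otherwise. Summing over $i=1,\ldots,s=\#J$ and telescoping $\sum_{i=1}^s(\ell(w_{i-1})-\ell(w_i))=\ell(w)-\ell(\endch(w,J))$ gives
\[\sum_{i=1}^s\delta_i=\frac12\bigl(\ell(w)-\ell(\endch(w,J))-\#J\bigr)+\sum_{j\in J^-}\langle\rho,\beta_j^\vee\rangle\,.\]
Comparing with (\ref{defbf}), the quantity defining $\overline{\mathcal F}(\Gamma)$ therefore equals $\sum_{i=1}^s\delta_i+\ell(\endch(w,J))$, the two expressions differing only in the sign of the $\ell(\endch(w,J))$ term. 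As every $\delta_i\ge0$ and $\ell(\endch(w,J))\ge0$, this sum vanishes if and only if $\ell(\endch(w,J))=0$, that is $\endch(w,J)=1$, and every $\delta_i=0$, that is every reverse step is an edge of the quantum Bruhat graph --- which is exactly the asserted equivalence. The one nonroutine ingredient is the reflection-length estimate above; identifying its equality case with a quantum edge is then immediate from the definition (\ref{qbrgr}), and everything else is bookkeeping.
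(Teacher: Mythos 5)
Your decomposition into per-step defects $\delta_i$ plus the boundary term $\ell(\endch(w,J))$ is exactly the paper's proof: the paper rewrites the quantity in (\ref{defbf}) as the same sum of nonnegative terms and concludes that each must vanish. One correction: the reflection-length fact you invoke is an inequality, $\ell(s_\beta)\le 2\langle\rho,\beta^\vee\rangle-1$, not an equality --- it fails, e.g., for $\beta=\varepsilon_1+\varepsilon_2$ in type $C_2$, where $\ell(s_\beta)=3$ while $2\langle\rho,\beta^\vee\rangle-1=5$ --- but since your argument only uses the resulting bound $\ell(ws_\beta)\ge\ell(w)-2\langle\rho,\beta^\vee\rangle+1$, everything goes through unchanged.
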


\begin{proof}
The condition for $(w,J)$ to be in $\overline{{\mathcal F}}(\Gamma)$, cf. (\ref{defbf}), can be rewritten as follows:
\[\sum_{i\::\:j_i\in J^+}\frac{1}{2}\left(\ell(w_{i-1})-\ell(w_i)-1\right)+\sum_{i\::\:j_i\in J^-}\frac{1}{2}\left(\ell(w_{i-1})-\ell(w_i)+2\langle\rho,\beta_{j_i}^\vee\rangle-1\right)+\ell(w_s)=0\,.\]
Note that all terms in this sum are non-negative, which implies that they are equal to 0. This fact immediately translates into the above conditions on $\endch(w,J)$ and $\pi(w,J)$, cf. (\ref{brgr}) and (\ref{qbrgr}).
\end{proof}

\section{The type $A$ setup}\label{secta}

\subsection{The root system} We start with the basic facts about the root system of type $A_{n-1}$. 
We can identify the space $\h_\R^*$ with the quotient $V:=\R^n/\R(1,\ldots,1)$,
where $\R(1,\ldots,1)$ denotes the subspace in $\R^n$ spanned 
by the vector $(1,\ldots,1)$.  
Let $\varepsilon_1,\ldots,\varepsilon_n\in V$ 
be the images of the coordinate vectors in $\R^n$.
The root system is 
$\Phi=\{\alpha_{ij}:=\varepsilon_i-\varepsilon_j \::\: i\ne j,\ 1\leq i,j\leq n\}$.
The simple roots are $\alpha_i=\alpha_{i,i+1}$, 
for $i=1,\ldots,n-1$.
The weight lattice is $\Lambda=\Z^n/\Z(1,\ldots,1)$. The fundamental weights are $\omega_i = \varepsilon_1+\ldots +\varepsilon_i$, 
for $i=1,\ldots,n-1$. 
A dominant weight $\mu=\mu_1\varepsilon_1+\ldots+\mu_{n-1}\varepsilon_{n-1}$ is identified with the partition $(\mu_{1}\geq \mu_{2}\geq \ldots \geq \mu_{n-1}\geq\mu_n=0)$ of length at most $n-1$; more generally, a weight is identified with a composition of length $n$. Note that $\rho=(n-1,n-2,\ldots,0)$. Considering the Young diagram of the dominant weight $\mu$ as a concatenation of columns, whose heights are $\mu_1',\mu_2',\ldots$, corresponds to expressing $\mu$ as $\omega_{\mu_1'}+\omega_{\mu_2'}+\ldots$ (as usual, $\mu'$ is the conjugate partition to $\mu$). We fix a dominant weight  $\mu$, that we use throughout Sections \ref{secta} and \ref{chrev}.

The Weyl group $W$ is the symmetric group $S_n$, which acts on $V$ by permuting the coordinates $\varepsilon_1,\ldots,\varepsilon_n$. Permutations $w\in S_n$ are written in one-line notation $w=w(1)\ldots w(n)$. For simplicity, we use the same notation $(i,j)$ with $1\le i<j\le n$ for the root $\alpha_{ij}$ and the reflection $s_{\alpha_{ij}}$, which is the transposition $t_{ij}$ of $i$ and $j$.  

\subsection{The specialization of the alcove model}\label{seta} We proved in \cite[Corollary 15.4]{lapawg} that, for any $k=1,\ldots,n-1$,
we have the following $\omega_k$-chain, denoted by $\Gamma(k)$:
\begin{equation}\label{omegakchain}\begin{array}{lllll}
(&\!\!\!\!(1,n),&(1,n-1),&\ldots,&(1,k+1)\,,\\
&\!\!\!\!(2,n),&(2,n-1),&\ldots,&(2,k+1)\,,\\
&&&\ldots\\
&\!\!\!\!(k,n),&(k,n-1),&\ldots,&(k,k+1)\,\,)\,.
\end{array}\end{equation}
Hence, we can construct a $\mu$-chain as a concatenation $\Gamma:=\Gamma^{\mu_1}\ldots\Gamma^1$, where $\Gamma^j=\Gamma(\mu'_j)$. This $\mu$-chain is fixed throughout Sections \ref{secta} and \ref{chrev}. Thus, we can replace the notation ${{\mathcal F}}(\Gamma)$ and $\overline{{\mathcal F}}(\Gamma)$ with ${{\mathcal F}}(\mu)$ and $\overline{{\mathcal F}}(\mu)$, respectively.

\begin{example}\label{ex21} {\rm Consider $n=4$ and $\mu =(3,2,1,0)$, for which we have the following $\mu$-chain (the underlined pairs are only relevant in Example \ref{ex21c} below):
\begin{equation}\label{exlchain}\Gamma=\Gamma^3\Gamma^2\Gamma^1=({(1,4)},(1,3),\underline{(1,2)}\:|\:(1,4),{(1,3)},\underline{(2,4)},\underline{(2,3)}\:|\:{(1,4)},\underline{(2,4)},\underline{(3,4)})\,.\end{equation}
Here the splitting of $\Gamma$ into $\Gamma^j$ is shown by bars. In order to visualize this $\mu$-chain, let us represent the Young diagram of $\mu$ inside a broken $3\times 4$ rectangle, with columns from shortest to longest (left to right), as shown below. In this way, a transposition $(i,j)$ in $\Gamma$ can be viewed as swapping entries in the two parts of each column (in rows $i$ and $j$, where the row numbers are also indicated below). 
\begin{equation*}
\begin{array}{l} \tableau{{1}&{1}&{1}\\ &{2}&{2}\\&&{3}}\\ \\
\tableau{{2}\\ {3}&{3}\\ {4}&{4}&{4}} \end{array}
\end{equation*}}
\end{example}

We will use positions in the $\mu$-chain $\Gamma$ and the corresponding roots interchangeably. For instance, we will replace the subsets $J=\{ j_1<\ldots< j_s\}$ in Section \ref{alcovewalks} with the corresponding sequences of roots, viewed as concatenations with distinguished factors $T=T^{\mu_1}\ldots T^1$ induced by the factorization of $\Gamma$ as $\Gamma^{\mu_1}\ldots\Gamma^1$, cf. Example \ref{ex21c}. As a consequence, we will refer to folding pairs as pairs $(w,T)$. 
We denote by $wT^{\mu_1}\ldots T^{j}$ the permutation obtained from $w$ via right multiplication by the transpositions in $T^{\mu_1},\ldots, T^{j}$, considered from left to right. This agrees with the above convention of using pairs to denote both roots and the corresponding reflections. As such, $\endch(w,T)$ can now be written simply $wT$. 

\begin{example}\label{ex21c}{\rm We continue Example \ref{ex21}, by picking the folding pair $(w,J)$ with $w=2134\in S_4$ and $J=\{3,6,7,9,10\}$ (see the underlined positions in (\ref{exlchain})). Thus, we have
\[T=T^3T^2T^1=((1,2)\:|\:(2,4),(2,3)\:|\:(2,4),(3,4))\,.\]
The corresponding chain in Bruhat order $\pi(w,T)$ is the following, where the swapped entries are shown in bold (we represent permutations as broken columns, as discussed in Example \ref{ex21}):
\[w=\begin{array}{l}\tableau{{{\mathbf 2}}} \\ \\ \tableau{{\mathbf 1}\\{3}\\{{4}}} \end{array} \! >\! \begin{array}{l}\tableau{{ 1}} \\ \\ \tableau{{ 2}\\{3}\\{4}} \end{array}\:|\: \begin{array}{l}\tableau{{{ 1}}\\{{\mathbf 2}}} \\ \\ \tableau{{{ 3}}\\{\mathbf 4}} \end{array}\!{<}\! \begin{array}{l}\tableau{{{ 1}}\\{{\mathbf 4}}} \\ \\ \tableau{{{\mathbf 3}}\\{ 2}}\end{array}\!>\!  \begin{array}{l}\tableau{{1}\\{ 3}}\\ \\ \tableau{{ 4}\\{2}}\end{array}\:|\: 
\begin{array}{l}\tableau{{1}\\{{\mathbf 3}}\\{4}}\\ \\ \tableau{{{\mathbf 2}}}\end{array} 
\!>\!   \begin{array}{l} \tableau{{1}\\{ 2}\\{\mathbf 4}} \\ \\ \tableau{{\mathbf 3}}\end{array} \! >\!   \begin{array}{l} \tableau{{1}\\{2}\\{ 3}} \\ \\ \tableau{{ 4}} \end{array} \,.\]
We conclude that $(w,T)$ belongs to $\overline{{\mathcal F}}(\mu)$, and $J^+=\{3,7,9,10\}$, $J^-=\{6\}$. 
}
\end{example}

\subsection{The filling map}\label{setafill} We will now associate with a folding pair $(w,T)$ a {filling} of the Young diagram $\mu$, which is viewed as a concatenation of columns $\sigma=C^{\mu_1}\ldots C^1$ of heights $\ldots,\mu_2',\mu_1'$ (from left to right). The cells in each column are filled with distinct numbers from $1$ to $n$, with no restriction on their order.  

Given a folding pair $(w,T)$, we consider the permutations
\[\pi^j:=wT^{\mu_1}T^{\mu_1-1}\ldots T^{j+1}\,,\]
for $j=0,\ldots,\mu_1$. In particular, $\pi^{\mu_1}=w$ and, if $(w,T)\in \overline{\mathcal F}(\mu)$, then $\pi^0$ is the identity. Given a permutation $u$, we also use the notation $u[i,j]:=u_i\ldots u_j$.

\begin{definition}\label{deffill}
The {\em filling map} is the map $f$ from folding pairs $(w,T)$ in ${\mathcal F}(\mu)$ to fillings $f(w,T)=C^{\mu_1}\ldots C^1$ of the shape $\mu$ defined by
\begin{equation}\label{defswt}C^j:=\pi^j[1,\mu_j']\,,\;\;\;\;\mbox{for $j=1,\ldots,\mu_1$}\,.\end{equation}
\end{definition}

\begin{example}\label{exfill} {\rm Given $(w,T)$ as in Example \ref{ex21c}, we have
\[f(w,T)=\tableau{{2}&{1}&{1}\\&{2}&{3}\\&&{4}}\,.\]}
\end{example}

As usual, we define the content of a filling $\sigma$ as $\content(\sigma):=(c_1,\ldots,c_n)$, where $c_i$ is the number of entries $i$ in the filling. We also let $x^{\content(\sigma)}:=x_1^{c_1}\ldots x_n^{c_n}$. We recall the following result in \cite{lenhlp}[Proposition 3.6].

\begin{proposition}\label{weightmon}\cite{lenhlp} Given an arbitrary folding pair $(w,T)$ in ${\mathcal F}(\mu)$, we have \linebreak $\content(f(w,T))=\weight(w,T)$. In particular, $\weight(w,T)$ only depends on $f(w,T)$. \end{proposition}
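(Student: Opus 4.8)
The plan is to show that both sides equal the same explicit sum over the columns of $\mu$, namely $\sum_{j=1}^{\mu_1}\pi^j(\omega_{\mu'_j})$, where a content $(c_1,\ldots,c_n)$ is identified with the weight $\sum_i c_i\varepsilon_i$. First I would unwind the filling map: since $W=S_n$ permutes the $\varepsilon_i$ and the column $C^j=\pi^j[1,\mu'_j]$ has entries $\pi^j(1),\ldots,\pi^j(\mu'_j)$, the content of the $j$th column is $\sum_{i=1}^{\mu'_j}\varepsilon_{\pi^j(i)}=\pi^j(\omega_{\mu'_j})$, using $\omega_k=\varepsilon_1+\cdots+\varepsilon_k$. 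Summing over the columns gives $\content(f(w,T))=\sum_{j=1}^{\mu_1}\pi^j(\omega_{\mu'_j})$; this step is routine.

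Next I would put $\weight(w,T)$ in closed form. Writing each affine reflection as $\widehat r_{j_i}=t_{l_{j_i}\beta_{j_i}}\circ r_{j_i}$ (a translation composed with the linear reflection $r_{j_i}=s_{\beta_{j_i}}$) and commuting all translations to the left via $g\,t_v=t_{g(v)}\,g$ for linear $g$, I obtain
\[\widehat r_{j_1}\cdots\widehat r_{j_s}(\mu)=r_{j_1}\cdots r_{j_s}(\mu)+\sum_{i=1}^s l_{j_i}\,r_{j_1}\cdots r_{j_{i-1}}(\beta_{j_i}).\]
Applying $w$ and recalling $w_i=wr_{j_1}\cdots r_{j_i}$ together with $w_s=\endch(w,T)=\pi^0$, this becomes $\weight(w,T)=\pi^0(\mu)+\sum_{i=1}^s l_{j_i}w_{i-1}(\beta_{j_i})$. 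Since $\pi^0(\mu)=\sum_j\pi^0(\omega_{\mu'_j})$, it remains to prove the identity
\[\sum_{j=1}^{\mu_1}\bigl(\pi^j(\omega_{\mu'_j})-\pi^0(\omega_{\mu'_j})\bigr)=\sum_{i=1}^s l_{j_i}\,w_{i-1}(\beta_{j_i}).\]

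The crux, and the step I expect to be the main obstacle, is a combinatorial formula for the affine level that turns the right-hand side into a telescoping sum. Using the explicit $\omega_k$-chains (\ref{omegakchain}) and the fact that a positive root $\alpha_{ab}$ occurs (exactly once) in the factor $\Gamma(\mu'_{c'})$ precisely when $\langle\omega_{\mu'_{c'}},\alpha_{ab}^\vee\rangle=[a\le\mu'_{c'}<b]=1$, I would show that if $j_i$ lies in the factor $\Gamma^c$, then $l_{j_i}=\sum_{c'\ge c}\langle\omega_{\mu'_{c'}},\beta_{j_i}^\vee\rangle$; that is, the affine level counts exactly the columns $c'\ge c$ in which $\beta_{j_i}$ reappears. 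Establishing this requires care with the ordering of the factors in $\Gamma=\Gamma^{\mu_1}\cdots\Gamma^1$ and with the convention that the level counts occurrences up to and including the current position.

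Finally I would substitute this formula and interchange the order of summation. Grouping the right-hand side by the column index $c'$, and applying the single-step identity $w_{i-1}(\nu)-w_i(\nu)=\langle\nu,\beta_{j_i}^\vee\rangle\,w_{i-1}(\beta_{j_i})$ (a direct consequence of $w_i=w_{i-1}s_{\beta_{j_i}}$) with $\nu=\omega_{\mu'_{c'}}$, the inner sum over the positions $j_i$ lying in columns $\le c'$ telescopes to $\pi^{c'}(\omega_{\mu'_{c'}})-\pi^0(\omega_{\mu'_{c'}})$; here $\pi^{c'}=w_{m_{c'}}$, where $m_{c'}$ is the number of foldings in columns $>c'$, so that these positions are exactly the steps $m_{c'}<i\le s$. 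Summing over $c'$ recovers the left-hand side, completing the argument.
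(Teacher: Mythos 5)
Your argument is correct: the identification of the column contents with $\pi^j(\omega_{\mu'_j})$, the expansion of the product of affine reflections into a linear part plus a sum of translation terms, the formula $l_{j_i}=\sum_{c'\ge c}\langle\omega_{\mu'_{c'}},\beta_{j_i}^\vee\rangle$ (which is exactly where the specific form of the $\omega_k$-chains (\ref{omegakchain}) enters), and the final telescoping via $w_{i-1}(\nu)-w_i(\nu)=\langle\nu,\beta_{j_i}^\vee\rangle\,w_{i-1}(\beta_{j_i})$ all check out, including the fact that the indices $i$ with $j_i$ in columns $\le c'$ form the contiguous tail $m_{c'}<i\le s$, so the telescoping really does produce $\pi^{c'}(\omega_{\mu'_{c'}})-\pi^0(\omega_{\mu'_{c'}})$. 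Note that the present paper does not prove this proposition at all; it is quoted from \cite{lenhlp}[Proposition 3.6], so there is no in-text proof to compare against. Your write-up is a complete, self-contained verification along the lines one would expect from that reference (tracking how each folding shifts the weight, with the affine level accounting for the repeated occurrences of a root across the factors $\Gamma^{c'}$), and I see no gap in it.
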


\subsection{The quantum Bruhat graph}\label{setaqbr} In this section we present a criterion for the edges of the corresponding quantum Bruhat graph. 
We need the circular order $\prec_i$ on $[n]$ starting at $i$, namely $i\prec_i i+1\prec_i\ldots \prec_i n\prec_i 1\prec_i\ldots\prec_i i-1$. It is convenient to think of this order in terms of the numbers $1,\ldots,n$ arranged on a circle clockwise. We make the convention that, whenever we write $a\prec b\prec c\prec\ldots$, we refer to the circular order $\prec=\prec_a$.

\begin{proposition}\label{crita} We have an edge $w\stackrel{(i,j)}{\longrightarrow} w(i,j)$ if and only if there is no $k$ such that $i<k<j$ and $w(i)\prec w(k)\prec w(j)$.
\end{proposition}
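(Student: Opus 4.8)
The statement characterizes the edges $w \stackrel{(i,j)}{\longrightarrow} w(i,j)$ of the quantum Bruhat graph in type $A_{n-1}$, where such an edge is either a Bruhat cover \eqref{brgr} or a quantum edge \eqref{qbrgr}. My plan is to reduce both the Bruhat and the quantum edge conditions to a single uniform combinatorial statement, using the explicit type $A$ data: $\langle\rho,\alpha_{ij}^\vee\rangle = j-i$ for $1\le i<j\le n$ (since $\rho=(n-1,n-2,\ldots,0)$ and $\alpha_{ij}^\vee=\varepsilon_i-\varepsilon_j$). First I would recall that, writing $t_{ij}$ for the transposition, right multiplication $w \mapsto w(i,j)$ swaps the values $w(i)$ and $w(j)$ in one-line notation. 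The length difference is governed by the classical formula $\ell(w(i,j)) - \ell(w) = \mathrm{sign}(w(j)-w(i))\bigl(1 + 2\cdot\#\{k : i<k<j,\ w(k)\text{ strictly between }w(i)\text{ and }w(j)\}\bigr)$, which I would state and use as the key counting ingredient.

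\textbf{Key steps.} I would split into the two cases. For a Bruhat cover, \eqref{brgr} requires $\ell(w(i,j))=\ell(w)+1$, which by the length formula forces $w(i)<w(j)$ (so the value swap is an ascent) and that no intermediate position $k$ with $i<k<j$ has $w(i)<w(k)<w(j)$. In the circular order $\prec_{w(i)}$ starting at $w(i)$, the ordinary inequality $w(i)<w(k)<w(j)$ with $w(i)<w(j)$ is exactly the condition $w(i)\prec w(k)\prec w(j)$, so the Bruhat covers contribute precisely those edges with $w(i)\prec w(j)$ and no forbidden $k$. For a quantum edge, \eqref{qbrgr} requires $\ell(w(i,j)) = \ell(w) - 2(j-i) + 1$. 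Here I would show the length must decrease, so $w(i)>w(j)$, and then the length formula gives $\ell(w)-\ell(w(i,j)) = 1 + 2\,\#\{k: i<k<j,\ w(j)<w(k)<w(i)\}$. Setting this equal to $2(j-i)-1$ yields $\#\{k : i<k<j,\ w(j)<w(k)<w(i)\} = j-i-1$, i.e. \emph{every} intermediate position $k$ must satisfy $w(j)<w(k)<w(i)$. I would then check that under $w(i)>w(j)$, the circular condition $w(i)\prec w(k)\prec w(j)$ (with $\prec=\prec_{w(i)}$) fails precisely when $w(j)<w(k)<w(i)$ in the usual order, so requiring all intermediate $k$ to lie strictly between $w(j)$ and $w(i)$ is exactly ``there is no $k$ with $w(i)\prec w(k)\prec w(j)$.''

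\textbf{Synthesis.} The two cases combine cleanly: in both, the edge exists iff there is no intermediate $k$ violating the circular-order condition, and the sign of $w(j)-w(i)$ automatically selects Bruhat ($w(i)\prec w(j)$) versus quantum ($w(j)\prec w(i)$ within $\prec_{w(i)}$). Thus the single criterion ``no $k$ with $i<k<j$ and $w(i)\prec w(k)\prec w(j)$'' captures exactly the union of Bruhat and quantum edges. I would close by noting that the circular order elegantly unifies the two inequality patterns ($w(i)<w(k)<w(j)$ for ascents, $w(j)<w(k)<w(i)$ for descents) into one statement, which is why this formulation is natural.

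\textbf{Main obstacle.} The delicate part is the bookkeeping with the circular order and the sign conventions: one must verify carefully that, in both the ascent and descent cases, the negation of $w(i)\prec_{w(i)} w(k)\prec_{w(i)} w(j)$ correctly excludes exactly the positions counted by the length formula. A secondary subtlety is confirming that the quantum condition $\ell(w(i,j))=\ell(w)-2(j-i)+1$ cannot be accidentally satisfied by a length-increasing swap, which I would rule out using non-negativity of the intermediate count together with the identity $j-i-1 \geq \#\{k : i<k<j\}$. These are routine once the length formula and the $\langle\rho,\alpha_{ij}^\vee\rangle=j-i$ computation are in place, but they require attention to avoid off-by-one and orientation errors.
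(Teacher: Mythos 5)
Your proposal is correct and follows essentially the same route as the paper: both rest on the length formula $\ell(w(i,j))-\ell(w)=2N_{ab}(w[i,j])+1$ (for $a:=w(i)<b:=w(j)$, with $N_{ab}$ counting intermediate values strictly between $a$ and $b$) together with $\langle\rho,\alpha_{ij}^\vee\rangle=j-i$, and then observe that the circular order $\prec_{w(i)}$ unifies the ascent condition $w(i)<w(k)<w(j)$ with the descent condition's complement $w(j)<w(k)<w(i)$. The paper states only the length formula and declares the criterion immediate, so your explicit two-case verification simply fills in the details the paper leaves to the reader.
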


\begin{proof}
Given a permutation $w$ in $S_n$ and $1\le i<j\le n$ such that $a:=w(i)<b:=w(j)$, the realization of the length in terms of inversions implies
\[\ell(w(i,j))-\ell(w)=2N_{ab}(w[i,j])+1\,;\] 
here $N_{ab}(u)$ denotes the number of letters of the word $u$ which are strictly between $a$ and $b$. The criterion follows immediately from this fact. 
\end{proof}

\begin{remark}\label{across} If we apply to $w$ a transposition $(i,j)$ such that $i<k<j$ and $w(i)\prec w(k)\prec w(j)$ for some $k$, we say that we transpose the values $w(i)$ and $w(j)$ across $w(k)$. 
\end{remark}

\section{The type $A$ charge revisited}\label{chrev}

\subsection{The main construction}\label{constra} Let $B_\mu$ denote the set of column-strict fillings of the shape $\mu$ with integers in $[n]$. We  write
\[B_\mu=\bigotimes_{i=\mu_1}^{1} B^{\mu_i',1}\,;\]
here $B^{k,1}$ is the traditional notation for the type $A_{n-1}$ {\em Kirillov-Reshetikhin crystal} \cite{karrym} indexed by a column of height $k$, whose vertices are indexed by increasing fillings of the mentioned column with integers in $[n]$. 

Consider the following composite map:
\begin{equation}\label{compa}\overline{\mathcal F}(\mu)\stackrel{f}{\longrightarrow} f(\overline{\mathcal F}(\mu))\stackrel{{\rm ord}}{\longrightarrow} B_\mu\,,\end{equation}
where $f$ is the {filling map} in Definition \ref{deffill}, and the map ``${\rm ord}$'' is sorting each column increasingly. The following is the main result in type $A$, and this section is devoted to its proof.

\begin{theorem}\label{bija} The composite ${\rm ord}\circ f$ is a bijection between $\overline{\mathcal F}(\mu)$ and $B_\mu$. \end{theorem}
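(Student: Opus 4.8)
The plan is to prove the statement by constructing an explicit inverse to $\mathrm{ord}\circ f$ and checking that the two maps compose to the identity in both directions. First I would record well-definedness of the forward map: for any folding pair, the $j$th column $C^j=\pi^j[1,\mu_j']$ is the list of the first $\mu_j'$ values of a permutation, hence consists of $\mu_j'$ distinct elements of $[n]$, so sorting it yields a strictly increasing column and $\mathrm{ord}\circ f$ indeed maps into $B_\mu$. No condition relating adjacent columns is needed, since $B_\mu$ is a tensor product of single columns. The essential reformulation is Proposition~\ref{condqbg}: an element of $\overline{\mathcal F}(\mu)$ is the same datum as a sequence $1=\pi^0,\pi^1,\ldots,\pi^{\mu_1}=w$ in which, for each $j$, the transition $\pi^{j-1}=\pi^jT^j$ uses transpositions from the factor $\Gamma(\mu_j')$ and traces a path in the quantum Bruhat graph from $\pi^{j-1}$ to $\pi^j$. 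Thus the whole theorem reduces to reconstructing this permutation sequence from the increasing columns $D^{\mu_1}\ldots D^1$ of a given filling.

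The core of the proof is a single-factor reconstruction step, carried out inductively for $j=1,\ldots,\mu_1$ (equivalently, building the sequence upward from $\pi^0=1$). Suppose $\pi^{j-1}$ has already been determined; fix the height $k=\mu_j'$ and the set $S$ of entries of $D^j$. I would show that there is a \emph{unique} choice of $T^j\subseteq\Gamma(k)$ whose associated steps form a valid quantum Bruhat graph path out of $\pi^{j-1}$ and whose endpoint $\pi^j$ satisfies $\{\pi^j(1),\ldots,\pi^j(k)\}=S$, and that this $\pi^j$ is then completely forced (both its top $k$ entries and, crucially, its tail). The engine here is the edge criterion of Proposition~\ref{crita}: each transposition of $\Gamma(k)$ exchanges a value in a top position with a value in a bottom position, and the ``transposing across'' obstruction of Remark~\ref{across} pins down exactly which exchanges are legal edges. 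Running this for all $j$ produces a candidate $g(\sigma)=(w,T)$; I would then verify that $\mathrm{ord}\circ f$ and $g$ are mutually inverse, where $g\circ(\mathrm{ord}\circ f)=\mathrm{id}$ follows from the uniqueness in the reconstruction step and $(\mathrm{ord}\circ f)\circ g=\mathrm{id}$ from the fact that $g$ restores, by construction, the sorted columns $D^j$.

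The hard part will be the single-factor reconstruction lemma, and specifically establishing both existence and uniqueness of the greedy ``un-sorting'' of each column. Two issues make this delicate. First, one must prove that the canonical candidate for $T^j$ always produces genuine quantum-Bruhat-graph edges --- never an illegal ``transpose across'' a blocking value --- for every target set $S$, which is what yields surjectivity onto $B_\mu$. Second, and more subtly, the tail of $\pi^j$ is invisible in the sorted column $D^j$, yet it is exactly this full permutation (not merely its top set) that propagates into the factor $\Gamma(\mu_{j+1}')$; so controlling the interaction between an already-built column and the next transposition chain is the genuinely technical point. This is presumably why two separate results are isolated beforehand, in Sections~\ref{coll} and~\ref{colr}: one governing the internal structure of a single column, the other the compatibility of consecutive columns. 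Granting those, the bijection follows from the inductive bookkeeping above.
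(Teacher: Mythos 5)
Your proposal follows essentially the same route as the paper: the paper factors your single-column reconstruction step into Remark \ref{remcond} (Condition \ref{cond2} forces the order of the entries within each column, given the sorted columns) and Proposition \ref{chaina} via Lemma \ref{chconst} (the greedy Algorithm \ref{algchain}, driven by the edge criterion of Proposition \ref{crita} and the ``transposing across'' obstruction, forces the unique quantum Bruhat path position by position), which together give exactly the existence and uniqueness you defer to. One small correction: the two auxiliary results in Sections \ref{coll} and \ref{colr} (Propositions \ref{proplc} and \ref{proprc}) belong to the type $C$ argument for Theorem \ref{bijc}; for the type $A$ Theorem \ref{bija} the only technical input is Lemma \ref{chconst}, stated and proved within Section \ref{chrev} itself.
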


\begin{remarks}\label{remfilla} (1) In a later publication, we will show how to define the crystal operators on $\overline{\mathcal F}(\mu)$ such that the above bijection is a crystal isomorphism. This definition will generalize the one in the alcove model \cite{lapcmc}, and will have several applications.

(2) Let us restrict the map $f$ to those $(w,J)$ in $\overline{\mathcal F}(\mu)$ for which the path associated to $\pi(w,J)$, cf. Proposition \ref{condqbg}, is a path in the Bruhat graph. The increasing length condition on the path ensures that the rows of $f(w,J)$ are weakly increasing (from right to left, in our display); moreover, the saturated chain condition ensures that the columns of $f(w,J)$ are strictly increasing. Thus, the mentioned restriction of $f$ is a bijection to semistandard Young tableaux of shape $\mu$ with entries in $[n]$; these index the vertices of the crystal $B(\mu)$ of highest weight $\mu$, which is embedded in $B_\mu$. 
\end{remarks}

Consider the following two conditions on a pair of adjacent columns $C'C$ (cf. the notation in Section \ref{setafill}) in a filling of a Young diagram. 

\begin{condition}\label{cond1} For any pair of indices $1\le i<l\le \#C'$, both statements below are false: 
\begin{equation}\label{cond12}C(i)=C'(l)\,,\;\;\;\;\;\;\;\;C(i)\prec C'(l)\prec C'(i)\,.\end{equation}
\end{condition}

\begin{condition}\label{cond2}  For every index $1\le i\le \#C'$, we have
\[C'(i)=\min\,\{C'(l)\::\:i\le l\le\#C'\}\,,\]
where the minimum is taken with respect to the circular order $\prec_{C(i)}$ on $[n]$ starting at $C(i)$.
\end{condition}

\begin{remark}\label{remcond} It is clear that the two conditions are equivalent. We will show in the proof of Lemma \ref{chconst} that they are closely related to the quantum Bruhat graph. It is straightforward that, given any filling $\tau$ in $B_\mu$, there is a unique filling $\sigma$ satisfying the following conditions: (i) the first column of $\sigma$ (of height $\mu_1'$) is increasing, (ii) the adjacent columns of $\sigma$ satisfy Condition \ref{cond1}, and (iii) ${\rm ord}(\sigma)=\tau$. Indeed, the columns of $\sigma$ can be constructed from longest to shortest (right to left, cf. the notation in Section \ref{setafill}) and from top to bottom by repeatedly using Condition \ref{cond2}. This algorithm can be traced back (in a row version, as opposed to the present column version) to \cite{hhlcfm}[Section 7], and is illustrated by Example \ref{exrinv} below. Finally, note that if $C^0=12\ldots n$, then Condition \ref{cond2} on $CC^0$ is equivalent to $C$ being increasing. This fact is used in several instances below, where we consider the augmentation of a filling $\sigma=\ldots C^2C^1$ to the filling $\widehat{\sigma}=\ldots C^2C^1C^0$. 
\end{remark}

\begin{example}\label{exrinv}{\rm 
The algorithm described in Remark {\rm \ref{remcond}} easily leads from the filling $\tau$ below to $\sigma$. The bold entries in $\sigma$ are only relevant in Example {\rm \ref{exch}} below.
\begin{equation}\label{st}\tau=\tableau{{2}&{1}&{2}&{3}\\&{2}&{3}&{5}\\&{4}&{4}&{6}}\,,\;\;\;\;\;\sigma=\tableau{{2}&{\mathbf 4}&{3}&{3}\\&{2}&{2}&{\mathbf 5}\\&{1}&{\mathbf 4}&{\mathbf 6}}\,.\end{equation}
}
\end{example}

The proof of Theorem \ref{bija} consists essentially of the following result.

\begin{proposition}\label{chaina}
The restriction of the filling map $f$ to $\overline{\mathcal F}(\mu)$ is injective. The image of this restriction consists of all fillings of $\mu$ with integers in $[n]$ whose first column is increasing and whose adjacent columns satisfy Condition {\rm \ref{cond1}}. 
\end{proposition}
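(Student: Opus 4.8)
The plan is to prove injectivity and the image description at one stroke, by exhibiting an explicit inverse that reads the filling column by column. By Proposition~\ref{condqbg}, the condition $(w,T)\in\overline{\mathcal F}(\mu)$ amounts to $\pi^0=1$ together with the requirement that $\pi(w,T)$, which passes through the permutations $\pi^{\mu_1},\ldots,\pi^0$, be a path in the quantum Bruhat graph. Since $f$ records only $C^j=\pi^j[1,\mu_j']$, and since each factor $T^j\subseteq\Gamma(\mu_j')$ moves values solely between the top $\mu_j'$ positions and the bottom, both the path condition and the filling split into local data attached to consecutive pairs $(\pi^j,\pi^{j-1})$. I would therefore reduce the whole statement to a single \emph{column step}: the passage from $\pi^{j-1}$ to $\pi^j$, which appends the column $C^j$ to the left of $C^{j-1}$.

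The heart of the proof is a lemma (this is what I would isolate as Lemma~\ref{chconst}) describing, for a fixed permutation $v:=\pi^{j-1}$, exactly which subsequences of $\Gamma(\mu_j')$ produce a quantum Bruhat graph segment, and what the resulting column is. The key structural observation, using the explicit order of $\Gamma(k)$ with $k=\mu_j'$ (row index $a=1,\ldots,k$ outermost, column index $b=n,\ldots,k+1$ decreasing), is that a top position $a$ is untouched before, and frozen after, the block of row-$a$ transpositions; hence the entry in row $a$ travels from $C^j(a)$ to $C^{j-1}(a)$ exactly during that block. Feeding each individual swap into the edge criterion of Proposition~\ref{crita} and analyzing a single block, I expect to find that a valid segment exists and is unique precisely when the pair $C^jC^{j-1}$ (in the role of $C'C$) satisfies Condition~\ref{cond2}, equivalently Condition~\ref{cond1}; furthermore the segment recovers the entire permutation $\pi^j$, not merely its top part, which is what lets the induction proceed.

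Granting this lemma, the inverse is assembled exactly as in the algorithm of Remark~\ref{remcond}. Starting from $\pi^0=1$ and a filling $\sigma$ whose first column is increasing and whose adjacent columns satisfy Condition~\ref{cond1}, I build $\pi^1,\pi^2,\ldots,\pi^{\mu_1}$ (hence $w$ and every $T^j$) one step at a time: at step $j$ the lemma yields the unique $T^j$ and the unique $\pi^j$ with $\pi^j[1,\mu_j']=C^j$, using the already-reconstructed full permutation $\pi^{j-1}$. The boundary case $C^0=12\ldots n$ is exactly the first-column hypothesis, since Condition~\ref{cond2} on $C^1C^0$ is equivalent to $C^1$ being increasing. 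By construction the resulting chain is a quantum Bruhat graph path terminating at the identity, so $(w,T)\in\overline{\mathcal F}(\mu)$ and $f(w,T)=\sigma$; the step-by-step uniqueness simultaneously gives injectivity and shows this map is a two-sided inverse to $f$, while the forward half of the lemma confirms that $f(\overline{\mathcal F}(\mu))$ lies in the claimed set.

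The main obstacle is, unsurprisingly, the column-step lemma itself. Two points make it delicate: matching the circular-order minimality in Condition~\ref{cond2} against the precise bookkeeping of which swaps are Bruhat edges and which are quantum edges along a block of $\Gamma(k)$; and the fact that the two adjacent columns have different heights $\mu_j'\le\mu_{j-1}'$, so the intermediate positions $\mu_j'+1,\ldots,\mu_{j-1}'$ must be tracked through the segment. I would expect to need one or two auxiliary results about the behaviour of a single column under such a chain—presumably the content of Sections~\ref{coll} and~\ref{colr}—before the column step can be proved cleanly.
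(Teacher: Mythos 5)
Your plan is essentially the paper's own proof: the column-step analysis you isolate, reduced to a single row block of transpositions $(i,m_1),\ldots,(i,m_p)$ fed through the edge criterion of Proposition~\ref{crita}, is exactly Lemma~\ref{chconst} (proved via the greedy procedure \emph{path-A}), and your assembly of the inverse starting from $\pi^0=1$ with the augmented column $C^0=12\ldots n$ handling the first-column hypothesis is precisely how the paper deduces Proposition~\ref{chaina}. The only slight miscalibration is that no machinery from Sections~\ref{coll} and~\ref{colr} is needed here (those sections serve the type $C$ analogue); in type $A$ the single Lemma~\ref{chconst} suffices because each row block only ever moves the entry in position $i$ among the top $k$ positions.
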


The proof of Proposition \ref{chaina} is based on the following result.

\begin{lemma}\label{chconst} {\rm (1)} Let $u$ be a permutation in $S_n$, and $C$ a filling of a column of height $k$ with distinct entries in $[n]$, such that $C[i+1,k]=u[i+1,k]$ for a fixed $i\le k$. Assume that there is a sequence $k<m_1<\ldots<m_p\le n$ such that we have a path in the quantum Bruhat graph starting at $u$ with edges labeled $(i,m_1),\ldots,(i,m_p)$, in this order, where $u(m_p)=C(i)$.  Then the following two conditions are satisfied: $C(i)\ne u(l)$ for $l<i$, and $u(i)\prec C(l)\prec C(i)$ fails for $i<l\le k$.  

{\rm (2)} Viceversa, if $u(i)\ne C(i)$, the two conditions above imply the existence of a unique sequence $(m_1,\ldots,m_p)$ with the above quantum Bruhat graph property. Moreover, we have
\begin{equation}\label{monota}
u(i)\prec u(m_1)\prec\ldots\prec u(m_p)=C(i)\,.
\end{equation}
\end{lemma}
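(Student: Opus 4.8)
The statement to prove is Lemma~\ref{chconst}, which characterizes, in terms of two simple combinatorial conditions on a column filling $C$ and a permutation $u$, the existence of a distinguished sequence of quantum Bruhat graph edges $(i,m_1),\ldots,(i,m_p)$ carrying $u(i)$ to the target value $C(i)$ while fixing the tail $u[i+1,k]=C[i+1,k]$. My plan is to reduce everything to the edge criterion of Proposition~\ref{crita}, since each edge in the relevant path is a transposition in the fixed position~$i$, and then to build the sequence $(m_1,\ldots,m_p)$ greedily so as to make the monotonicity chain~(\ref{monota}) transparent.

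\textbf{Part (1): necessity.}
First I would unpack what each edge $(i,m_t)$ does. Since all these transpositions act at position $i$ and the positions $m_1,\ldots,m_p>k\ge i$ lie outside the tail $[i+1,k]$, applying the transpositions preserves the entries in positions $i+1,\ldots,k$; this is what guarantees the tail condition is consistent throughout and, in particular, that it still holds for the final permutation. Let $u^{(0)}:=u$ and $u^{(t)}:=u^{(t-1)}(i,m_t)$, so $u^{(t)}(i)=u^{(t-1)}(m_t)$ and $u^{(p)}(i)=u(m_p)=C(i)$. The key point is that each edge is a \emph{directed} edge of the quantum Bruhat graph out of $u^{(t-1)}$, so by Proposition~\ref{crita} (together with the description of quantum versus Bruhat edges) there is no obstruction of the stated ``across'' type; I would extract from this that $u^{(t-1)}(i)\prec u^{(t-1)}(m_t)$ in the circular order starting at $u^{(t-1)}(i)$, and moreover that the intermediate values traversed are monotone in the sense of~(\ref{monota}). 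From the chain $u(i)=u^{(0)}(i)\prec u^{(1)}(i)\prec\cdots\prec u^{(p)}(i)=C(i)$ I would then read off the two conditions: the condition $C(i)\neq u(l)$ for $l<i$ follows because the value $C(i)=u(m_p)$ originally sits in a position $m_p>k\ge i$, hence not among the first $i-1$ positions (the tail positions are fixed and the value moves only through position~$i$ and the $m_t$'s); and the failure of $u(i)\prec C(l)\prec C(i)$ for $i<l\le k$ follows because each $C(l)=u(l)$ in the tail is never crossed by the moving value (Proposition~\ref{crita} forbids precisely such a crossing at each edge), so no tail value can lie strictly between $u(i)$ and $C(i)$ in the circular order.

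\textbf{Part (2): sufficiency and uniqueness.}
Conversely, assuming $u(i)\neq C(i)$ and the two conditions, I would construct $(m_1,\ldots,m_p)$ explicitly and greedily. Starting from $u(i)$, I would let $u(m_1)$ be the value immediately following $u(i)$ in the circular order $\prec_{u(i)}$ among the values available in positions $>k$ (equivalently, the first value the edge criterion allows us to jump to without crossing a tail entry), then iterate, each time choosing the next value in the circular order, until reaching $C(i)$; this is forced to terminate at $C(i)$ precisely because the two conditions guarantee that $C(i)$ is reachable and that no tail value $C(l)$ lies in the open circular arc from $u(i)$ to $C(i)$, so the greedy walk never gets blocked before $C(i)$ and never overshoots it. At each step the edge criterion of Proposition~\ref{crita} is verified by construction (we always step to the circularly-next available value, so there is no $k'$ strictly between source and target in position and value), giving a genuine quantum Bruhat graph edge. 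The resulting values satisfy~(\ref{monota}) by construction. Uniqueness follows from the same greedy determinism: any valid sequence must step through the values in increasing circular order (an edge cannot skip over an intermediate admissible value without violating the no-crossing criterion), so the sequence of values, and hence the positions $m_t$ which hold those values and do not move, is uniquely determined.

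\textbf{Main obstacle.}
The delicate point I expect to fight with is the bookkeeping that reconciles the two distinct circular orders appearing here: the global circular order used in the edge criterion of Proposition~\ref{crita} at each step (based at the current value $u^{(t-1)}(i)$) versus the single fixed circular order $\prec_{u(i)}$ in which~(\ref{monota}) is asserted. I would need to verify carefully that a forward step in each local order translates into genuine forward progress in the fixed order $\prec_{u(i)}$, i.e. that the walk cannot wrap around the circle past its starting value; the failure of the condition $u(i)\prec C(l)\prec C(i)$ for tail entries is exactly what rules this out, and making that implication airtight — in both directions — is the technical heart of the argument.
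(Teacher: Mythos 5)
Your Part (1) is essentially the paper's argument and is fine: the first condition is immediate because $C(i)=u(m_p)$ sits at a position $m_p>k$, and the second holds because a tail entry lying strictly between $u(i)$ and $C(i)$ in the circular order would have to be crossed by one of the edges, contradicting Proposition~\ref{crita}. The genuine gap is in Part (2). Your greedy construction selects at each step the position holding the \emph{circularly next available value}, but the lemma requires $k<m_1<\ldots<m_p\le n$ (the edge labels must be a subsequence of the $i$th row of the $\mu$-chain, read in this order), and value order need not agree with position order. Concretely, take $i=1$, $k=2$, $u=1\,6\,4\,2\,5\,3\,7\,8$ in $S_8$, $C=(5,6)$; the hypotheses of Part (2) hold. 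Your rule first jumps to the value $2$ at position $4$; but then the value $4$ is parked at position $3<4$ and can never be used again, and the edge $(1,5)$ from the value $2$ to the value $5$ is not in the quantum Bruhat graph because it transposes across the $4$ sitting at position $3$ --- so your walk gets stuck. The unique valid path is $1\to 4\to 5$ via positions $(3,5)$: it \emph{skips} the value $2$. This also refutes your parenthetical claim that the circularly next available value is ``the first value the edge criterion allows us to jump to'': the criterion is blocked not only by tail entries at positions $i+1,\ldots,k$ but also by entries at the intermediate positions $k+1,\ldots,m_t-1$. And it breaks your uniqueness argument, which rests on the false premise that any valid sequence must step through \emph{all} available values in increasing circular order.

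The correct construction (the paper's Algorithm~\ref{algchain}) is greedy \emph{by position}: scan $m=k+1,k+2,\ldots,n$ in order and transpose whenever the value at position $m$ lies strictly between the current value at position $i$ and the target $C(i)$ in the circular order. This automatically produces increasing positions and the monotone value chain (\ref{monota}), and uniqueness is then argued position by position: if a valid sequence had $m_1'>m_1$ (resp.\ $m_1'<m_1$), then some later edge would have to transpose the entry in position $i$ across the value $u(m_1)$ (resp.\ across $u(i)$, which now sits at position $m_1'$), violating the quantum Bruhat graph criterion; hence $m_1'=m_1$, and one iterates.
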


We claim that the construction of the sequence $S=((i,m_1),\ldots,(i,m_p))$ in Lemma \ref{chconst} (2) is given by the following greedy algorithm. The procedure {\em path-A} is called with the parameters $u$, $i$, and $c=C(i)$, while $L$ is the list of positions $(k+1,\ldots,n)$. The function {\em next(m,L)} determines the succesor of the element $m$ in the list $L$. Note that the algorithm works even when $u(i)= C(i)$, which will be needed later.

\begin{algorithm}\label{algchain}\hfill \\
procedure path-A$(u,i,c,L)$;\\
if $u(i)=c$ then return $\emptyset,u$\\
else\\
\indent let $S:=\emptyset$, $m:=L(1)$, $v:=u$;\\
\indent while $v(m)\ne c$ do \\
\indent \indent if $v(i)\prec v(m)\prec c$ then let $S:=S,(i,m)$, $v:=v(i,m)$;\\
\indent \indent  end if;\\
\indent \indent  let $m:=next(m,L)$;\\
\indent end while;\\
\indent let $S:=S,(i,m)$, $v:=v(i,m)$;\\
\indent return $(S,v)$;\\
end if;\\
end.
\end{algorithm}

\begin{proof}[Proof of Lemma {\rm \ref{chconst}}] (1) The condition $C(i)\ne u(l)$ for $l<i$ is trivial. On the other hand, if $u[i,k]$ contains an entry $a$ with $u(i)\prec a\prec C(i)$, then one of the edges of the considered path in the quantum Bruhat graph would violate the criterion in Proposition \ref{crita}.

(2) We prove the above claim related to Algorithm \ref{algchain}. The fact that $C(i)\ne u(l)$ for $l\le i$ implies that $C(i)$ appears in a position $q>k$ in $u$. This ensures that the procedure {\em path-A$(u,i,C(i),(k+1,\ldots,n))$} terminates (correctly). This procedure constructs a sequence $k<m_1<\ldots<m_p=q\le n$ such that (\ref{monota}) holds and, for each $l=1,\ldots,p-1$, the sequence $u[m_l,m_{l+1}-1]$ contains no entry $a$ with $u(m_l)\prec a\prec C(i)$. In fact, the latter condition also holds for $l=0$ if we set $m_0:=i$, by the hypothesis.  Thus the output of the procedure has the desired quantum Bruhat graph property, by the corresponding criterion. On the other hand, no other sequence $k<m_1'<\ldots<m_r'=q$ has the same property. Indeed, if $m_1'>m_1$ (respectively $m_1'<m_1$), then at some point in the process of applying the transpositions $(i,m_l')$ to $u$, the entry in position $i$ has to change from a value $a$ to a value $b$ across the value $u(m_1)$ (respectively $u(i)$), see Remark \ref{across}; but this violates the quantum Bruhat graph criterion. Therefore $m_1=m_1'$, and the reasoning continues in the same way. 
\end{proof}

\begin{proof}[Proof of Proposition {\rm \ref{chaina}}] We will implicitly use the part of Remark \ref{remcond} related to an augmented filling of the shape $\mu$. By Lemma \ref{chconst} (1), any filling in the image of $\overline{\mathcal F}(\mu)$ under $f$ has the stated properties. On the other hand, given a filling $\sigma=C^{\mu_1}\ldots C^1$ with the stated properties, we can show that there is a unique folding pair $(w,T)$ mapped to it, based on Lemma \ref{chconst} (2). More precisely, the reverse of the sequence $T$ is determined by succesive calls of the procedure {\em path-A$(u,i,c,L)$}. Here $c=C^j(i)$ and $L=(\#C^j+1,\ldots,n)$, where $j$ ranges from $1$ to $\mu_1$, while for each $j$ the index $i$ ranges from $\#C^j$ down to $1$; the permutation $u$ starts by being the identity, and then is set to the permutation $v$ returned by the previous call of the procedure. Finally, $T$ is set to the reverse of the (left to right) concatenation of the outputs $S$ in the successive calls of the procedure, while $w$ is set to permutation returned by the last call. The above algorithm reconstructs, for instance, the path in the quantum Bruhat graph in Example \ref{ex21c} from the filling in Example \ref{exfill}.
\end{proof}

\begin{proof}[Proof of Theorem {\rm \ref{bija}}] This is now immediate, based on Remark \ref{remcond} and Proposition \ref{chaina}.
\end{proof}

\subsection{Rederiving the classical charge}\label{redcha} We start by recalling the construction of the classical charge of a word due to Lascoux and Sch\"utzenberger \cite{lassuc}. Assume that $w$ is a word with letters in the alphabet $[n]$ which has partition content, i.e., the number of $j$'s is at least the number of $j+1$'s, for each $j=1,\ldots,n-1$. The statistic $\charge(w)$ is calculated as a sum based on the following algorithm. Scan the  word starting from its right end, and select the numbers $1,2,\ldots$ in this order, up to the largest possible $k$. We always pick the first available entry $j+1$ to the left of the previous entry $j$. Whenever there is no such entry, we pick the rightmost entry $j+1$, so we start scanning the word from its right end once again; in this case, we also add $k-j$ to the sum that computes $\charge(w)$. At the end of this process, we remove the selected numbers and repeat the whole procedure until the word becomes empty. 

\begin{example}\label{lsex}{\rm Consider the word $w=11{\mathbf 3}2\mathbf{214}323$, where the first group of selected numbers is shown in bold. The corresponding contribution to the charge is $1$. After removing the bold numbers and another round of selections (again shown in bold), we have $1{\mathbf 1}2\mathbf{32}3$, so the contribution to the charge is $2$. We are left with the word $123$, whose contribution to the charge is $2+1=3$. So $\charge(w)=1+2+(2+1)=6$. }
\end{example}

We now reinterpret the charge from the point of view of the Ram-Yip formula (\ref{ryform}) and Theorem \ref{bija}. Given a filling $\tau$ in $B_\mu$ (i.e., a column-strict filling of $\mu$), we define its {\em charge word} as the biword $\cw(\tau)$ containing a biletter $\binom{k}{j}$ for each entry $k$ in column $j$ of $\tau$ (the columns are numbered as usual, from longest to shortest). We order the biletters in the decreasing order of the $k$'s, and for equal $k$'s, in the decreasing order of $j$'s. The obtained word formed by the lower letters $j$ will be denoted by $\cw_2(\tau)$. We refer to Example \ref{exch} for an illustration of the charge word.

Given an arbitrary filling $\sigma=C^{\mu_1}\ldots C^1$, we say that the cell in column $C^j$ and row $i$ is a descent if $C^j(i)>C^{j+1}(i)$, assuming that $C^{j+1}(i)$ is defined. Let ${\rm Des}(\sigma)$ denote the set of descents in $\sigma$. As usual, we define the arm length ${\rm arm}(c)$ of a cell $c$ as the number of cells to its left (columns are arranged from shortest to longest, left to right, as in Section \ref{setafill}).

Recall the bijections in (\ref{compa}), and compose also with the map $\cw_2$, obtaining
\begin{equation}\label{mapa}(w,T)\mapsto \sigma=f(w,T)\mapsto \tau={\rm ord}(\sigma)\mapsto \cw_2(\tau)\,.\end{equation}
We will now show that the algorithm in Remark \ref{remcond} for reconstructing $\sigma$ from $\tau$ translates, in the setup of the charge word, precisely into the selection algorithm which computes the charge of $\cw_2(\tau)$ (see the second part of the proof of Theorem \ref{levch} below). Since the former algorithm is closely related to the quantum Bruhat graph, as we discussed in Section \ref{constra}, we can conclude that this graph explains the charge construction itself. Furthermore, we have the following translation of statistics via the maps in (\ref{mapa}).

\begin{theorem}\label{levch} We have
\[\level(w,T)=\sum_{c\in{\rm Des}(\sigma)}{\rm arm}(c)=\charge(\cw_2(\tau))\,.\]
\end{theorem}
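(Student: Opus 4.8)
The plan is to prove the two equalities separately, treating the middle expression $\sum_{c\in{\rm Des}(\sigma)}{\rm arm}(c)$ as a bridge between the alcove-walk statistic $\level(w,T)$ on the left and the classical charge on the right. First I would establish $\level(w,T)=\sum_{c\in{\rm Des}(\sigma)}{\rm arm}(c)$, which is essentially a bookkeeping identity between the $\mu$-chain data and the combinatorics of the filling. Recall that $\level(w,T)=\sum_{j\in J^-}l_j$, where $l_j$ is the affine level (the number of repetitions of the root $\beta_j$ up to position $j$ in $\Gamma$). Using the explicit description of the $\mu$-chain as the concatenation $\Gamma^{\mu_1}\ldots\Gamma^1$ with each $\Gamma^k=\Gamma(\mu'_k)$ given in \eqref{omegakchain}, I would compute $l_j$ directly in terms of the position of the root $(i,j')$ within the chain. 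The key observation is that a negative folding in $J^-$ corresponds, under the filling map $f$, precisely to a descent of $\sigma$: by Proposition \ref{condqbg}, the negative foldings are exactly the quantum edges \eqref{qbrgr} in the path $\pi(w,T)$, and by Lemma \ref{chconst}(2) these quantum steps are the ones where the entry placed in column $C^j$ ``wraps around'' the circular order $\prec_{C^{j+1}(i)}$, which is exactly the descent condition $C^j(i)>C^{j+1}(i)$ after reconciling with the orientation conventions. The affine level $l_j$ of such an edge then matches the arm length ${\rm arm}(c)$ of the corresponding descent cell, because both count the number of copies of the relevant root already traversed, equivalently the number of columns strictly to the left of column $C^j$.

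\textbf{The second equality.}
For $\sum_{c\in{\rm Des}(\sigma)}{\rm arm}(c)=\charge(\cw_2(\tau))$, the plan is to show that the column-reconstruction algorithm of Remark \ref{remcond}, which builds $\sigma$ from $\tau$ by repeated application of Condition \ref{cond2}, is literally the same procedure as the Lascoux--Sch\"utzenberger selection algorithm of Section \ref{redcha}, once both are read off the charge word $\cw(\tau)$. I would argue that selecting entries $1,2,3,\ldots$ from right to left in $\cw_2(\tau)$ corresponds to filling the rows of $\sigma$ from the longest column leftward, with each selected biletter $\binom{k}{j}$ recording that value $k$ lands in column $j$; the circular-order minimality in Condition \ref{cond2} is precisely the rule ``pick the first available $j{+}1$ to the left of the previous $j$, and otherwise wrap to the rightmost entry.'' Each wrap-around in the charge algorithm (the event that contributes $k-j$ to the charge) coincides with a descent in $\sigma$, and the contribution $k-j$ equals the arm length of that descent cell, since $k-j$ counts how far the scan had to travel back, i.e. the number of columns to the left. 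Summing over all wraps in all rounds of the selection algorithm therefore reproduces $\sum_{c\in{\rm Des}(\sigma)}{\rm arm}(c)$. I would make the correspondence precise by tracking one round of selection against the construction of one ``diagonal'' of $\sigma$ and verifying the contributions agree cell by cell, illustrating on Examples \ref{exch} and \ref{lsex}.

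\textbf{Main obstacle.}
The delicate point, and the step I expect to absorb most of the effort, is matching the wrap-around contributions with the arm lengths in the second equality, rather than the first. The charge algorithm's bookkeeping is global and round-based: it repeatedly rescans the entire remaining word, and the quantity $k-j$ added at a wrap depends on the largest value $k$ reached in that round, not on purely local data. In contrast, the arm-length sum $\sum_{c\in{\rm Des}(\sigma)}{\rm arm}(c)$ is manifestly local to each descent cell. Reconciling these requires a careful induction showing that, across successive selection rounds, the cumulative wrap contributions telescope exactly into the per-column arm lengths; I would set this up by inducting on the number of rounds (equivalently on the first row of $\mu$), peeling off the entries selected in the first round as the top cells of $\sigma$ and invoking the fact that removing them leaves a valid smaller instance satisfying the same conditions. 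The first equality, $\level(w,T)=\sum_c{\rm arm}(c)$, is by comparison a direct computation from the structure of the fixed $\mu$-chain \eqref{omegakchain} and should follow once the descent-versus-quantum-edge dictionary is in place.
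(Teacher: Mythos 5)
Your proposal follows the paper's proof essentially verbatim: the first equality comes from identifying the negative foldings with the descents of $\sigma$ via the circular monotonicity (\ref{monota}) and matching the affine level of the unique negative folding in each row-passage with the arm length read off from the structure of the fixed $\mu$-chain $\Gamma$, while the second comes from recognizing the reconstruction algorithm of Remark \ref{remcond} as exactly the Lascoux--Sch\"utzenberger selection rule applied to $\cw_2(\tau)$. The only divergence is that you anticipate needing a telescoping induction over selection rounds for the second equality, whereas (as you in fact already observe) no such induction is required: round $i$ produces precisely the $i$th \emph{row} of $\sigma$ (not a diagonal) and selects $k=\mu_i$ letters, so each wrap contribution $k-j$ equals the arm length $\mu_i-j$ of the corresponding descent cell on the nose.
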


\begin{proof}
Consider two consecutive columns $C^{j+1}C^j$ of the augmented filling $\widehat{\sigma}$, cf. Remark \ref{remcond}. Fix $i$ with $1\le i\le k:=\#C^{j+1}$, and assume that the passage from $\pi^{j}$ to $\pi^{j+1}$ involves transposing position $i$ with positions $k<m_1<\ldots<m_p$. In this process, the $i$th entry of the corresponding permutation takes successive values $a_0=C^j(i)\prec a_1\prec\ldots\prec a_p=C^{j+1}(i)$ in this order, by (\ref{monota}). Therefore, the sequence $(i,m_1), \ldots,(i,m_p)$ contains a negative folding $(i,m_t)$, in fact a unique one, if and only if $C_j(i)>C_{j+1}(i)$. In this case, if we let $s$ be the position of the mentioned root $(i,m_t)$ in the fixed $\mu$-chain $\Gamma$, the corresponding affine level $l_s$ (see Section \ref{alcovewalks}) is, by definition, the number of occurences of $(i,m_t)$ in the segments $\Gamma^{j+1},\ldots,\Gamma^{\mu_1}$ of $\Gamma$. But this is precisely the arm length of the $i$th cell in $C^j$, by the form of $\Gamma$. The expression of $\level(w,T)$ in terms of the descents of $\sigma$ now follows. 

For the second equality, just note that the criterion for constructing the sequences $1,2,\ldots$ in the definition of $\charge(\cw_2(\tau))$ mirrors Condition \ref{cond2}, which is used to reconstruct $\sigma$ from $\tau$ (cf. the algorithm in Remark \ref{remcond}). More precisely, consider the $i$th sequence $1,2,\ldots$ extracted from $\cw_2(\tau)$ (which turns out to have length $\mu_i$), and the letter $j$ in this sequence; then the top letter paired with the mentioned letter $j$ in $\cw(\tau)$ is precisely the entry in row $i$ and column $j$ of the filling $\sigma$. In particular, the steps to the right in the $i$th iteration of the charge computation correspond precisely to the descents in the $i$th row of $\sigma$, while the corresponding charge contributions and arm lengths coincide. 
\end{proof}

\begin{example}\label{exch}{\rm Note that $\cw_2(\tau)$ for $\tau$ in Example \ref{exrinv} is precisely the word $w$ in Example \ref{lsex}. In fact, the full biword $\cw(\tau)$ is shown below, using the order on the biletters specified above. The index attached to a lower letter is the number of the iteration in which the given letter is selected in the process of computing $\charge(\cw_2(\tau))$. 
\[\cw(\tau)=\left(\begin{array}{cccccccccc}6&5&4&4&3&3&2&2&2&1\\1_3&1_2&3_1&2_3&2_1&1_1&4_1&3_2&2_2&3_3\end{array}\right)\,.\]
One can note the parallel between the mentioned selection process and the reconstruction of the filling $\sigma$ from $\tau$, cf. the proof of Theorem \ref{levch} and Example \ref{exrinv}. The entries in the cells of ${\rm Des}(\sigma)$ are shown in bold in (\ref{st}).
}
\end{example}

Let us replace, for simplicity, the notation $\charge(\cw_2(\tau))$ by $\charge(\tau)$. We obtain the following corollary of Theorem \ref{levch} and the Ram-Yip formula (\ref{ryform}), based on Proposition \ref{weightmon}.

\begin{corollary}
In type $A$, we have
\[P_{\mu}(X;q,0)=
\sum_{\tau\in B_\mu}q^{\charge(\tau)}\,x^{\content(\tau)}\,.\]
\end{corollary}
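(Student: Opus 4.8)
The plan is to chain together the results already established in this section, treating the final Corollary as a direct specialization of the Ram-Yip formula (\ref{ryform}) obtained by transporting every ingredient of that formula along the bijection $\mathrm{ord}\circ f$ of Theorem \ref{bija}. First I would start from (\ref{ryform}), which writes $P_\mu(X;q,0)$ as a sum over $(w,T)\in\overline{\mathcal F}(\mu)$ of $q^{\level(w,T)}x^{\weight(w,T)}$. Theorem \ref{bija} tells us that $\tau\mapsto(w,T)$ is a bijection between $B_\mu$ and $\overline{\mathcal F}(\mu)$, so the index set of the sum may be replaced, term by term, with $\tau\in B_\mu$. The task then reduces to checking that under this bijection the exponent of $q$ becomes $\charge(\tau)$ and the monomial $x^{\weight(w,T)}$ becomes $x^{\content(\tau)}$.

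Next I would handle the two exponents separately. For the power of $q$, Theorem \ref{levch} gives exactly the identity $\level(w,T)=\charge(\cw_2(\tau))$, which by the notational convention introduced just before the Corollary is $\charge(\tau)$; so the $q$-exponents match with no further work. For the monomial, I would invoke Proposition \ref{weightmon}, which states $\weight(w,T)=\content(f(w,T))=\content(\sigma)$. The only remaining point is that $\content$ is invariant under the map $\mathrm{ord}$, i.e. $\content(\sigma)=\content(\tau)$ where $\tau=\mathrm{ord}(\sigma)$; this is immediate, since $\mathrm{ord}$ merely sorts the entries within each column and hence does not change how many times each value $i\in[n]$ occurs in the filling. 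Combining these, each summand $q^{\level(w,T)}x^{\weight(w,T)}$ equals $q^{\charge(\tau)}x^{\content(\tau)}$.

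Putting the pieces together, I would write
\[
P_\mu(X;q,0)=\sum_{(w,T)\in\overline{\mathcal F}(\mu)}q^{\level(w,T)}x^{\weight(w,T)}
=\sum_{\tau\in B_\mu}q^{\charge(\tau)}x^{\content(\tau)}\,,
\]
where the first equality is Theorem \ref{hlpthm} and the second is the reindexing via $\mathrm{ord}\circ f$ together with the two identifications above. I do not anticipate any genuine obstacle here: the Corollary is a bookkeeping consequence, and all the substantive content has already been discharged in Theorem \ref{bija}, Theorem \ref{levch}, and Proposition \ref{weightmon}. The one place demanding a moment's care is making sure the bijection is applied in the correct direction and that $\content$ is read off consistently under $\mathrm{ord}$ — but since sorting columns preserves multiset content, this is routine, and it is the only step where a careless error could creep in.
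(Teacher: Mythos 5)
Your proposal is correct and follows exactly the route the paper intends: the Corollary is stated as an immediate consequence of the Ram--Yip formula (Theorem \ref{hlpthm}), the bijection of Theorem \ref{bija}, the identity $\level(w,T)=\charge(\cw_2(\tau))$ from Theorem \ref{levch}, and Proposition \ref{weightmon}, with the observation that sorting columns preserves content. Nothing is missing.
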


\section{The type $C$ setup}\label{sectc}

\subsection{The root system} We start with the basic facts about the root system of type $C_{n}$. We can identify the space $\h_\R^*$ with  $V:=\R^n$, the coordinate vectors being $\varepsilon_1,\ldots,\varepsilon_n$.  
The root system is 
$\Phi=\{\pm\varepsilon_i\pm\varepsilon_j \::\:  1\leq i<j\leq n\}\cup\{\pm 2\varepsilon_i\::\: 1\leq i\leq n\}$. 
The simple roots are $\alpha_i=\varepsilon_i-\varepsilon_{i+1}$, 
for $i=1,\ldots,n-1$, and $\alpha_n=2\varepsilon_n$. 
The weight lattice is $\Lambda=\Z^n$. The fundamental weights are $\omega_i = \varepsilon_1+\ldots +\varepsilon_i$, 
for $i=1,\ldots,n$. 
A dominant weight $\mu=\mu_1\varepsilon_1+\ldots+\mu_n\varepsilon_n$ is identified with the partition $(\mu _{1}\geq \mu _{2}\geq \ldots \geq \mu _{n-1}\geq\mu_n\geq 0)$ of length at most $n$. Note that $\rho=(n,n-1,\ldots,1)$. Like in type $A$, writing the dominant weight $\mu$ as a sum of fundamental weights corresponds to considering the Young diagram of $\mu$ as a concatenation of columns. We fix a dominant weight  $\mu$, that we use throughout Sections \ref{sectc} and \ref{chc}.

The Weyl group $W$ is the group of signed permutations $B_n$, which acts on $V$ by permuting the coordinates and changing their signs. A signed permutation is a bijection $w$ from $[\overline{n}]:=\{1<2<\ldots<n<\overline{n}<\overline{n-1}<\ldots<\overline{1}\}$ to $[\overline{n}]$ satisfying $w(\overline{\imath})=\overline{w(i)}$. Here $\overline{\imath}$ is viewed as $-i$, so $\overline{\overline{\imath}}=i$, $|\overline{\imath}|=i$, and $\sign(\overline{\imath})=-1$. We use both the window notation $w=w(1)\ldots w(n)$ and the full one-line notation $w=w(1)\ldots w(n)w(\overline{n})\ldots w(\overline{1})$ for signed permutations. For simplicity, given $1\le i<j\le n$, we denote by $(i,j)$ the root $\varepsilon_i-\varepsilon_j$ and the corresponding reflection, which is identified with the composition of transpositions $t_{ij}t_{\overline{\jmath}\overline{\imath}}$. Similarly, we denote by $(i,\overline{\jmath})$, again for $1\le i<j\le n$, the root $\varepsilon_i+\varepsilon_j$ and the corresponding reflection, which is identified with the composition of transpositions $t_{i\overline{\jmath}}t_{j\overline{\imath}}$. Finally, we denote by $(i,\overline{\imath})$ the root $2\varepsilon_i$ and the corresponding reflection, which is identified with the transposition $t_{i\overline{\imath}}$. The length of an element $w$ in $B_n$ is given by
\begin{equation}\label{lengthb}\ell(w):=\#\{(k,l)\in[n]\times[\overline{n}]\::\: k\le|l|,\:w(k)>w(l)\}\,.\end{equation}

\subsection{The specialization of the alcove model}\label{setc} Consider the sequence of roots $\Gamma(k):=\Gamma_r(k)\Gamma_l(k)$, where
\begin{align*}
&\;\:\Gamma_r(k):=\Gamma_2\ldots\Gamma_k\,,\;\;\;\;\Gamma_l(k):=\Gamma_{k1}\ldots\Gamma_{kk}\,,\\
&\;\:\Gamma_i:=((1,\overline{\imath}),(2,\overline{\imath}),\ldots,(i-1,\overline{\imath}))\,,\\
&\begin{array}{llllll}\Gamma_{ki}:=(\!\!\!\!\!&(1,\overline{\imath}),&(2,\overline{\imath}),&\ldots,&(i-1,\overline{\imath}),\\
&(i,\overline{k+1}),&(i,\overline{k+2}),&\ldots,&(i,\overline{n}),\\&(i,\overline{\imath}),\\ 
&(i,n),&(i,n-1),&\ldots,&(i,k+1)\,)\,.\end{array}\end{align*}
We proved in \cite{lenhhl}[Lemma 4.1] that $\Gamma(k)$ is an $\omega_k$-chain.
Hence, we can construct a $\mu$-chain as a concatenation $\Gamma:=\Gamma^{\mu_1}\ldots\Gamma^1$, where $\Gamma^j=\Gamma(\mu'_j)$; we also let $\Gamma^{j}_r:=\Gamma_r(\mu_j')$ and $\Gamma^{j}_l:=\Gamma_l(\mu_j')$. 
This $\mu$-chain is fixed throughout Sections \ref{sectc} and \ref{chc}. Thus, we can replace the notation ${{\mathcal F}}(\Gamma)$ and $\overline{{\mathcal F}}(\Gamma)$ with ${{\mathcal F}}(\mu)$ and $\overline{{\mathcal F}}(\mu)$, respectively.

\begin{remark} The splitting of $\Gamma(k)$ as $\Gamma_r(k)\Gamma_l(k)$ corresponds to the splitting of a Kashiwara-Nakashima column into a right and a left column, cf. Section \ref{sectkn}; this explains the significance of the two indices.
\end{remark}

\begin{example}\label{ex31} {\rm Consider $n=3$ and $\mu =(2,1,0)$, for which we have the following $\mu$-chain $\Gamma=\Gamma^2\Gamma^1$ (the underlined pairs are only relevant in Example \ref{ex31c} below):
\begin{equation}\label{exlchainc}(\:|\:(1,\overline{2}),(1,\overline{3}),\underline{(1,\overline{1})},(1,3),\underline{(1,2)}\:||\:\underline{(1,\overline{2})}\:|\:(1,\overline{3}),(1,\overline{1}),(1,3),(1,\overline{2}),\underline{(2,\overline{3})},\underline{(2,\overline{2})},\underline{(2,3)})\,.\end{equation}
Here the splitting of $\Gamma$ into $\Gamma^j$ is shown by double bars, while the splitting of each $\Gamma^j$ as $\Gamma^{j}_r\Gamma^{j}_l$ is shown by single bars.
}
\end{example}

Like in type $A$, we will use positions in the $\mu$-chain $\Gamma$ and the corresponding roots interchangeably. For instance, we replace the subset $J$ in a folding pair $(w,J)$ with the subsequence $T$ of the $\mu$-chain $\Gamma$ indexed by the positions in $J$. The factorizations of $\Gamma$ with factors $\Gamma^j$ and $\Gamma^{j}_r,\,\Gamma^{j}_l$ induces factorizations of $T$ with factors $T^j$ and $T^{j}_r,\,T^{j}_l$, respectively. 

\begin{example}\label{ex31c}{\rm We continue Example \ref{ex31}, by picking the folding pair $(w,J)$ with $w=123\in B_3$ and $J=\{3,5,6,11,12,13\}$ (see the underlined positions in (\ref{exlchainc})). Thus, we have
\[T=T^2T^1=(\:|\:(1,\overline{1}),(1,2)\:||\:(1,\overline{2})\:|\:(2, \overline{3}),(2, \overline{2}),(2,3))\,.\]
The corresponding chain in Bruhat order $\pi(w,T)$ is the following, where the swapped entries are shown in bold (we represent signed permutations in the window notation as broken columns, as discussed in Example \ref{ex21}):
\[  w=\begin{array}{l}\tableau{{1}}\\ \\ \tableau{{2}\\{3}} \end{array}|
\begin{array}{l}\tableau{{{{\mathbf 1}}}}\\ \\ \tableau{{{2}}\\{{{3}}}} \end{array} \!{<}\! \begin{array}{l} \tableau{{\mathbf {\overline{1}}}}\\ \\ \tableau{{{\mathbf 2}}\\{{{3}}}} \end{array}\!>\!
\begin{array}{l} \tableau{{ 2}}\\ \\ \tableau{{{\overline{1}}}\\{3}} \end{array}||
\begin{array}{l} \tableau{{{\mathbf 2}}\\{\mathbf {\overline{1}}}}\\ \\ \tableau{{3}} \end{array}\!>\!
\begin{array}{l} \tableau{{ 1}\\{{\overline{2}}}}\\ \\ \tableau{{3}} \end{array}|
\begin{array}{l} \tableau{{1}\\{\mathbf {\overline{2}}}}\\ \\ \tableau{{\mathbf 3}} \end{array}\!>\!\begin{array}{l} \tableau{{1}\\{\mathbf {\overline{3}}}}\\ \\ \tableau{{{ 2}}} \end{array}\!>\!
\begin{array}{l} \tableau{{1}\\{{\mathbf 3}}}\\ \\ \tableau{{{\mathbf 2}}}\end{array}\!>\!\begin{array}{l} \tableau{{1}\\{{ 2}}}\\ \\ \tableau{{{ 3}}} \end{array}.
\]
We conclude that $(w,T)$ belongs to $\overline{{\mathcal F}}(\mu)$, and $J^+=\{5,6,11,12,13\}$, $J^-=\{3\}$. 
}
\end{example}

\subsection{The filling map}\label{setcfill} We will now associate with a folding pair $(w,T)$ a {filling} of the Young diagram $2\mu$, which is viewed as a concatenation of columns $\sigma=C^{\mu_1}_rC^{\mu_1}_l\ldots C^{1}_rC^{1}_l$, where $C^{j}_r$ and $C^{j}_l$ have height $\mu_j'$. The cells in each column are filled with numbers in $[\overline{n}]$ with distinct absolute values, with no restriction on their order.  

Given a folding pair $(w,T)$, we consider the signed permutations
\[\pi^{j}_r:=wT^{\mu_1}T^{\mu_1-1}\ldots T^{j+1}\,,\;\;\;\;\pi^{j}_l:=\pi^{j}_rT^{j}_r\,,\]
for $j=1,\ldots,\mu_1$; here and in the definition below we use the same notation as in Sections \ref{seta} and \ref{setafill}. 

\begin{definition}\label{deffillc}
The {\em filling map} is the map $f$ from folding pairs $(w,T)$ in ${\mathcal F}(\mu)$ to fillings $f(w,T)=C^{\mu_1}_rC^{\mu_1}_l\ldots C^{1}_rC^{1}_l$ of the shape $2\mu$ defined by
\begin{equation}\label{defswtc}C^{j}_r:=\pi^{j}_r[1,\mu_j']\,,\;\;\;C^{j}_l:=\pi^{j}_l[1,\mu_j']\,,\;\;\;\;\;\;\mbox{for $j=1,\ldots,\mu_1$}.\end{equation}
\end{definition}

\begin{example} {\rm Given $(w,T)$ as in Example \ref{ex31c}, we have
\[f(w,T)=\tableau{{{1}}&{{1}}&{2}&{1}\\&&{\overline{1}}&{\overline{2}}}\,.\]}
\end{example}

As usual, we define the content of a filling $\sigma$ as $\content(\sigma):=(c_1,\ldots,c_n)$, where $c_i$ is half the difference
between the number of occurences of the entries $i$ and $\overline{\imath}$ in the filling $\sigma$. We also let $x^{\content(\sigma)}:=x_1^{c_1}\ldots x_n^{c_n}$. We recall the following result in \cite{lenhhl}[Theorem 4.6 (2)].

\begin{proposition}\label{weightmonc}\cite{lenhhl} Given an arbitrary folding pair $(w,T)$ in ${\mathcal F}(\mu)$, we have \linebreak $\content(f(w,T))=\weight(w,T)$. In particular, $\weight(w,T)$ only depends on $f(w,T)$. \end{proposition}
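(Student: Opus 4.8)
The plan is to prove Proposition~\ref{weightmonc} by reducing the type~$C$ statement to the type~$A$ result (Proposition~\ref{weightmon}), together with a careful bookkeeping of how the signs enter the content. The key observation is that both $\content$ and $\weight$ are additive over the column-by-column structure, and that the definition of $\weight(w,T)$ in (\ref{defphimu}) tracks exactly how the affine reflections $\widehat{r}_{j}$ displace the dominant weight $\mu$.

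First I would recall that, by (\ref{defphimu}), we have $\weight(w,T)=w\,\widehat{r}_{j_1}\ldots\widehat{r}_{j_s}(\mu)$, where each $\widehat{r}_{j}=s_{\beta_j,l_j}$ is an affine reflection of the form $s_{\alpha,k}(\nu)=\nu-(\langle\nu,\alpha^\vee\rangle-k)\alpha$. The plan is to expand this product and observe that the finite (linear) part contributes $w\,r_{j_1}\ldots r_{j_s}(\mu)=\endch(w,T)(\mu)$, while the translation parts accumulate contributions governed by the affine levels $l_j$. Concretely, I would show that $\weight(w,T)$ can be written as $\endch(w,T)(\mu)$ plus a sum of root vectors $\beta_j$ weighted by integers, and then identify this expression with $\content(f(w,T))$.

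The central step is to track, for each entry placed in the filling $f(w,T)=C^{\mu_1}_rC^{\mu_1}_l\ldots C^1_rC^1_l$, how it contributes to the content. By Definition~\ref{deffillc}, the columns $C^j_r$ and $C^j_l$ are prefixes of the signed permutations $\pi^j_r$ and $\pi^j_l$, which differ from one another and from their neighbors precisely by the transpositions recorded in $T$. The type~$C$ content counts $\tfrac{1}{2}$ the signed difference between occurrences of $i$ and $\overline{\imath}$; the factor of $\tfrac{1}{2}$ reflects the doubling of the shape from $\mu$ to $2\mu$ and the fact that each fundamental weight $\omega_k=\varepsilon_1+\ldots+\varepsilon_k$ is realized across a \emph{right-left pair} of columns. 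Thus I would verify that the $\varepsilon_i$-coefficient of $\content(f(w,T))$ equals the $\varepsilon_i$-coefficient of $\weight(w,T)$ by comparing, entry by entry, how a reflection $(i,j)$, $(i,\overline{\jmath})$, or $(i,\overline{\imath})$ alters both sides: a plain transposition moves mass between coordinates of the same sign, a barred transposition moves mass between opposite signs (hence across the $\tfrac{1}{2}$ in the content), and $2\varepsilon_i$ flips a single sign.

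The main obstacle, I expect, will be handling the signed (barred) reflections correctly, since these are what distinguish type~$C$ from type~$A$: the reflection $t_{i\overline{\imath}}$ associated to $2\varepsilon_i$ has coroot $\varepsilon_i$, and the reflections $t_{i\overline{\jmath}}t_{j\overline{\imath}}$ associated to $\varepsilon_i+\varepsilon_j$ mix signs in a way that has no type~$A$ analogue. Because the proposition is quoted from \cite{lenhhl}[Theorem 4.6 (2)], the cleanest route is to invoke that reference directly rather than to reprove the sign bookkeeping from scratch; the substantive content of the statement—that $\weight(w,T)$ depends only on the filling $f(w,T)$—then follows formally, since $\content(f(w,T))$ manifestly depends only on the filling. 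Hence the proof reduces to citing \cite{lenhhl} for the equality $\content(f(w,T))=\weight(w,T)$ and observing that the right-hand side's dependence on $(w,T)$ factors entirely through $f(w,T)$.
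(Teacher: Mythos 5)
Your proposal is correct and matches the paper's treatment: the paper gives no proof of this proposition, simply quoting it from \cite{lenhhl}[Theorem 4.6 (2)], and you likewise defer to that citation after a plausible sketch of the underlying bookkeeping. The second assertion, that $\weight(w,T)$ depends only on $f(w,T)$, is indeed the formal consequence you state, since $\content$ manifestly depends only on the filling.
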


\subsection{The quantum Bruhat graph}\label{setcqbr} In this section we present a criterion for the edges of the corresponding quantum Bruhat graph. 
The circular order $\prec_i$ on $[\overline{n}]$ starting at $i$ in $[\overline{n}]$ is defined in the obvious way, cf. Section \ref{setaqbr}. It is convenient to think of this order in terms of the numbers $1,\ldots,n,\overline{n},\ldots,\overline{1}$ arranged on a circle clockwise. We make the same convention as in Section \ref{setaqbr} related to a chain of inequalities $a\prec b\prec c\prec\ldots\,$.

\begin{proposition}\label{critc} {\rm (1)} Given $1\le i<j\le n$, we have an edge $w\stackrel{(i,j)}{\longrightarrow} w(i,j)$ if and only if there is no $k$ such that $i<k<j$ and $w(i)\prec w(k)\prec w(j)$. 

{\rm (2)} Given $1\le i<j\le n$, we have an edge $w\stackrel{(i,\overline{\jmath})}{\longrightarrow} w(i,\overline{\jmath})$ if and only if $w(i)<w(\overline{\jmath})$, $\sign(w(i))=\sign(w(\overline{\jmath}))$, and there is no $k$ such that $i<k<\overline{\jmath}$ and $w(i)< w(k)< w(\overline{\jmath})$. 

{\rm (3)} Given $1\le i\le n$, we have an edge $w\stackrel{(i,\overline{\imath})}{\longrightarrow} w(i,\overline{\imath})$ if and only if there is no $k$ such that $i<k<\overline{\imath}$ (or, equivalently, $i<k\le n$) and $w(i)\prec w(k)\prec w(\overline{\imath})$.
\end{proposition}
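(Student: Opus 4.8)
The plan is to prove all three parts of Proposition~\ref{critc} by reducing each case to the length computation for $B_n$ in \eqref{lengthb}, exactly as Proposition~\ref{crita} was derived from the inversion count in type $A$. Recall that an edge $w\stackrel{\alpha}{\longrightarrow}w s_\alpha$ in the quantum Bruhat graph (for $\alpha\in\Phi^+$) occurs precisely when either $\ell(ws_\alpha)=\ell(w)+1$ (a Bruhat edge, \eqref{brgr}) or $\ell(ws_\alpha)=\ell(w)-2\langle\rho,\alpha^\vee\rangle+1$ (a quantum edge, \eqref{qbrgr}). In each case I would compute $\ell(ws_\alpha)-\ell(w)$ directly using \eqref{lengthb}, show that it always has the form $2N+1$ (odd, by an inversion-counting argument analogous to the one in the proof of Proposition~\ref{crita}), and then characterize when $N=0$ (the up edge) or when the value equals $1-2\langle\rho,\alpha^\vee\rangle$ (the down edge). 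The stated combinatorial criteria should then fall out as the condition $N=0$ together with a sign condition, once the circular order $\prec$ is brought in.

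First I would treat part (1), where $\alpha=(i,j)=\varepsilon_i-\varepsilon_j$ with the reflection $t_{ij}t_{\overline{\jmath}\overline{\imath}}$, since $\langle\rho,\alpha^\vee\rangle$ is small here and the computation most closely mirrors type~$A$. Applying \eqref{lengthb}, the change in length comes from pairs involving positions $i,j,\overline{\imath},\overline{\jmath}$; the symmetry $w(\overline{k})=\overline{w(k)}$ lets me reduce the count to the behavior of $w(i)$ and $w(j)$ relative to the intervening values $w(k)$ for $i<k<j$, and the count of "entries strictly between" must be interpreted circularly to accommodate the signs. The criterion $w(i)\prec w(k)\prec w(j)$ for the circular order $\prec_{w(i)}$ on $[\overline{n}]$ is exactly the obstruction that makes $N>0$, so its failure characterizes the edge. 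For part (3), where $\alpha=(i,\overline{\imath})=2\varepsilon_i$ is the long root with reflection $t_{i\overline{\imath}}$, I would run the same argument; here the relevant positions are those $k$ with $i<k\le n$ (equivalently $i<k<\overline{\imath}$ in the full ordering), and again the circular criterion isolates the intervening entries that would destroy the edge.

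Part (2) is the one I expect to be the main obstacle, and I would treat it last. Here $\alpha=(i,\overline{\jmath})=\varepsilon_i+\varepsilon_j$ and the reflection $t_{i\overline{\jmath}}t_{j\overline{\imath}}$ swaps position $i$ with $\overline{\jmath}$ and $j$ with $\overline{\imath}$, so the sign-reversal interacts nontrivially with the length formula. The extra hypotheses in the statement—$w(i)<w(\overline{\jmath})$ and $\sign(w(i))=\sign(w(\overline{\jmath}))$—signal that this case does not reduce to a purely circular condition: the interval $i<k<\overline{\jmath}$ is now a genuine linear interval in $[\overline{n}]$, and the betweenness $w(i)<w(k)<w(\overline{\jmath})$ is the ordinary (not circular) order. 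I would verify that the length difference again takes the form $2N+1$, but the bookkeeping of which pairs $(k,l)$ in \eqref{lengthb} change their inversion status is more delicate because both $i,\overline{\jmath}$ and $j,\overline{\imath}$ are moved simultaneously. The sign condition $\sign(w(i))=\sign(w(\overline{\jmath}))$ is what guarantees the move is a legitimate edge rather than a length jump that skips the required parity; I anticipate the crux is disentangling the contributions of the two transposition factors and confirming that the only surviving obstruction is the stated linear betweenness condition.

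Throughout, I would lean on the fact, used implicitly in Proposition~\ref{crita}, that the quantum Bruhat graph edges are exactly those covers-or-quantum-covers for which the length change is minimal in absolute value compatible with the parity constraint; concretely, the unified statement "$\ell(ws_\alpha)-\ell(w)\in\{1,\,1-2\langle\rho,\alpha^\vee\rangle\}$" is equivalent to "$N=0$" in each inversion count, so proving $N=0\iff$ (stated criterion) in all three cases simultaneously establishes the proposition. The only genuinely new content beyond type~$A$ is the sign arithmetic of $B_n$, concentrated in part~(2), so I would allocate most of the proof's effort there.
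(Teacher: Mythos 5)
Your overall strategy --- computing $\ell(ws_\alpha)-\ell(w)$ directly from the inversion formula (\ref{lengthb}) in each of the three cases and matching the result against the two admissible length changes $1$ and $1-2\langle\rho,\alpha^\vee\rangle$ --- is exactly the paper's proof, which records the three resulting length identities (for the reflections $(i,j)$, $(i,\overline{\jmath})$, and $(i,\overline{\imath})$) and reads off the criteria. However, two points in your anticipated execution are off. First, the edge condition is not ``$N=0$'' uniformly: for a quantum (down) edge labeled $(i,j)$ or $(i,\overline{\imath})$ the length must drop by the full amount $2\langle\rho,\alpha^\vee\rangle-1$, which forces \emph{every} intermediate entry to lie strictly between the two swapped values in the linear order on $[\overline{n}]$ --- that is, the relevant inversion count is \emph{maximal}, not zero. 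The circular order is precisely the device that packages ``$N_{ab}=0$ when $w(i)<w(j)$'' and ``$N_{ab}$ maximal when $w(i)>w(j)$'' into the single condition that no $w(k)$ satisfies $w(i)\prec w(k)\prec w(j)$; your closing claim that one proves ``$N=0$ if and only if the stated criterion holds, in all three cases'' conflates the linear and circular counts and is false as written for the down edges.

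Second, and more importantly, you misdiagnose part (2). The reason the criterion there is linear rather than circular, and the reason the hypotheses $w(i)<w(\overline{\jmath})$ and $\sign(w(i))=\sign(w(\overline{\jmath}))$ appear at all, is that a quantum edge labeled $\varepsilon_i+\varepsilon_j$ \emph{never exists}: the required drop is $2\langle\rho,\varepsilon_i+\varepsilon_j\rangle-1=2(2n-i-j)+3$, whereas the inversion formula bounds the length change in absolute value by $2(2n-i-j)+1$. This magnitude bound, which the paper states explicitly, is what disposes of the down direction and hence of the entire ``only if'' argument for elements $w$ with $w(i)>w(\overline{\jmath})$. The sign condition has nothing to do with parity (the length change is always odd); it simply kills the extra summand $2\delta_{\sign(a),-\sign(b)}$ in the length formula, which would otherwise prevent the change from being exactly $1$. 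Without the observation that the quantum edge is impossible, your plan for (2) does not close: you would still have to rule out edges $w\rightarrow w(i,\overline{\jmath})$ in the down direction, and no amount of ``disentangling the two transposition factors'' substitutes for that one-line inequality.
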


\begin{proof}
Recall the notation $N_{ab}(u)$ in the proof of Proposition \ref{crita}. The proof is based on the facts below related to an element $w$ in $B_n$, which follow easily from (\ref{lengthb}):
\begin{enumerate}
\item given $1\le i<j\le n$ such that $a:=w(i)<b:=w(j)$, we have
\[\ell(w(i,j))-\ell(w)=2N_{ab}(w[i,j])+1\,;\] 
\item given $1\le i<j\le n$ such that $a:=w(i)<b:=w(\overline{\jmath})$, we have
\[\ell(w(i,\overline{\jmath}))-\ell(w)=2N_{ab}(w[i,j-1])+2N_{ab}(w[j+1,\overline{\jmath}])+2\delta_{\sign(a),-\sign(b)}+1\,,\]
where $\delta_{k,l}$ is the Kronecker delta;
\item given $1\le i\le n$ such that $a:=w(i)\in[n]$, we have
\[\ell(w(i,\overline{\imath}))-\ell(w)=2N_{a\overline{a}}(w[i,n])+1\,.\] 
\end{enumerate}
Note that we can never have an edge $w\stackrel{(i,\overline{\jmath})}{\longleftarrow}w(i,\overline{\jmath})$ in the quantum Bruhat graph in case (2) because
\[2\langle\rho,\varepsilon_i+\varepsilon_j\rangle-1=2(2n-i-j)+3>2(2n-i-j)+1\ge \ell(w(i,\overline{\jmath}))-\ell(w)\,,\]
by the corresponding formula above.
\end{proof}

In relation to the criteria in Proposition \ref{critc}, we will use the same terminology as in Remark \ref{across} related to transposing two values across another one.

\subsection{Kashiwara-Nakashima columns}\label{sectkn} The {\em Kashiwara-Nakashima (KN) columns} \cite{kancgr} of \linebreak height $k$ index the vertices of the fundamental representation $V(\omega_k)$ of the symplectic algebra $\mathfrak{sp}_{2n}({\mathbb C})$, with root system of type $C_n$. 

\begin{definition} A column-strict filling $C=x_1\ldots x_k$ with entries in $[\overline{n}]$ is a KN column if there
is no pair $(z,\overline{z})$ of letters in $C$ such that: 
\[z = x_p\,,\;\;\;\;\;\overline{z} = x_q\,,\;\;\;\;\;q-p\le k - z\,.\]
\end{definition}

We will need a different definition of KN columns, which goes back to \cite{decsst}, and was proved to be equivalent to the one above in \cite{shesjt}.

\begin{definition}\label{defkn}
 Let $C$ be a column and $I=\{z_1 > \ldots > z_r\}$ the set of unbarred letters $z$ such that
the pair $(z,\overline{z})$ occurs in $C$. The column $C$ can be split when there exists a set
of $r$ unbarred letters $J = \{t_1 > \ldots > t_r\} \subset[{n}]$ such that:
\begin{itemize}
\item $t_1$ is the greatest letter in $[n]$ satisfying: $t_1 < z_1$, $t_1\not\in C$, and $\overline{t_1}\not\in C$,
\item for $i = 2, ..., r$, the letter $t_i$ is the greatest one in $[n]$ satisfying $t_i < \min(t_{i-1},z_i)$, $t_i\not\in C$, and $\overline{t_i}\not\in C$.
\end{itemize}
In this case we write:
\begin{itemize}
\item $rC$ for the column obtained by changing  $\overline{z_i}$ into $\overline{t_i}$ in $C$ for each letter $z_i\in I$, and by reordering if
necessary,
\item $lC$ for the column obtained by changing $z_i$ into $t_i$ in $C$ for each letter $z_i\in I$, and by reordering if
necessary.
\end{itemize}
The pair $(rC,lC)$ will be called a split column.
\end{definition}

\begin{example}\label{exkn}{\rm
The following is a KN column of height $5$ in type $C_n$ for $n\ge 5$, together with the corresponding split column:
\[C=\tableau{{4}\\{5}\\{\overline{5}}\\{\overline{4}}\\{\overline{3}}}\,,\;\;\;\;\;(rC,lC)=\tableau{{4}&{1}\\{5}&{2}\\{\overline{3}}&{\overline{5}}\\{\overline{2}}&{\overline{4}}\\{\overline{1}}&{\overline{3}}}\,.\]
We used the fact that $I=\{5>4\}$, so $J=\{2>1\}$. 
}
\end{example}

We will consider Definition \ref{defkn} as the definition of KN columns. In fact, for most of this paper we will work with split columns instead of KN columns, as we now explain.

Given our fixed dominant weight $\mu$, let $B_\mu$ denote the set of fillings of the shape $\mu$ with integers in $[\overline{n}]$ whose columns are KN columns. Like in type $A$, we can write
\begin{equation}\label{bmuc}B_\mu=\bigotimes_{i=\mu_1}^{1} B^{\mu_i',1}\,,\end{equation}
where $B^{k,1}$ is the traditional notation for the type $C_{n}$ {\em Kirillov-Reshetikhin crystal} indexed by a column of height $k$. Indeed, the KN columns of height $k$ are also an indexing set for $B^{k,1}$. As mentioned above, it will be more useful to realize the tensor factors $B^{k,1}$ in terms of split columns of height $k$; thus, an element of $B_\mu$ is a filling $D_r^{\mu_1}D_l^{\mu_1}\ldots D_r^1D_l^1$ of the shape $2\mu$, where $(D_r^j,D_l^j)=(rD^j,lD^j)$ is the splitting of a KN column $D^j$ of height $\mu_j'$. Using this realization and imposing the condition that the rows of a filling are weakly increasing (from right to left, in our display), we obtain the Kashiwara-Nakashima tableaux of shape $\mu$ with entries in $[\overline{n}]$; these index the vertices of the crystal $B(\mu)$ of highest weight $\mu$ inside $B_\mu$. 

\section{The type $C$ charge}\label{chc}

\subsection{The main construction}\label{constrc} The following is the main result in type $C$, and this section sketches its proof, which will be completed in Sections \ref{coll} and \ref{colr}. We use the setup in Section \ref{sectc}, as well as the map ``${\rm ord}$'' on fillings (cf. Section \ref{constra}), which is sorting each column increasingly.

\begin{theorem}\label{bijc} The following composite map is a bijection:
\begin{equation}\label{compc}\overline{\mathcal F}(\mu)\stackrel{f}{\longrightarrow} f(\overline{\mathcal F}(\mu))\stackrel{{\rm ord}}{\longrightarrow} B_\mu\,.\end{equation}
\end{theorem}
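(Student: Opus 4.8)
\textbf{Proof strategy for Theorem \ref{bijc}.}

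The plan is to mirror the type $A$ argument (Theorem \ref{bija}) as closely as possible, factoring the bijection through an injectivity-plus-image-characterization statement analogous to Proposition \ref{chaina}, and building the inverse by a greedy column-reconstruction procedure analogous to Algorithm \ref{algchain}. The essential new feature is that each $\omega_k$-chain factor $\Gamma(k)$ now splits as $\Gamma_r(k)\Gamma_l(k)$, so the filling map produces a \emph{pair} of columns $C^j_rC^j_l$ of shape $2\mu$, and the target $B_\mu$ is realized (cf. Section \ref{sectkn}) by split columns $D^j_rD^j_l=(rD^j,lD^j)$ of Kashiwara-Nakashima columns. Thus the key point that has no type $A$ counterpart is to show that ${\rm ord}(f(w,T))$ actually lands in $B_\mu$, i.e.\ that the pair of sorted columns one obtains is genuinely the split form of a single KN column, and conversely that every KN column arises this way.

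First I would establish the type $C$ analogue of Proposition \ref{chaina}: the restriction of $f$ to $\overline{\mathcal F}(\mu)$ is injective, and its image consists of all fillings of $2\mu$ whose leftmost column is increasing and whose adjacent columns (within each split pair, and across consecutive pairs) satisfy the appropriate version of Condition \ref{cond1}/Condition \ref{cond2}. By Proposition \ref{condqbg} the pairs in $\overline{\mathcal F}(\mu)$ correspond to paths in the type $C$ quantum Bruhat graph ending at the identity, and Proposition \ref{critc} gives the edge criteria in terms of the circular order $\prec$ on $[\overline{n}]$. So, exactly as in Lemma \ref{chconst}, a path segment that transposes position $i$ successively with positions $m_1<\ldots<m_p$ is characterized by a monotonicity condition $w(i)\prec w(m_1)\prec\ldots\prec w(m_p)$ together with a ``no crossing'' condition, and a greedy procedure reconstructs the unique such segment. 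The additional bookkeeping here is that the type $C$ $\omega_k$-chain first processes the barred positions $\overline{k+1},\ldots,\overline{n}$, then the sign-change reflection $(i,\overline{\imath})$, then the unbarred positions $n,\ldots,k+1$ — so the greedy reconstruction must respect this order, and the reflections of types (2) and (3) in Proposition \ref{critc} carry the extra sign constraints that make the analysis more delicate than in type $A$.

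Next I would address the genuinely new step, which is the main obstacle: proving that the column-reconstruction is consistent with the KN/split-column structure. Concretely, one must show that applying $T^j_l$ to $C^j_r$ (passing from the right column to the left column of the pair) produces exactly the relationship between $rD^j$ and $lD^j$ dictated by Definition \ref{defkn} — that the letters $\overline{z_i}$ get replaced by $\overline{t_i}$ with the $t_i$ chosen greedily as the largest admissible unbarred letters — and, in the reverse direction, that starting from an arbitrary KN column $D^j$ its split pair lies in the image. This is precisely the content that the excerpt defers to Sections \ref{coll} and \ref{colr}, and I expect the two crucial lemmas proved there to be: (i) a statement that the greedy path through the ``left'' factor $\Gamma_l(k)$ realizes the split operation $C\mapsto(rC,lC)$, and (ii) a statement that the constraints coming from the quantum Bruhat graph edges (in particular the sign conditions in Proposition \ref{critc}(2),(3) and the impossibility of the quantum edge $(i,\overline{\jmath})$ noted at the end of that proof) are equivalent to the defining no-pair condition for KN columns. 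The delicate interplay between the circular order $\prec$, the two-sided nature of the split columns, and the sign-change reflection is where the type $C$ argument departs substantially from type $A$, and it is why the proof cannot be completed in this section alone.

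Finally, granting the results of Sections \ref{coll} and \ref{colr}, the theorem follows by assembling these pieces exactly as in the proofs of Proposition \ref{chaina} and Theorem \ref{bija}: injectivity of $f$ on $\overline{\mathcal F}(\mu)$ plus the image characterization identifies $f(\overline{\mathcal F}(\mu))$ with fillings satisfying the adjacency conditions, and the ${\rm ord}$ map then biject\-ively matches these with $B_\mu$ via the uniqueness statement analogous to Remark \ref{remcond}, since each sorted pair is the split form of a unique KN column.
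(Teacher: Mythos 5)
Your proposal follows essentially the same route as the paper: Theorem \ref{bijc} is reduced to a type $C$ analogue of Proposition \ref{chaina} (injectivity of $f$ on $\overline{\mathcal F}(\mu)$ plus a characterization of its image by column-adjacency conditions and the split-KN condition), which in turn rests on two greedy path-reconstruction results for the two factors of $\Gamma(k)$ proved separately (Propositions \ref{proplc} and \ref{proprc}), exactly as you anticipate. One small correction to your bookkeeping: it is the \emph{right} factor $\Gamma_r(k)$ (i.e.\ $T^j_r$, since $\pi^j_l=\pi^j_rT^j_r$) that governs the passage between the two columns of a split pair and hence encodes the splitting $D\mapsto(rD,lD)$ of Definition \ref{defkn}, while the left factor $\Gamma_l(k)$ governs the transition between consecutive tensor factors and requires only Condition \ref{cond1}$'$.
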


\begin{remarks}\label{remroots} (1) We can make essentially the same remarks as in type $A$, i.e., Remarks \ref{remfilla}. In other words, the above bijection will be realized as a crystal isomorphism in a future publication; on another hand, its restriction to those $(w,J)$ which yield paths in the Bruhat graph is a bijection to Kashiwara-Nakashima tableaux of shape $\mu$ with entries in $[\overline{n}]$. 

(2) There are two new features in the proof of Theorem \ref{bijc}, compared to its type $A$ counterpart, namely Theorem \ref{bija}. The obvious one is related to the ``doubled columns'' of the fillings in the image of the map $f$. A more subtle new feature is that the reflections in the segment $\Gamma_{ki}$ of $\Gamma_l(k)$ can change any of the first $i$ entries of a signed permutation (see the figure at the beginning of Section \ref{coll}).  By contrast, in type $A$, the reflections in the $i$th row of $\Gamma(k)$ in its display as a matrix (\ref{omegakchain}) only change the $i$th entry among the first $k$ entries of a permutation, cf. Algorithm \ref{algchain}. Note that the mentioned segment of $\Gamma(k)$ is the tail of $\Gamma_{ki}$ (upon identifying the type $A$ roots/reflections with the corresponding ones in type $C$). These two features add complexity to the proof of Theorem \ref{bijc}.
\end{remarks}

In type $C$, we will refer to a column as an increasing sequence of elements in $[\overline{n}]$ containing no pair $i\overline{\imath}$. Recall Conditions \ref{cond1} and \ref{cond2} on a pair of adjacent columns $C'C$ filled with elements in $[n]$. These conditions can be extended in a straightforward way to type $C$ columns, and they are still equivalent. We will refer to them as Conditions \ref{cond1}$'$ and \ref{cond2}$'$, respectively. 

\begin{remark}\label{remcondc} Let $\tau$ be a filling in $B_\mu$, represented with split columns (as a filling of the shape $2\mu$). Essentially the same algorithm as in Remark \ref{remcond} constructs the unique filling $\sigma$ satisfying the following conditions: (i) the first column of $\sigma$ (of height $\mu_1'$) is increasing, (ii) the adjacent columns of $\sigma$ satisfy Condition \ref{cond1}$'$, and (iii) ${\rm ord}(\sigma)=\tau$.
\end{remark} 

The proof of Theorem \ref{bijc} consists essentially of the following result.

\begin{proposition}\label{chainc}
The restriction of the filling map $f$ to $\overline{\mathcal F}(\mu)$ is injective. The image of this restriction consists of all fillings $C_r^{\mu_1}C_l^{\mu_1}\ldots C_r^1C_l^1$ of the shape $2\mu$ with integers in $[\overline{n}]$ satisfying the following conditions: {\rm (i)} $C_l^1$ is  increasing; {\rm (ii)} $({\rm ord}(C_r^j),{\rm ord}(C_l^j))=(rD^j,lD^j)$ for some KN column $D^j$, for all $j$; {\rm (iii)} any two adjacent columns satisfy Condition {\rm \ref{cond1}}$'$. 
\end{proposition}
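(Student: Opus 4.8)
The plan is to prove Proposition~\ref{chainc} by closely following the strategy used for Proposition~\ref{chaina} in type $A$, while accounting for the two new features flagged in Remarks~\ref{remroots}(2): the doubled columns and the fact that reflections in $\Gamma_{ki}$ can alter any of the first $i$ entries. The overall architecture should again be: (a) show that every filling in the image of $f|_{\overline{\mathcal{F}}(\mu)}$ satisfies conditions (i)--(iii); (b) show conversely that every filling satisfying (i)--(iii) has a unique preimage, by reconstructing the folding pair $(w,T)$ through a greedy path-building algorithm analogous to Algorithm~\ref{algchain}. Condition (iii) and the increasing first column are handled exactly as in type $A$ via the quantum Bruhat graph criterion (now Proposition~\ref{critc}) together with Lemma~\ref{chconst}$'$-type statements; the genuinely new content is condition (ii), which asserts that the ordered pair $({\rm ord}(C_r^j),{\rm ord}(C_l^j))$ is the split of a KN column.

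First I would establish the forward direction. A folding pair $(w,T)$ in $\overline{\mathcal{F}}(\mu)$ yields, via Proposition~\ref{condqbg}, a path in the quantum Bruhat graph of type $C$, and the passage from $\pi^{j}_r$ to $\pi^{j}_l$ (using the factor $T^j_r$) followed by $\pi^j_l$ to $\pi^{j-1}_r$ (using $T^j_l$) produces the two halves $C_r^j$ and $C_l^j$. Applying a type-$C$ analogue of Lemma~\ref{chconst}(1) to each segment shows that adjacent columns satisfy Condition~\ref{cond1}$'$, giving (iii), and that $C_l^1$ is increasing, giving (i). For (ii), I would need to prove that the quantum-Bruhat-path constraints internal to a single $\Gamma(k)=\Gamma_r(k)\Gamma_l(k)$ force $({\rm ord}(C_r^j),{\rm ord}(C_l^j))$ to be a split column in the sense of Definition~\ref{defkn}; this is precisely the statement that will be isolated and proved in Sections~\ref{coll} and~\ref{colr}, so at this stage I would cite those two results as the crucial inputs and defer their proofs.

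For the reverse direction, I would run the reconstruction algorithm: given $\sigma=C_r^{\mu_1}C_l^{\mu_1}\ldots C_r^1C_l^1$ satisfying (i)--(iii), build the path in the quantum Bruhat graph from right to left, column by column, and within each column entry by entry, using the type-$C$ version of Lemma~\ref{chconst}(2) to determine the unique sequence of reflections realizing each transition. The augmentation device of Remark~\ref{remcondc} (appending the identity column so that Condition~\ref{cond2}$'$ on the last pair forces increasingness) handles the boundary. The new subtlety is that within $\Gamma_{ki}$ the reflections involve barred indices and can touch several of the first $i$ positions, so the greedy procedure must thread through the three edge-types of Proposition~\ref{critc}; establishing that this still produces a \emph{unique} reflection sequence, and that exactly the pairs passing through a split column arise, is where the machinery of Sections~\ref{coll} and~\ref{colr} is again invoked.

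The main obstacle will be exactly the characterization of split columns via quantum Bruhat paths, i.e.\ proving that condition (ii) is equivalent to the internal consistency of the path through $\Gamma(k)$. The difficulty stems from the asymmetry noted in Remark~\ref{remroots}(2): unlike type $A$, where each row of $\Gamma(k)$ affects a single entry and the greedy algorithm decouples cleanly, the segment $\Gamma_{ki}$ can modify any of the first $i$ entries, so the left-to-right reconstruction of $C_r^j$ and $C_l^j$ does not factor through independent one-entry moves. Controlling how the barred transpositions $(i,\overline{\jmath})$ and $(i,\overline{\imath})$ interact with the splitting procedure of Definition~\ref{defkn}—in particular matching the greedily chosen target values to the letters $t_i$ in the split—is the heart of the argument, and is deliberately quarantined into the two supporting sections. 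I would therefore present the present proof as reducing Proposition~\ref{chainc} to those two results, noting that everything else parallels the type~$A$ proof verbatim.
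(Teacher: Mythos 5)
Your proposal is correct and follows essentially the same route as the paper: the paper's proof of Proposition~\ref{chainc} is exactly the reduction you describe, iterating from the identity permutation column by column (right to left), alternately invoking Proposition~\ref{proplc} for each $C_l^j$ and Proposition~\ref{proprc} for each $C_r^j$, with uniqueness of the paths giving injectivity and the ``if and only if'' characterizations giving the description of the image. Your identification of the split-column condition (ii) as the content quarantined into Sections~\ref{coll} and~\ref{colr} matches the paper precisely.
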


The proof of Proposition \ref{chainc} is based on the following two results, which will be proved in Sections \ref{coll} and \ref{colr}, respectively. Recall the definition of the sequences of roots $\Gamma_l(k)$ and $\Gamma_r(k)$ from Section \ref{setc}. The reverse of a sequence $S$ is denoted by $\rev(S)$. 

\begin{proposition}\label{proplc} Consider a signed permutation $u$ in $B_n$, the column $C:=u[1,k]$, and another column $C'$ of height $k$. The pair of columns $C'C$ satisfies Condition {\rm \ref{cond1}$'$} if and only if there is a path $u=u_0,u_1,\ldots,u_p=v$ in the corresponding quantum Bruhat graph such that $v[1,k]=C'$ and the edge labels form a subsequence of $\rev(\Gamma_l(k))$. Moreover, the mentioned path is unique, and we have 
\begin{equation}\label{monotc}C(i)=u_0(i)\preceq u_1(i)\preceq \ldots\preceq u_p(i)=C'(i)\,, \;\;\;\;\;\mbox{for $i=1,\ldots,k$}\,.\end{equation} 
\end{proposition}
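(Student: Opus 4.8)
The statement establishes a bijection-with-monotonicity between the pairs of columns $C'C$ satisfying Condition~\ref{cond1}$'$ and the quantum Bruhat paths from $u$ whose labels come from $\rev(\Gamma_l(k))$. I would prove this by constructing the path explicitly via a greedy algorithm, exactly paralleling the type $A$ construction in Algorithm~\ref{algchain} and Lemma~\ref{chconst}, but accounting for the two complications flagged in Remark~\ref{remroots}(2): the doubled columns (hence two passes, one through $\Gamma_{l}(k)$ built from the $\Gamma_{ki}$ blocks) and the fact that a reflection in $\Gamma_{ki}$ can alter \emph{any} of the first $i$ entries of a signed permutation, not just the $i$th. My strategy is to process the entries of $C'$ from bottom to top (i.e.\ $i=k,k-1,\ldots,1$), reading off the relevant sub-block $\Gamma_{ki}$ of $\rev(\Gamma_l(k))$ for each $i$, and to show that Condition~\ref{cond1}$'$ on $C'C$ is precisely the obstruction-free condition guaranteeing that a quantum Bruhat edge exists at each step, via the edge criteria of Proposition~\ref{critc}.

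\textbf{Key steps in order.} First I would set up the greedy procedure: fix $i$ from $k$ down to $1$, and note that the tail of $\Gamma_{ki}$ coincides (under the identification of type $A$ roots with type $C$ roots) with the $i$th row of the type $A$ chain $\Gamma(k)$, so the type $A$ analysis of Lemma~\ref{chconst}(2) applies to the ``unbarred'' portion. Second, I would handle the genuinely new part of $\Gamma_{ki}$, namely the initial segment $(1,\overline{\imath}),\ldots,(i-1,\overline{\imath})$ together with the reflection $(i,\overline{\imath})$ and the barred transpositions $(i,\overline{k+1}),\ldots,(i,\overline{n})$, using the criteria in Proposition~\ref{critc}(2) and~(3): here I must track how the entry being brought into position $i$ can pass through a sign change, and verify via Proposition~\ref{critc} that each intended transposition is a legal quantum Bruhat edge exactly when no intermediate value $w(k)$ lies strictly between the source and target in the circular order $\prec$. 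Third, I would establish the monotonicity~(\ref{monotc}): at each step the entry in position $i$ moves strictly forward in the circular order $\prec_{C(i)}$, which shows the partial values $u_0(i)\preceq\cdots\preceq u_p(i)$ increase monotonically to $C'(i)$, exactly as~(\ref{monota}) does in type $A$. Fourth, uniqueness: I would argue, as in the type $A$ proof, that any deviation from the greedy choice forces a transposition of two values ``across'' a third value (Remark~\ref{across} extended via Proposition~\ref{critc}), violating the quantum Bruhat criterion; this pins down the path and hence gives injectivity. Finally, I would verify the equivalence with Condition~\ref{cond1}$'$ in both directions: the forward direction shows that an existing path forces the two forbidden patterns of~(\ref{cond12}) to fail (as in Lemma~\ref{chconst}(1)), and the reverse direction shows that Condition~\ref{cond1}$'$ supplies exactly the data needed for the greedy algorithm to run to completion.

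\textbf{The main obstacle.} The hard part will be the new feature from Remark~\ref{remroots}(2): that reflections in $\Gamma_{ki}$ can change any of the first $i$ entries, not merely the $i$th. In type $A$, each row of $\Gamma(k)$ touches only a single fixed position among the first $k$, so the entries are filled in independently and the bookkeeping is clean. In type $C$, the initial segment $(1,\overline{\imath}),\ldots,(i-1,\overline{\imath})$ of $\Gamma_{ki}$ acts on positions $1,\ldots,i-1$ as well, so I cannot simply treat position $i$ in isolation; I must show that these ancillary reflections either are inactive (not selected into the path) or, when active, are precisely the edges needed to relocate signs correctly while preserving both Condition~\ref{cond1}$'$ on the already-constructed higher rows and the quantum Bruhat edge criterion. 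Managing the interaction between the barred reflections (governed by Proposition~\ref{critc}(2) and~(3), with their sign constraints $\sign(w(i))=\sign(w(\overline{\jmath}))$ and the exclusion of quantum up-edges of type $(i,\overline{\jmath})$) and the unbarred tail (governed by Proposition~\ref{critc}(1)) is where the essential care is required, and I expect the bulk of the argument—presumably the content of Section~\ref{coll}—to be devoted to proving that the greedy procedure respects all of these constraints simultaneously and terminates with $v[1,k]=C'$.
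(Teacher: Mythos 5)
Your architecture matches the paper's exactly: the proof in Section~\ref{coll} processes $i=k,k-1,\ldots,1$, and for each $i$ runs a greedy algorithm (Algorithm~\ref{algchainc}) that calls the type~$A$ procedure \emph{path-A} on the unbarred head of $\rev(\Gamma_{ki})$, then handles the sign change and the barred reflections; uniqueness is argued by the ``transposing across'' violation, and monotonicity in the circular order is recorded as in~(\ref{monota}). You also correctly locate the main difficulty in the reflections $(l,\overline{\imath})$, $l<i$, which disturb positions above row $i$. So this is the right plan. However, the plan omits the quantitative ingredients that actually make it work, and these are not routine.

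First, ``greedy'' is underspecified: the unbarred stage must be run with the explicit intermediate target $M(u,i,C'):=\max\,(\{u(i)\}\cup\{u(l): k<l\le n,\ u(i)\prec u(l)\preceq C'(i)\})$ (maximum for $\prec_{u(i)}$), and one needs the counting/matching argument of Lemma~\ref{casetwo} --- which uses Condition~\ref{cond1}$'$ on the rows already fixed --- to show that in the wrap-around case $\overline{C'(i)}\preceq M(u,i,C')\preceq C'(i)$. Without this, you cannot guarantee that after the optional $(i,\overline{\imath})$ step the remaining edges labeled $(i,\overline{m})$ and $(l,\overline{\imath})$ are genuine Bruhat covers; recall from Proposition~\ref{critc}(2) that such roots admit \emph{no} quantum edges, so if the entry in position $i$ overshoots, the path simply does not exist. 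Second, your iteration over $i$ is only well-founded if you prove the analogue of~(\ref{monotc2}): that the side effects of the stage of reflections $(l,\overline{\imath})$ move each entry in position $l<i$ forward in $\prec_{C(l)}$ but not past $C'(l)$, preserving its sign, so that $C'C_{i+1}$ still satisfies Condition~\ref{cond1}$'$ at the next iteration. The paper proves this by a separate ``too few available values'' counting contradiction at the end of the proof of Proposition~\ref{baseconstr}; your proposal flags the issue but supplies no mechanism. Third, the necessity direction is not a transcription of Lemma~\ref{chconst}(1): because entries can migrate among positions $1,\ldots,i$ during the process, one needs Lemma~\ref{moveup} (tracking where a displaced entry can end up) and a further argument ruling out that a value $b$ with $C(i)\prec b\prec C'(i)$ ever becomes reachable for position $l$. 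These three items are the substance of Section~\ref{coll}; as written, your proof defers all of them.
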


\begin{proposition}\label{proprc} Consider a signed permutation $u$ in $B_n$, the column $C:=u[1,k]$, and another column $C'$ of height $k$. The pair of columns $C'C$ satisfies Condition {\rm \ref{cond1}$'$} and $({\rm ord}(C'),{\rm ord}(C))=(rD,lD)$ for some KN column $D$ if and only if there is a path $u=u_0,u_1,\ldots,u_p=v$ in the corresponding quantum Bruhat graph such that $v[1,k]=C'$ and the edge labels form a subsequence of $\rev(\Gamma_r(k))$. 
Moreover, the mentioned path is unique and, for each $i=1,\ldots,k$, we have the following weakly increasing sequence in $[n]$ or $[\overline{n}]\setminus [n]$:
\begin{equation}\label{monotcr}C(i)=u_0(i)\le u_1(i)\le \ldots\le u_p(i)=C'(i)\,.\end{equation}
\end{proposition}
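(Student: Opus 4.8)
The strategy is to mirror the architecture of Proposition~\ref{proplc}, which handles the ``left'' segment $\Gamma_l(k)$, and adapt it to the ``right'' segment $\Gamma_r(k)=\Gamma_2\ldots\Gamma_k$, where $\Gamma_i=((1,\overline{\imath}),\ldots,(i-1,\overline{\imath}))$. The key structural observation is that every root appearing in $\Gamma_r(k)$ is of the form $(a,\overline{b})$ with $1\le a<b\le k$; these are exactly the reflections $s_{\varepsilon_a+\varepsilon_b}$, which change \emph{signs} but, by Proposition~\ref{critc}(2), can only be applied in the quantum Bruhat graph as genuine (upward) Bruhat edges, never as quantum down-edges. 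Thus the path $u=u_0,\ldots,u_p=v$ produced here is a saturated chain in ordinary Bruhat order, and the monotonicity~(\ref{monotcr}) is governed by these sign-raising moves rather than the circular-order moves of the left column. I would first establish, as in the greedy construction behind Lemma~\ref{chconst} and Algorithm~\ref{algchain}, a \emph{forward direction}: starting from $u$ with $C=u[1,k]$, I process the roots of $\mathrm{rev}(\Gamma_r(k))$ in order and apply exactly those reflections permitted by the criterion in Proposition~\ref{critc}(2), showing that the resulting $v[1,k]$ has a first column $C'$ such that $({\rm ord}(C'),{\rm ord}(C))$ is a split KN column and $C'C$ satisfies Condition~\ref{cond1}$'$.

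\textbf{Key steps.} First I would pin down precisely which entries each root in $\Gamma_r(k)$ can touch: since $\Gamma_i$ consists of roots $(a,\overline{\imath})$ with $a<i\le k$, passing through $\Gamma_i$ lets us raise entries in positions $1,\ldots,i-1$ by pairing them with (the value currently in) position $i$, i.e.\ by converting a pair $(z,\overline{z})$-type configuration into a split configuration $(z,t)$ with $t$ an unbarred ``filler'' letter. Here I expect the combinatorial content of Definition~\ref{defkn} (the splitting procedure producing $J=\{t_1>\ldots>t_r\}$ from $I=\{z_1>\ldots>z_r\}$) to match \emph{exactly} the sequence of sign-changes forced by the edges of $\Gamma_r(k)$; this is the heart of condition~(ii) in Proposition~\ref{chainc}, and verifying this correspondence is the main technical work. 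Second, I would prove monotonicity~(\ref{monotcr}): because each admissible edge $(a,\overline{b})$ requires $w(a)<w(\overline{b})$ by Proposition~\ref{critc}(2), the value in each fixed position only increases along the path, and the hypothesis $\mathrm{sign}(w(a))=\mathrm{sign}(w(\overline{b}))$ keeps each entry within $[n]$ or within $[\overline{n}]\setminus[n]$ throughout, so there is no sign flip mid-sequence. Third, for the reverse direction and uniqueness, I would argue as in the proof of Lemma~\ref{chconst}(2): any admissible path is forced to be the greedy one, since choosing a later (or earlier) applicable root would require, at some later step, transposing a value across an intervening value in violation of the Proposition~\ref{critc}(2) criterion (the type-$C$ analogue of Remark~\ref{across}); hence the path is unique.

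\textbf{The main obstacle.} The hard part will be the exact matching between the sign-changing reflections admissible along $\Gamma_r(k)$ and the split-column data $(rD,lD)$ of Definition~\ref{defkn}. In the left-column Proposition~\ref{proplc} the target $C'$ is an arbitrary column and the analysis reduces to circular-order monotonicity; but here the \emph{output} must be constrained to be precisely the right half of a split KN column, and conversely every split KN column must be reachable. I anticipate needing to show that the greedy admissible sign-raisings produce, position by position, exactly the filler letters $t_1>t_2>\ldots$ characterized by the ``greatest available'' conditions in Definition~\ref{defkn} (namely $t_i<\min(t_{i-1},z_i)$ with $t_i,\overline{t_i}\notin C$). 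This is essentially a verification that the Bruhat-length constraint from~(\ref{lengthb}) and formula~(2) in the proof of Proposition~\ref{critc} force the ``greatest possible'' choice at each step, which is the KN splitting rule in disguise. The subtlety flagged in Remark~\ref{remroots}(2)---that a reflection in $\Gamma_{ki}$ can alter any of the first $i$ entries---does not affect $\Gamma_r(k)$ directly (that phenomenon belongs to $\Gamma_l(k)$), but it is precisely why the two segments are treated in separate propositions; I would keep the $\Gamma_r$-analysis self-contained and defer all interaction between the two halves to the assembly of Proposition~\ref{chainc}.
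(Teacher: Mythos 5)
Your structural observations are sound and match the paper's: every root in $\Gamma_r(k)$ is of the form $(a,\overline{b})$, Proposition \ref{critc}(2) shows such a reflection can only be an upward Bruhat edge (never a quantum edge), the path is therefore produced by the same greedy mechanism as in Proposition \ref{proplc} (indeed $\Gamma_r(k)$ is a subchain of $\Gamma_l(k)$, so the necessity of Condition \ref{cond1}$'$ and the uniqueness argument carry over), and the sign-preservation in the criterion yields the monotonicity (\ref{monotcr}). But there is a genuine gap exactly where you flag ``the main technical work'': you never supply a mechanism for the equivalence between reachability along $\rev(\Gamma_r(k))$ and the condition that $({\rm ord}(C'),{\rm ord}(C))=(rD,lD)$ for a KN column $D$. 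The assertion that ``the Bruhat-length constraint forces the greatest possible choice at each step, which is the KN splitting rule in disguise'' is a heuristic, not an argument; the greedy path construction targets a \emph{given} $C'$ and does not by itself produce the filler letters $t_1>\ldots>t_r$ of Definition \ref{defkn}. The paper fills this gap in three steps: it first characterizes reachability by explicit conditions on the pair $C'C$ (same multiset of absolute values; entrywise increase within $[n]$ or within $[\overline{n}]\setminus[n]$; emptiness of an ``interval set'' ${\rm int}(C,C')$ recording values over which transpositions would have to pass in positions $k+1,\ldots,\overline{k+1}$) --- this is Lemma \ref{lem1}; it then proves these conditions are invariant under sorting the columns (Lemma \ref{lem2}); and finally it shows, via the operator ${\rm maxcol}$ and the decomposition (\ref{eqlem3}) of Lemma \ref{lem3}, that for \emph{sorted} columns these conditions are equivalent to the recursive ``greatest available letter'' rule defining the split column (Proposition \ref{lem4}). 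Each of these is a separate combinatorial argument absent from your outline.

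The second, smaller omission is the sorted-versus-unsorted issue. The columns $C$ and $C'$ are windows of signed permutations and are generally not increasing, whereas Definition \ref{defkn} is stated for sorted columns; so even granting a characterization of reachability, you must transfer it across the map ${\rm ord}$. This is not automatic: the paper's Lemma \ref{lem2}(2) requires a matching argument (pairing each $C(i)$ with $C'(i)$ and analyzing how the matching changes under reordering) to show that the entrywise and interval conditions hold for $C'C$ if and only if they hold for ${\rm ord}(C'){\rm ord}(C)$. Finally, a point of exposition: your ``forward direction'' as written (apply \emph{all} reflections permitted by the criterion and see what $C'$ comes out) does not match the quantifier structure of the statement, which fixes $C'$ in advance; the forward implication should instead take an arbitrary path and verify the conditions on the resulting pair, with the greedy construction reserved for existence and uniqueness in the converse.
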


\begin{proof}[Proof of Proposition {\rm \ref{chainc}}] Consider a filling $\sigma=C_r^{\mu_1}C_l^{\mu_1}\ldots C_r^1C_l^1$ satisfying  conditions (i)-(iii), and let $u$ be the identity permutation. Apply Proposition \ref{proplc} to $u$ and the column $C_l^1$; then set $u$ to the output signed permutation $v$, and apply Proposition \ref{proprc} to $u$ and the column $C_r^1$. Continue in this way, by considering the columns $C_l^2,C_r^2,\ldots,C_l^{\mu_1},C_r^{\mu_1}$, while each time setting $u$ to the previously obtained $v$. This procedure constructs the unique folding pair $(w,T)$ mapped to $\sigma$ by the filling map $f$ (note that $T$ is constructed in reverse, and $w$ is the last $v$). Viceversa, a similar application of the two propositions shows that any filling in the image of $f$ satisfies conditions (i)-(iii).
\end{proof}

\begin{proof}[Proof of Theorem {\rm \ref{bijc}}] This is now immediate, based on Remark \ref{remcondc} and Proposition \ref{chainc}.
\end{proof}

\subsection{The construction of the type $C$ charge} The results in the previous section allow us to translate the computation of the statistic $\level(w,T)$ in the Ram-Yip formula (\ref{ryform}) into an algorithm which is similar to the one for the type $A$ charge. This translation is analogous to the corresponding one in type $A$, which is explained in detail in Section \ref{redcha}; therefore, below we concentrate on the new features in type $C$. 

Consider a filling $\tau$ in $B_\mu$, represented with split columns (as a filling of the shape $2\mu$); its columns are labeled from longest to shortest by $1,1',2,2',\ldots$. Consider the preimages of $\tau$ under the bijections in (\ref{compc}), as follows: 
\begin{equation}\label{mapc}(w,T)\mapsto \sigma=f(w,T) \mapsto \tau={\rm ord}(\sigma)\,.\end{equation}
Define the charge word $\cw(\tau)$ of $\tau$ by analogy with type $A$, as the biword containing a biletter $\binom{k}{j}$ for each entry $k$ in column $j$ of $\tau$; here $j$ and $k$ belong to the alphabets $\{1<1'<2<2'<\ldots\}$ and $[\overline{n}]$, respectively. We order the biletters as in the type $A$ case (in the decreasing order of the $k$'s, and for equal $k$'s, in the decreasing order of $j$'s), and define $\cw_2(\tau)$ in the same way (as the word formed by the lower letters $j$). 

The algorithm mentioned in Remark \ref{remcondc} for reconstructing $\sigma$ from $\tau$ can be rephrased in terms of $\cw_2(\tau)$, as explained below; we will refer to this rephrasing as the charge algorithm. We start by scanning $\cw_2(\tau)$ from right to left and by selecting the entries $1,1',2,2',\ldots,\mu_1,(\mu_1)'$ in this order, according to the same rule as in type $A$: always pick the first available entry to the left, but if the desired entry is not available then scan the word from its right end once again. As in the proof of Theorem \ref{levch}, we can see that the sequence of top letters paired with $1,1',2,2',\ldots,\mu_1,(\mu_1)'$ is the first row of the filling $\sigma$ (read from right to left). We then remove the selected entries from $\cw_2(\tau)$ and repeat the above procedure, which will now give the other rows of $\sigma$, from top to bottom. Note that, by Proposition \ref{proprc}, we always go left from $j$ to $j'$, but we can go right from $j'$ to $j+1$.

\begin{example}\label{exchc}{\rm Consider the following tensor product of KN columns
\[\tableau{{1}\\{3}\\{\overline{3}}}\otimes \tableau{{3}\\{\overline{4}}\\{\overline{3}}}\otimes \tableau{{\overline{5}}\\{\overline{3}}\\{\overline{2}}\\{\overline{1}}}\,.\]
This is represented with split columns as the following filling $\tau$ of the shape $2\mu=(6,6,6,2)$:
\[\tableau{{3'}&{3}&{2'}&{2}&{1'}&{1}\\ \\{1}&{1}&{3}&{2}&{\overline{5}}&{\overline{5}}\\{3}&{2}&{\overline{4}}&{\overline{4}}&{\overline{3}}&{\overline{3}}\\
{\overline{2}}&{\overline{3}}&{\overline{2}}&{\overline{3}}&{\overline{2}}&{\overline{2}}\\
&&&&{\overline{1}}&{\overline{1}}
}\,,\]
where the top row consists of the column labels. The corresponding filling $\sigma$ is
\[\tableau{{\overline{2}}&{\overline{3}}&{\overline{4}}&{\overline{4}}&{\overline{5}}&{\overline{5}}\\{1}&{1}&{\mathbf{\overline{2}}}&{\overline{3}}&{\overline{3}}&{\overline{3}}\\
{{3}}&{{2}}&{\mathbf{3}}&{{2}}&{\mathbf{\overline{2}}}&{\overline{2}}\\
&&&&{\overline{1}}&{\overline{1}}
}\,.\] 
Meanwhile, the corresponding charge word $\cw(\tau)$, with the order on the biletters indicated above, is
\[\left(\begin{array}{cccccccccccccccccccc}\overline{1}&\overline{1}&\overline{2}&\overline{2}&\overline{2}&\overline{2}&\overline{3}&\overline{3}&\overline{3}&\overline{3}&\overline{4}&\overline{4}&\overline{5}&\overline{5}&3&3&2&2&1&1\\
1'_4&1_4& 3'_1&2'_2&1'_3&1_3&  3_1&2_2&1'_2&1_2&  2'_1&2_1&  1'_1&1_1&  3'_3&2'_3&  3_3&2_3&  3'_2&3_2
\end{array}\right)   \,.\] 
The index attached to a lower letter is the number of the iteration in which the given letter is
selected by the charge algorithm.
}
\end{example}

Now let us refer to the computation of $\level(w,T)$, for the folding pair $(w,T)$ in (\ref{mapc}). By Proposition \ref{critc}, the set $T^-$ consists only of roots $(i,m)$ or $(i,\overline{\imath})$. Such a root $(i,\cdot)=\beta_s$ (where $s$ is the position of the root in the fixed $\mu$-chain $\Gamma$) must lie in $T^{j+1}_l$ for some $j$. This means that there is a descent in column $C^{j}_r$, labeled $j'$, and row $i$ of $\sigma=C^{\mu_1}_rC^{\mu_1}_l\ldots C^{1}_rC^{1}_l$ (descents are defined as usual, cf. Section \ref{redcha}); in other words, $C_{r}^{j}(i)>C_l^{j+1}(i)$. In fact, the above correspondence is a bijection between $T^-$ and the set ${\rm Des}(\sigma)$ of descents in $\sigma$, by Proposition \ref{proplc} and in particular (\ref{monotc}). Moreover, the affine level $l_s$ of the root $(i,\cdot)=\beta_s$ indicated above (see Section \ref{alcovewalks}) is half the arm length of the corresponding descent, cf. the proof of Theorem \ref{levch}. Finally, the descents in $\sigma$ correspond to the steps to the right in the charge algorithm applied to $\cw_2(\tau)$. This leads to the following definition of the type $C$ charge.

\begin{definition} Consider a word $w$ with letters in the alphabet $1,1',2,2',\ldots$, containing as many letters $j$ as $j'$, and at least as many letters $j$ as $j+1$. Apply the charge algorithm to $w$, and assume that a selected entry $j'$ is always to the left of the previously selected $j$. Let $\charge(w)$ be the sum of $k-j$ for each selected entry $j+1$ to the right of the previously selected $j'$, where the selected entries in the given iteration are $1,1',\ldots,k,k'$. 
\end{definition}

The above discussion leads to the following result.

\begin{theorem}\label{levchc} We have
\begin{equation}\label{chccomp}\level(w,T)=\frac{1}{2}\sum_{c\in{\rm Des}(\sigma)}{\rm arm}(c)=\charge(\cw_2(\tau))\,.\end{equation}
\end{theorem}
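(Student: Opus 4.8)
The plan is to prove the two equalities in (\ref{chccomp}) separately, establishing the chain $\level(w,T)=\frac{1}{2}\sum_{c\in\mathrm{Des}(\sigma)}\mathrm{arm}(c)=\charge(\cw_2(\tau))$, following closely the template of the proof of Theorem~\ref{levch}, but accounting for the two new type $C$ features identified in Remarks~\ref{remroots}. For the first equality, recall that $\level(w,T)=\sum_{j\in J^-}l_j$. By Proposition~\ref{critc}, a negative folding can only arise from a root of the form $(i,m)$ or $(i,\overline{\imath})$, and by Proposition~\ref{proplc} — in particular the monotonicity (\ref{monotc}) for the passage through $\Gamma_l(k)$ — such a root lies in some segment $T^{j+1}_l$ and corresponds exactly to a descent $C_r^j(i)>C_l^{j+1}(i)$ in $\sigma$. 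First I would verify that this assignment is a bijection between $T^-$ and $\mathrm{Des}(\sigma)$: the uniqueness of the path in Proposition~\ref{proplc} guarantees that each descent produces one and only one negative folding, while the weakly increasing sequence (\ref{monotc}) rules out spurious foldings (a negative folding occurs precisely when the $i$th entry strictly decreases, which happens iff there is a descent).

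The next step is to compute the affine level $l_s$ of the negative folding root $(i,\cdot)=\beta_s$ and show it equals half the arm length of the corresponding descent cell. Here the key is to count occurrences of the root $(i,\cdot)$ in the relevant tail of the $\mu$-chain, exactly as in Theorem~\ref{levch}. The factor of $\frac{1}{2}$ — which is the genuinely new bookkeeping relative to type $A$ — arises because the shape $2\mu$ has doubled columns: the arm length of a descent cell in the doubled picture counts both the $j$-columns and the $j'$-columns to its left, so it is twice the number of $\Gamma$-segments $\Gamma^{j+1},\ldots,\Gamma^{\mu_1}$ contributing occurrences of the root. I would make the form of $\Gamma$ from Section~\ref{setc} explicit enough to see that each relevant segment $\Gamma^{j'}$ contributes exactly one occurrence of $(i,\cdot)$, matching two columns of arm. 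Summing over $J^-$ then yields $\level(w,T)=\frac{1}{2}\sum_{c\in\mathrm{Des}(\sigma)}\mathrm{arm}(c)$.

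For the second equality, I would argue as in the second paragraph of the proof of Theorem~\ref{levch}: the charge algorithm applied to $\cw_2(\tau)$ mirrors the reconstruction of $\sigma$ from $\tau$ via Condition~\ref{cond2}$'$ (Remark~\ref{remcondc}). The $i$th iteration of the selection process recovers the $i$th row of $\sigma$, with the top letters paired to the selected lower letters $1,1',2,2',\ldots$ giving the entries of that row read right to left. The crucial new observation, already flagged before the statement and supplied by Proposition~\ref{proprc}, is that within a split column one always goes \emph{left} from $j$ to $j'$ (never incurring a charge contribution at that step), whereas one may go \emph{right} from $j'$ to $j+1$. Consequently the ``steps to the right'' in the charge algorithm are exactly the transitions $j'\to j+1$, and these correspond bijectively to the descents $C_r^j(i)>C_l^{j+1}(i)$ in row $i$ of $\sigma$; the charge contribution $k-j$ matches the arm-length contribution of that descent. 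Matching the two sums row by row gives $\frac{1}{2}\sum_{c\in\mathrm{Des}(\sigma)}\mathrm{arm}(c)=\charge(\cw_2(\tau))$.

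The main obstacle I anticipate is the careful verification that the descent-to-negative-folding correspondence is genuinely a bijection and that the arm-length factor of $\frac{1}{2}$ is accounted for consistently. Because the reflections in $\Gamma_{ki}$ can alter any of the first $i$ entries of a signed permutation (not just the $i$th, as in type $A$), one must ensure via the monotonicity statements (\ref{monotc}) and (\ref{monotcr}) that no additional negative foldings are hidden among the intermediate transpositions and that each descent is charged once. Propositions~\ref{proplc} and~\ref{proprc} are precisely designed to control this, so the proof reduces to assembling their consequences and tracking the doubled-column arm lengths; the rest is a direct transcription of the type $A$ argument.
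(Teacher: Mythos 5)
Your proposal is correct and follows essentially the same route as the paper: negative foldings are confined to the $\Gamma_l$-segments via Proposition \ref{critc}, matched bijectively with the descents $C_r^j(i)>C_l^{j+1}(i)$ using the monotonicity statements (\ref{monotc}) and (\ref{monotcr}), the affine level is identified with half the arm length through the doubled-column bookkeeping, and the second equality is the type $A$ argument transported to the split-column charge algorithm. The only nitpick is that the reason a negative-folding root $(i,m)$ or $(i,\overline{\imath})$ lies in a segment $T^{j+1}_l$ is simply that such roots occur only in the $\Gamma_l$-portions of the $\mu$-chain (the $\Gamma_r$-portions consist solely of roots $\varepsilon_l+\varepsilon_i$), not Proposition \ref{proplc} itself — a misattribution that does not affect the argument.
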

  
\begin{example}{\rm This is a continuation of Example \ref{exchc}. The entries in the descents of $\sigma$ are shown above in bold. Correspondingly, the charge algorithm applied to $\cw_2(\tau)$ makes one step to the right in the second iteration (from $2'$ to $3$), and two steps to the right in the third iteration (from $1'$ to $2$ and from $2'$ to $3$). Thus, $\charge(\cw_2(\tau))=1+(2+1)=4$. 
}
\end{example}

Let us replace, for simplicity, the notation $\charge(\cw_2(\tau))$ by $\charge(\tau)$. Recall from Section \ref{setcfill} the definition of the content of a filling with entries in $[\overline{n}]$. We obtain the following corollary of Theorem \ref{levchc} and the Ram-Yip formula (\ref{ryform}), based on Proposition \ref{weightmonc}.

\begin{corollary}
In type $C$, we have
\[P_{\mu}(X;q,0)=
\sum_{\tau\in B_\mu}q^{\charge(\tau)}\,x^{\content(\tau)}\,.\]
\end{corollary}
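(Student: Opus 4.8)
The plan is to assemble the final corollary by combining three ingredients already established in the paper: the Ram-Yip formula at $t=0$ (Theorem \ref{hlpthm}), the weight-content identity (Proposition \ref{weightmonc}), and the level-charge identity (Theorem \ref{levchc}), all tied together by the bijection of Theorem \ref{bijc}. The statement to prove is
\[P_{\mu}(X;q,0)=\sum_{\tau\in B_\mu}q^{\charge(\tau)}\,x^{\content(\tau)}\,,\]
so the task is purely a re-indexing of the sum in (\ref{ryform}) from folding pairs to fillings, together with the verification that the monomial and the power of $q$ attached to each folding pair depend only on its image filling and match the claimed statistics.

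First I would start from the Ram-Yip formula specialized to $t=0$, namely (\ref{ryform}), which expresses $P_\mu(X;q,0)$ as a sum over $(w,T)\in\overline{\mathcal F}(\mu)$ of $q^{\level(w,T)}x^{\weight(w,T)}$. Next I would invoke Theorem \ref{bijc}, which asserts that the composite ${\rm ord}\circ f$ is a bijection between $\overline{\mathcal F}(\mu)$ and $B_\mu$; this lets me partition the index set $\overline{\mathcal F}(\mu)$ and re-sum over $\tau\in B_\mu$. For this re-summation to be legitimate, I must check that the summand $q^{\level(w,T)}x^{\weight(w,T)}$ is constant along each fiber of the bijection, i.e. depends only on $\tau={\rm ord}(f(w,T))$ and not on the particular folding pair. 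In fact, since ${\rm ord}\circ f$ is a genuine bijection each fiber is a singleton, so this constancy is automatic once the bijection is in hand; the real content is identifying the exponent and the monomial in terms of $\tau$.

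The identification of the exponent of $x$ proceeds via Proposition \ref{weightmonc}, which gives $\weight(w,T)=\content(f(w,T))$. Here I would note that the content in the sense of Section \ref{setcfill} is invariant under sorting each column increasingly, so $\content(f(w,T))=\content({\rm ord}(f(w,T)))=\content(\tau)$; thus $x^{\weight(w,T)}=x^{\content(\tau)}$. The identification of the exponent of $q$ proceeds via Theorem \ref{levchc}, which yields $\level(w,T)=\charge(\cw_2(\tau))$, and by the abbreviation $\charge(\tau):=\charge(\cw_2(\tau))$ introduced just before the corollary this is exactly $\charge(\tau)$. Substituting both identities into (\ref{ryform}) and reindexing the sum over $\tau\in B_\mu$ via the bijection then gives the claimed formula directly.

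The step I expect to require the most care is the content-invariance under ${\rm ord}$: I must confirm that sorting columns neither creates nor destroys occurrences of any entry $i$ or $\overline{\imath}$, so that the half-difference defining $\content$ is genuinely unchanged when passing from $\sigma=f(w,T)$ to $\tau={\rm ord}(\sigma)$. This is straightforward because ${\rm ord}$ only permutes entries within each column, but it is the one place where one must explicitly use that the content statistic is a multiset invariant rather than an order-dependent one. Everything else is a formal substitution, so once Theorems \ref{hlpthm}, \ref{bijc}, \ref{levchc} and Proposition \ref{weightmonc} are granted, the corollary follows immediately without further combinatorial input.
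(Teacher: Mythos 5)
Your proposal is correct and follows the paper's argument exactly: the corollary is obtained by re-indexing the Ram-Yip sum (\ref{ryform}) through the bijection of Theorem \ref{bijc}, substituting $\weight(w,T)=\content(\tau)$ via Proposition \ref{weightmonc} (with the easy observation that $\content$ is a column-multiset invariant, hence unchanged by ${\rm ord}$) and $\level(w,T)=\charge(\tau)$ via Theorem \ref{levchc}. No gaps; this is the same immediate deduction the paper makes.
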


\section{The proof of Proposition \ref{proplc}}\label{coll}

Much of this proof relies on the structure of the sequence of roots $\rev(\Gamma_{ki})$, which determines the sequence of reflections in the corresponding quantum Bruhat path. It is useful to visualize such sequences as in the figure below. Here we display a signed permutation with its positions labeled by the elements of $[\overline{n}]$. The reflections in the mentioned sequence swap the entries in positions $i$ and $\overline{\imath}$ with entries in other positions in four stages I$-$IV; within a stage, the order of the reflections is indicated by the corresponding arrows.

\pagebreak

\begin{equation}\label{stages} \; \end{equation}

\vspace{-21cm} 
\[\!\!\!\!\!\!\!\!\!\!\!\!\mbox{\includegraphics[scale=0.83]{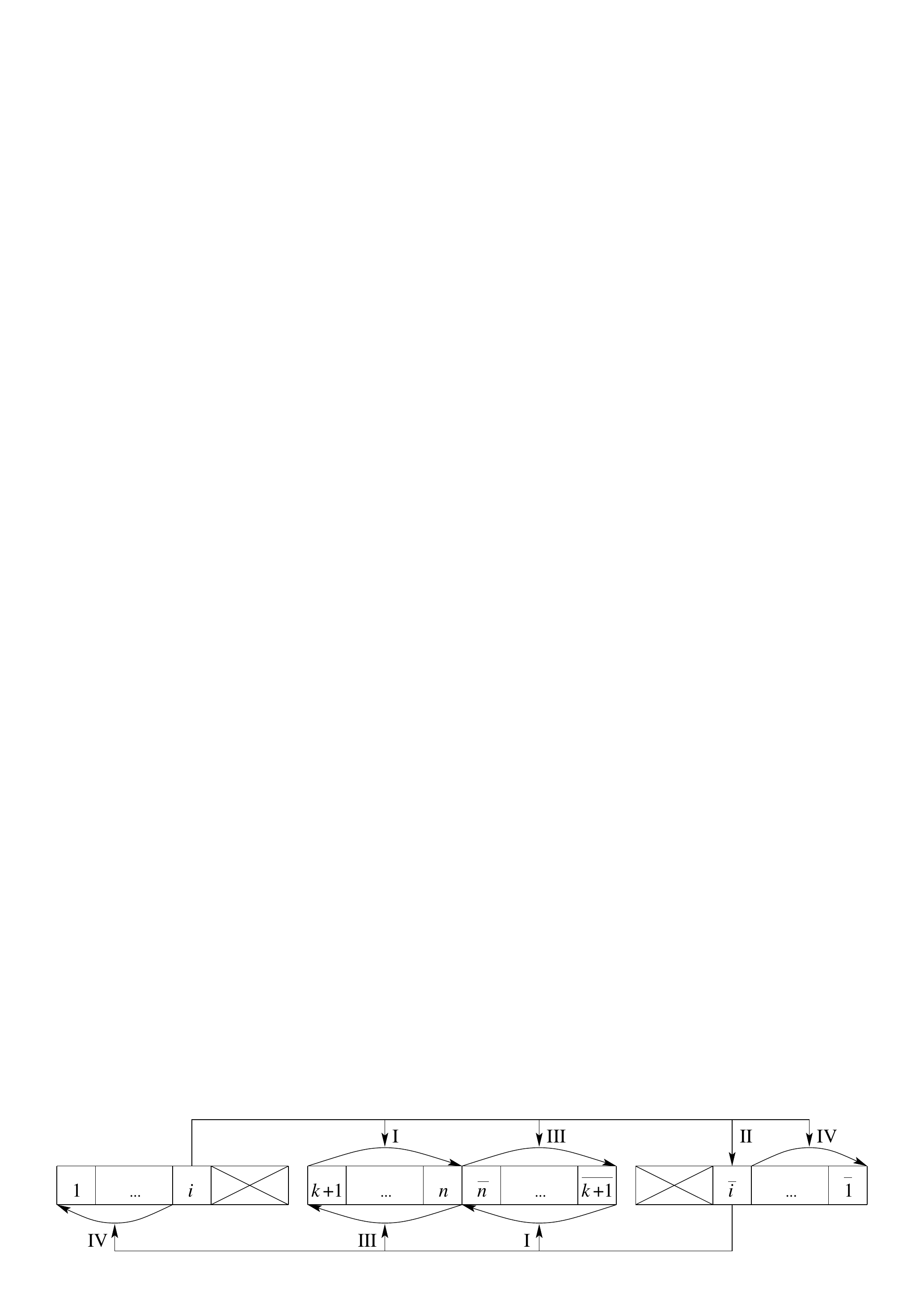}}\]

\subsection{The necessity of Condition \ref{cond1}$'$}\label{nec-cond1}

Consider a signed permutation $u$ in $B_n$ and a subsequence $T$ of $\rev(\Gamma_l(k))$ labeling a path $u=u_0,u_1,\ldots,u_p$ in the quantum Bruhat graph of type $C_n$, where $1\le k\le n$. Assume that (\ref{monotc}) holds. The factorization of $\rev(\Gamma_l(k))$ as $\rev(\Gamma_{kk})\ldots\rev(\Gamma_{k1})$ induces a factorization of $T$ as $T_k\ldots T_1$. Let $\pi_i:=uT_kT_{k-1}\ldots T_i$, for $i=k+1,k,\ldots,1$, cf. the notation in Sections \ref{secta} and \ref{sectc}; in particular, $\pi_{k+1}=u$. Define the corresponding columns $C_i:=\pi_i[1,k]$, and let $C:=C_{k+1}$, $C':=C_1$. 

We start with the following lemma.

\begin{lemma}\label{moveup} Fix $i$ and $l$ with $1\le i<l\le k$. Let $a$ be the entry in position $i$ of the signed permutation obtained at some point in the process of applying to $u$ the reflections in $T_k,$ $T_{k-1},\ldots,T_{l+1}$. Then either $a$ appears in $C_{l+1}[1,i]$ or $\overline{a}$ appears in $C_{l+1}[l+1,k]$.
\end{lemma}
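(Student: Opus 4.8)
The statement tracks a single entry through the partial application of reflections from the initial segments $T_k, T_{k-1}, \ldots, T_{l+1}$ of the sequence $T$, and asserts a dichotomy about where that entry (or its bar) can be found in the column $C_{l+1}$. The plan is to proceed by induction on the number of reflections applied so far, within this initial portion of the path. The key structural input is the explicit shape of the segments $\rev(\Gamma_{km})$ for $m > l$, as visualized in the four-stage figure~(\ref{stages}): each such segment only involves reflections that move entries into or out of positions $m$ and $\overline{m}$ (for $m$ ranging over $k, k-1, \ldots, l+1$), swapping them with entries in the other positions according to the four stages I--IV.

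\textbf{First steps.} First I would set up the induction with the trivial base case: before any reflection from $T_k, \ldots, T_{l+1}$ is applied, the permutation is $u = \pi_{k+1}$, and the claim for position $i$ holds because the entry $a = u(i)$ appears at position $i$ itself, which lies in $C_{k+1}[1,i]$. (Here one must be slightly careful that the lemma refers to $C_{l+1}$, not the current column, so the base and inductive bookkeeping should be stated relative to the fixed target column $C_{l+1}$.) For the inductive step, I would examine a single reflection from some segment $\rev(\Gamma_{km})$ with $l+1 \le m \le k$, and ask how it can alter the entry currently in position $i$. By the structure in~(\ref{stages}), the reflections in $\rev(\Gamma_{km})$ only touch positions $m$ and $\overline{m}$; so position $i < l+1 \le m$ can only be affected by a reflection of the form $(i, \overline{m})$, $(i, m)$, or one passing through positions $i$ and $\overline{m}$ — i.e.\ the entry in position $i$ changes precisely when $i$ is one of the two positions swapped.

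\textbf{Main obstacle and its resolution.} The crux is to verify that whenever the entry in position $i$ is replaced by a new value, the \emph{displaced} value is deposited into a position that keeps the invariant (``appears in $C_{l+1}[1,i]$ or $\overline{a}$ appears in $C_{l+1}[l+1,k]$'') intact, and that the newly arriving value likewise satisfies it. This is where the monotonicity hypothesis~(\ref{monotc}), together with the stagewise order of the reflections, must be used: because the entry in position $i$ only increases in the circular order $\preceq$ as we proceed, and because a reflection out of position $i$ sends the old entry $a$ either up into the top block $[1,i]$ (via a stage that swaps toward smaller positions) or into the barred region where $\overline{a}$ lands in $C_{l+1}[l+1,k]$, the two alternatives of the dichotomy are exactly the two possible fates of a displaced entry. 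I expect the genuinely delicate part to be the careful case analysis of the four stages, confirming that no reflection can move $a$ into a ``forbidden'' position (e.g.\ into $C_{l+1}[i+1,l]$ unbarred, or into the barred block $[\overline{l},\overline{i+1}]$), which would break the invariant. This case analysis will lean directly on the edge criteria of Proposition~\ref{critc} and the precise reading order of $\rev(\Gamma_{km})$ from the figure, and on the fact that all segments $\rev(\Gamma_{km})$ with $m > l$ leave the positions in the range $[l+1, k]$ and their bars available to receive the relevant entries before segment $\rev(\Gamma_{kl})$ is ever reached.
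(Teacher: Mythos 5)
Your skeleton is the paper's: fix the entry $a$ and follow it forward through the remaining reflections, using the four-stage structure of $\rev(\Gamma_{km})$ displayed in (\ref{stages}) to conclude that it can only end up in positions $1,\ldots,i$, or barred in positions $l+1,\ldots,k$. The dichotomy you state as the ``two possible fates of a displaced entry'' is exactly the paper's conclusion.

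The gap is that the step you defer as the ``genuinely delicate'' case analysis is the entire content of the lemma, and the tools you propose for it are not the ones that make it work. Neither the monotonicity (\ref{monotc}) nor the edge criteria of Proposition \ref{critc} enter anywhere: the argument is purely positional, reading off which positions each root of $\rev(\Gamma_{km})$ involves. Concretely, for $l<m\le k$ and $i<l$, the only reflections in $\rev(\Gamma_{km})$ involving a position $<m$ are the stage-IV ones $(i',\overline{m})$ with $i'<m$; there are no reflections $(i,m)$ with $i<m\le k$ in this segment (contrary to the list of possibilities in your inductive step), and ``only touch positions $m$ and $\overline{m}$'' should read ``involve position $\overline{m}$ together with one position $i'<m$, hence also $m$ and $\overline{i'}$.'' Such a reflection either moves $a$ from a position $i'\le i$ to position $\overline{m}$ --- equivalently, deposits $\overline{a}$ at position $m\in[l+1,k]$ --- or moves $a$ from position $\overline{m}$ to a position $i''<i'$, since stage IV of $T_m$ lists the roots $(\cdot\,,\overline{m})$ with strictly decreasing first index; and the later segments $T_{m'}$ with $m'<m$ never involve positions $m,\overline{m}$ at all. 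These three observations maintain your invariant at every step and yield the lemma with no appeal to the quantum Bruhat graph; they are precisely the paper's (terse) proof. As written, your proposal leaves this verification as an expectation and points it toward Proposition \ref{critc}, where it would not be found.
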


\begin{proof} Assume that the letter $a$ gets moved from position $i$ by subsequent reflections, and that $(i,\overline{m})$ in $T_m$ is the first reflection moving it, where $l<m\le k$. Then either $\overline{a}$, which is moved to position $m$, is no longer moved by the next reflections in $T_m,T_{m-1},\ldots,T_{l+1}$, or a subsequent reflection in $T_m$ will move $a$ to a position $i'<i$. The reasoning continues in this way. 
\end{proof}

We now prove the necessity of Condition \ref{cond1}$'$ in Proposition \ref{proplc}.

\begin{proposition}\label{neccond1} The pair of columns $C'C$ satisfies Condition {\rm \ref{cond1}$'$}.
\end{proposition}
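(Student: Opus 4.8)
The goal is to show that for every pair of indices $1\le i<l\le k$, neither of the two statements in Condition \ref{cond1}$'$ holds, namely that $C'(l)\ne C(i)$ and that $C(i)\prec C'(l)\prec C'(i)$ fails (in the circular order $\prec_{C(i)}$). The strategy is to track, step by step, how the entry originally in position $i$ of $C=C_{k+1}$ evolves as the reflections in $T_k,T_{k-1},\dots,T_1$ are applied, using the monotonicity assumption (\ref{monotc}) together with the structural Lemma \ref{moveup}. The key idea is that $C'(i)=u_p(i)$ is reached from $C(i)=u_0(i)$ by a $\prec$-increasing sequence, so the ``window'' of values strictly between $C(i)$ and $C'(i)$ in the circular order is exactly the set of values the $i$th entry passes through, and I must argue that the value $C'(l)$ sitting below it in the final column cannot have landed in that forbidden window.

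\textbf{First step.} I would first treat the equality alternative $C'(l)=C(i)$. Since (\ref{monotc}) gives $C(i)=u_0(i)\preceq\ldots\preceq u_p(i)=C'(i)$, the value $C(i)$ occupies position $i$ initially. I would argue that once $C(i)$ leaves position $i$ (if it does), Lemma \ref{moveup} constrains where it can go: applied with the roles of the indices set appropriately, it shows that a value displaced upward from position $i$ either reappears among the top entries $C_{l+1}[1,i]$ or its bar appears in $C_{l+1}[l+1,k]$. Combining this with the fact that $C'(i)\succeq C(i)$ and that $C'(l)$ lies strictly below $C'(i)$ in the final column (so in particular $l>i$), I would derive that $C'(l)$ cannot equal $C(i)$ without contradicting either the column-strictness of $C'$ or the monotonicity, since $C(i)$ is ``trapped'' above row $i$ or barred-below and cannot resurface in row $l>i$.

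\textbf{Main step.} The substantive part is ruling out $C(i)\prec C'(l)\prec C'(i)$. Here is where the quantum Bruhat graph criterion (Proposition \ref{critc}) and the edge-label structure of $\rev(\Gamma_l(k))$ depicted in the four-stage figure (\ref{stages}) must be used. Whenever a reflection $(i,\cdot)$ in some $T_m$ raises the $i$th entry from a value $a$ to a value $b$, the criterion forbids any ``crossing'' value $c$ with $a\prec c\prec b$ to lie in an intervening position; so each upward step of the $i$th entry sweeps past no forbidden value. I would show that any value $v$ with $C(i)\prec v\prec C'(i)$ that is present in the signed permutation must, by the time the process reaches position $l$, have been swept into position $i$ at some stage (and hence raised out of the range), so it cannot survive as the entry $C'(l)$ in a lower row. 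This is precisely the ``transpose across'' obstruction of Remark \ref{across}: if $C'(l)$ were in the open circular interval $(C(i),C'(i))$, then at the step raising the $i$th entry past it, the edge would violate the criterion in Proposition \ref{critc}. The two-index bookkeeping between stage I--IV reflections acting on positions $i$ and $\overline{\imath}$ is what makes this delicate, and Lemma \ref{moveup} is the tool that keeps displaced values from re-entering the window.

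\textbf{Expected obstacle.} The main difficulty I anticipate is exactly the ``subtle new feature'' flagged in Remark \ref{remroots}(2): unlike type $A$, the reflections in $\Gamma_{ki}$ can alter \emph{any} of the first $i$ entries, not just the $i$th. Consequently a naive row-by-row induction, as in the type $A$ Lemma \ref{chconst}, does not immediately close, because a value raised out of position $i$ may interfere with rows above it; one must use Lemma \ref{moveup} carefully to confine such values and verify that the forbidden circular interval for the pair $(i,l)$ is never re-populated. Handling the sign interaction (the simultaneous action on positions $i$ and $\overline{\imath}$, and the passage between unbarred and barred ranges recorded in stages II and III of (\ref{stages})) correctly, so that the circular order $\prec_{C(i)}$ is respected throughout, is where I expect the bulk of the technical care to be required.
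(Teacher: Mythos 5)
Your setup matches the paper's: factor $T$ as $T_k\ldots T_1$, track the entry in position $i$ using the monotonicity (\ref{monotc}) and Lemma \ref{moveup}, and exploit the ``no crossing'' clause of the quantum Bruhat criteria. The treatment of the equality alternative $C'(l)=C(i)$ is essentially the paper's argument in outline (though you leave implicit the key point that the only reflections in $T_l$ reaching a position $j\le i<l$ are of the form $(j,\overline{l})$, which deposit $\overline{a}$ rather than $a$ in position $l$). However, the main step contains a genuine gap, in two respects.

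First, your central claim --- that any value $v$ with $C(i)\prec v\prec C'(i)$ present in the signed permutation ``must have been swept into position $i$ at some stage'' --- is false, and is in fact the opposite of what happens in the hard case. The paper uses Lemma \ref{moveup} precisely to rule out $b=C'(l)$ ever occupying position $i$ during the application of $T_k,\ldots,T_{l+1}$: if it did, then either $b$ would sit in $C_{l+1}[1,i]$ (and could then only reach position $l$ as $\overline{b}$) or $\overline{b}$ would sit in $C'[l+1,k]$, contradicting that $C'$ has distinct absolute values. After two quick reductions (ruling out $C_l(i)\preceq b\prec C'(i)$, and ruling out $C_{l+1}(i)\preceq b\prec C_l(i)$ by a sign analysis via Proposition \ref{critc}(2)), the remaining case is that some reflection $(i,\overline{m})$ in a block $T_m$ with $m>l$ carries the $i$th entry from a value $a$ \emph{over} $b$ to a value $c$, with $b$ sitting the whole time at a position that is not intermediate for that edge. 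Your argument ``at the step raising the $i$th entry past it, the edge would violate the criterion'' only applies when $b$ occupies an intermediate position, so it does not touch this case.

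Second, even granting that the $i$th entry jumps over $b$, you give no reason why $b$ cannot subsequently be moved into position $l$ by the later blocks $T_{m-1},\ldots,T_l$. This is the crux of the paper's proof and it is absent from your sketch: one takes the full segment $(i_0,\overline{m}),\ldots,(i_p,\overline{m})$ of $T_m$, observes that the value $a'=w(i_p)$ ends up parked at position $\overline{m}$ and is never moved again before $T_l$ acts, and that $\pi_m[i,\overline{m}]$ contains no entry strictly between $a'$ and $c$; consequently any attempt to bring a value in that range --- in particular $b$ --- into positions $l,\ldots,m-1$ or into the band $k+1,\ldots,n,\overline{n},\ldots,\overline{k+1}$ would transpose two values across $a'$ in position $\overline{m}$ and violate the quantum Bruhat criterion. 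Lemma \ref{moveup}, which you invoke as ``the tool that keeps displaced values from re-entering the window,'' concerns the value displaced \emph{from} position $i$ and cannot substitute for this lock-out argument about $b$.
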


\begin{proof} Let us first assume that we have $C(i)=C'(l)=a$ for $1\le i<l\le k$. Let us examine what happens to the entry $a$ as we apply to $u$ the reflections in $T_k,T_{k-1},\ldots,T_{l+1}$. By Lemma \ref{moveup}, either $a$ appears in  $\pi_{l+1}[1,i]$ or $\overline{a}$ appears in $\pi_{l+1}[l+1,k]=C'[l+1,k]$. The second case would lead to both $a$ and $\overline{a}$ appearing in $C'$, so it is impossible.  In the first case, it is  impossible to move $a$ to position $l$ of  $\pi_l=\pi_{l+1}T_l$, as we should. We conclude that the first part of Condition \ref{cond1}$'$ holds.

Let us now assume that we have $C(i)\prec C'(l)=b\prec C'(i)$ for $1\le i<l\le k$. We cannot have $C_l(i)\preceq b\prec C'(i)$. Indeed, the entry in position $i$ of the signed permutation would then change from $C_l(i)$  to $C'(i)$ via reflections in $T_{l-1},T_{l-2},\ldots,T_i$; so one of these reflections would transpose entries across $b$, violating the quantum Bruhat graph criterion. In fact, we cannot have $C_{l+1}(i)\preceq b\prec C_l(i)$ either (if this were true, just examine the signs of the three values based on Proposition \ref{critc} (2)). So we must have $C(i)\prec b\prec C_{l+1}(i)$, by (\ref{monotc}). This means that, at some point in the process of applying to $u$ the reflections in $T_k,T_{k-1},\ldots,T_{l+1}$, we apply to the current signed permutation $w$ a reflection $(i,\overline{m})$ such that $w(i)=a< b<w(\overline{m})=c$ (here we used Lemma \ref{moveup} to rule out having the value $b$ in position $i$ during the mentioned process). Let  $(i_0,\overline{m}),\ldots,(i_p,\overline{m})$ be the segment of $T_m$ starting with $(i,\overline{m})$ and consisting of roots $(\cdot,\overline{m})$, where $i=i_0>i_1>\ldots>i_p\ge 1$. Let $a':=w(i_p)$. Note that $a'\le a<b<c$, and that all these values have the same sign, by Proposition \ref{critc} (2). We have $\pi_m(\overline{m})=a'$, and this entry is not changed by the reflections in $T_{m-1},\ldots,T_l$, which we examine next. Another important observation is that $\pi_m[i,\overline{m}]$ contains no entry (strictly) between $a'$ and $c$, by the quantum Bruhat graph criterion. Based on this observation, we can see that the reflections in $T_{m-1},\ldots,T_l$ can never bring a value between $a'$ and $c$ in positions $l,l+1,\ldots,m-1$ and $k+1,\ldots,n,\overline{n},\ldots,\overline{k+1}$; indeed, this could only happen if two values would be transposed across $a'$ in position $\overline{m}$, which would violate the quantum Bruhat graph criterion. But then $b$ cannot be moved to position $l$, which is a contradiction. We conclude that the second part of Condition \ref{cond1}$'$ also holds.
\end{proof}

\subsection{Constructing a segment of the quantum Bruhat path}\label{constr-segm} In this section we prove the following result, which underlies the construction of the path in the quantum Bruhat graph in Proposition \ref{proplc}. 

\begin{proposition}\label{baseconstr} Let $u,k,C,C'$ be as in Proposition {\rm \ref{proplc}}, and assume that the pair of columns $C'C$ satisfies Condition {\rm \ref{cond1}$'$}. Assume also that $C[i+1,k]=C'[i+1,k]$ for some $i$ with $1\le i\le k$. Then there is a unique path $u=u_0,u_1,\ldots,u_q=v$ in the corresponding quantum Bruhat graph such that $v(i)=C'(i)$ and the edge labels form a subsequence of $\rev(\Gamma_{ki})$. Moreover, we have
\begin{align}\label{monotc1}
&C(i)=u_0(i)\prec u_1(i)\prec\ldots\prec u_q(i)=C'(i)\,,\;\;\;\mbox{and}\\
&\mbox{if $u(l)\ne v(l)$, then  $C(l)=u(l)\prec v(l)\preceq C'(l)$ and}\label{monotc2}\\ 
&\;\;\;\;\mbox{$\sign(u(l))=\sign(v(l))$}, \;\;\;\mbox{for $l=1,\ldots,i-1$} \,.\nonumber\end{align}
\end{proposition}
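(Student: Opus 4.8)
The plan is to prove existence and uniqueness of the path by an explicit greedy construction that mirrors the type $A$ Algorithm \ref{algchain}, but adapted to the four-stage structure of $\rev(\Gamma_{ki})$ displayed in \eqref{stages}. First I would observe that the roots in $\rev(\Gamma_{ki})$ that can move the entry in position $i$ come in the four stages I$-$IV indicated in the figure, and that only these stages are relevant since by hypothesis $C[i+1,k]=C'[i+1,k]$ means the entries in positions $i+1,\ldots,k$ must ultimately be restored. The target value is $C'(i)$, which by Condition \ref{cond1}$'$ (first part) does not already occur among $u[1,i]$ as an equal value, and (second part) is not ``skipped over'' by any entry strictly between $u(i)$ and $C'(i)$ sitting in positions $i+1,\ldots,k$. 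The construction proceeds stage by stage: I would scan the available positions in the order prescribed by the arrows, applying a reflection $(i,\cdot)$ exactly when the entry currently in position $i$ and the candidate entry satisfy the relevant quantum Bruhat criterion from Proposition \ref{critc}, thereby moving strictly forward in the circular order $\prec_{C(i)}$ each time.

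Next I would verify the monotonicity \eqref{monotc1}. Each applied reflection replaces $u_t(i)$ by a value strictly later in $\prec_{C(i)}$, precisely because the quantum Bruhat edge criterion (Proposition \ref{critc}, parts (1)--(3)) forbids transposing across an intermediate value; so the sequence of $i$th entries is strictly $\prec$-increasing and terminates at $C'(i)$. The termination at the correct value follows because $C'(i)$ does occur somewhere in positions $i+1,\ldots,k,\overline{k+1},\ldots$, given that it is not among the fixed tail entries and not equal to any earlier entry (here I would invoke Condition \ref{cond1}$'$ to guarantee the greedy scan reaches it). For the side effects \eqref{monotc2} on positions $l<i$, I would note that the only reflections in $\rev(\Gamma_{ki})$ touching a position $l<i$ are the ``$(l,\overline{\imath})$''-type swaps in the early stages, which move an entry out of position $l$ into position $\overline{\imath}$ and back; tracking these via a lemma analogous to Lemma \ref{moveup} shows that whenever $u(l)$ is altered, its replacement $v(l)$ satisfies $u(l)\prec v(l)\preceq C'(l)$ with the sign preserved, the sign condition coming directly from the constraint $\sign(w(i))=\sign(w(\overline{\jmath}))$ in Proposition \ref{critc} (2).

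For \emph{uniqueness}, I would argue exactly as in the type $A$ proof of Lemma \ref{chconst} (2): if two distinct subsequences of $\rev(\Gamma_{ki})$ both produced valid paths ending with $v(i)=C'(i)$, then at the first position where they diverge, one of them would at some step be forced to transpose the entry in position $i$ across the value that the other path placed there, violating the quantum Bruhat criterion (cf. Remark \ref{across}). Since every reflection strictly advances the $i$th entry in the circular order and the order in which positions become available is totally prescribed by the stage structure of $\rev(\Gamma_{ki})$, the greedy choice is forced at every step, giving a unique subsequence.

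The main obstacle, and the genuinely new difficulty relative to type $A$, is the interaction between the two halves of the signed-permutation window: a single reflection $(i,\overline{m})$ simultaneously affects position $i$ and position $\overline{\imath}$ (and, through the $2\varepsilon_i$ reflection $(i,\overline{\imath})$, can change signs), so the four-stage bookkeeping in \eqref{stages} must be handled carefully to ensure that the greedy advancement never strands an intermediate value in a position it cannot escape, and that the tail entries $C[i+1,k]$ are genuinely restored at the end. As flagged in Remark \ref{remroots} (2), reflections in $\Gamma_{ki}$ can alter \emph{any} of the first $i$ entries, not just the $i$th, so establishing \eqref{monotc2} requires the refined tracking lemma rather than the simpler type $A$ argument; I expect this sign-and-position bookkeeping across the four stages to absorb most of the technical effort.
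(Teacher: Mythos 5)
Your overall architecture matches the paper's: a two-stage greedy construction (a call of \emph{path-A} on positions $k+1,\ldots,n$, an optional $(i,\overline{\imath})$ step, then a call of \emph{path-A} on $\overline{n},\ldots,\overline{k+1},\overline{i-1},\ldots,\overline{1}$), with uniqueness forced step by step exactly as in Lemma \ref{chconst}~(2). The genuine gap is at the point you yourself flag as the hard part, namely \eqref{monotc2}. A ``lemma analogous to Lemma \ref{moveup}'' only tracks \emph{where} entries travel; it cannot deliver the upper bound $v(l)\preceq C'(l)$. The quantum Bruhat criterion for the single reflection $(l,\overline{\imath})$ gives you $u(l)<v(l)$ and the sign condition, but says nothing about $C'(l)$. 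The paper proves the bound by contradiction: it takes the \emph{largest} $l_1<i$ where it fails, sets $a:=w(i)<b:=\overline{C'(l_1)}<c_1:=\overline{w(l_1)}$, lists the entire stage-IV segment $(l_1,\overline{\imath}),\ldots,(l_p,\overline{\imath})$ actually applied, and then runs a pigeonhole argument: for every integer $x$ between $b$ and $C'(i)$, either $x$ or $\overline{x}$ lies in the current column $\widetilde{C}$, and Condition \ref{cond1}$'$ for the pair $C'\widetilde{C}$ confines the corresponding entries of $C'$ to a set that is too small. This counting step is the real content of the proposition and is absent from your outline. Note also that \eqref{monotc2} cannot be deferred to the end as a ``side effect'': the paper needs it \emph{inside} the existence argument, to verify that the stage-IV reflections do not transpose the value in position $\overline{\imath}$ across the fixed positions $i+1,\ldots,k$ (this is the one quantum-Bruhat condition that the greedy scan does not control automatically).

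Two smaller omissions. First, when $C'(i)$ lies beyond $\overline{C'(i)}$ in the circular order (Case 2 of the paper), you need Lemma \ref{casetwo}, i.e.\ $\overline{C'(i)}\preceq M(u,i,C')\preceq C'(i)$, to guarantee that after stage I and the possible $(i,\overline{\imath})$ step the entry in position $i$ lands in a range from which stages III--IV are genuine Bruhat up-steps; this is itself a matching argument using Condition \ref{cond1}$'$, not an automatic consequence of greediness. Relatedly, the decision of whether to insert the reflection $(i,\overline{\imath})$ is not a pure greedy choice: the paper isolates it as a necessary condition in Lemma \ref{lemrefla} (it is taken exactly when the sign of the stage-I output $u_A(i)=M(u,i,C')$ differs from $\sign(C'(i))$). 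Second, a bookkeeping slip: in $\rev(\Gamma_{ki})$ the reflections $(l,\overline{\imath})$ with $l<i$ form the \emph{last} stage (stage IV in \eqref{stages}), not the ``early stages,'' and each position $l<i$ is touched at most once --- the displaced entry does not return to position $l$, it cascades through position $\overline{\imath}$.
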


We distinguish the following two cases, to which we refer freely below.

{\em Case {\rm 1.}} $C(i)\preceq C'(i)\prec \overline{C'(i)}$. This case has the four subcases shown in the figure below. Here the elements of $[\overline{n}]$ are placed on a circle as indicated, and the marked arc is oriented from $C(i)$ to $C'(i)$. 

\vspace{-17.4cm} 

\[\;\;\;\;\;\;\;\;\mbox{\includegraphics[scale=0.69]{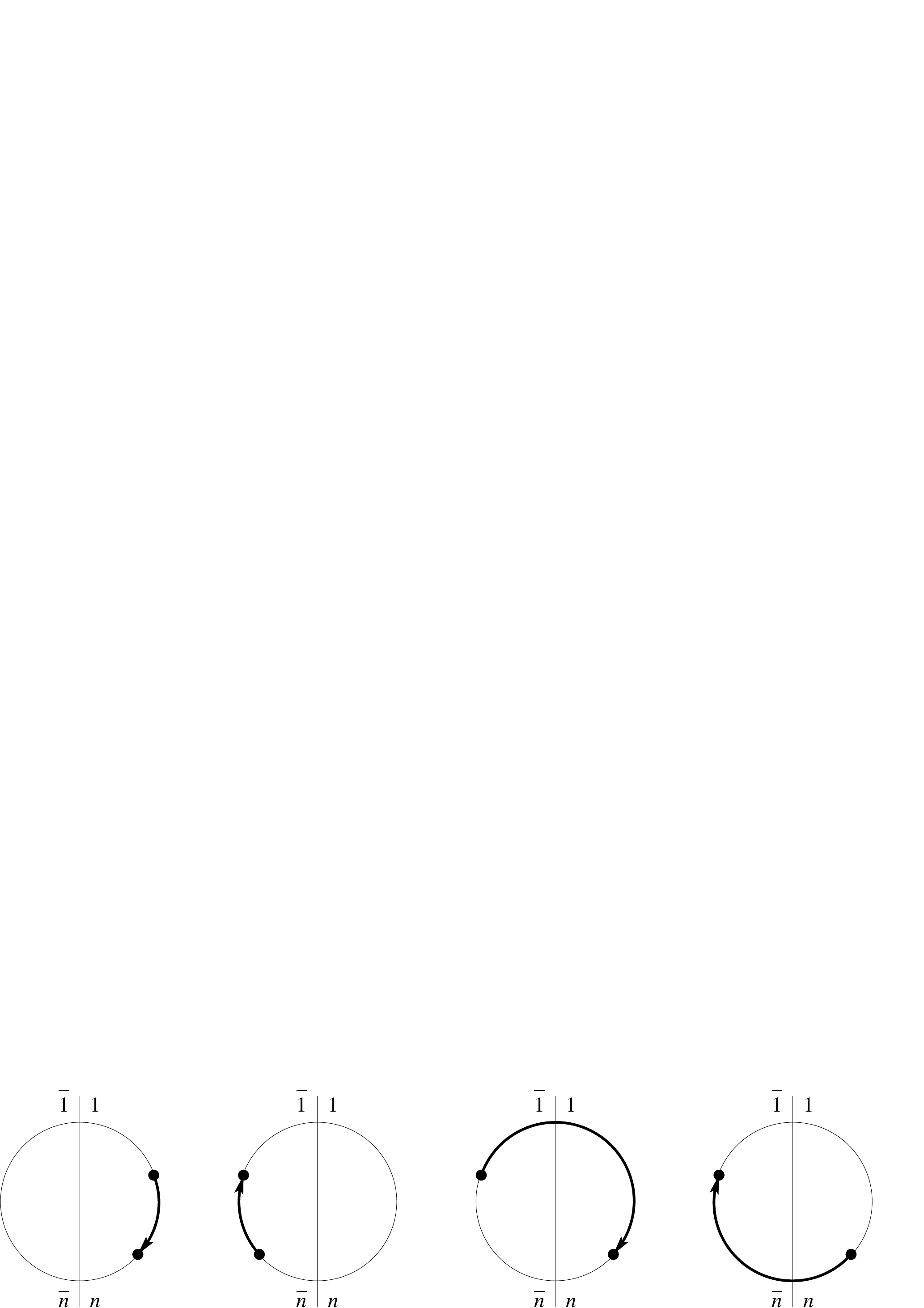}}\]

{\em Case {\rm 2.}} $C(i)\preceq \overline{C'(i)}\prec {C'(i)}$. This case has the four subcases shown in the figure below, where the same notation as above is used.

\pagebreak 
$\;$

\vspace{-18cm}

\[\;\;\;\;\;\;\;\;\mbox{\includegraphics[scale=0.69]{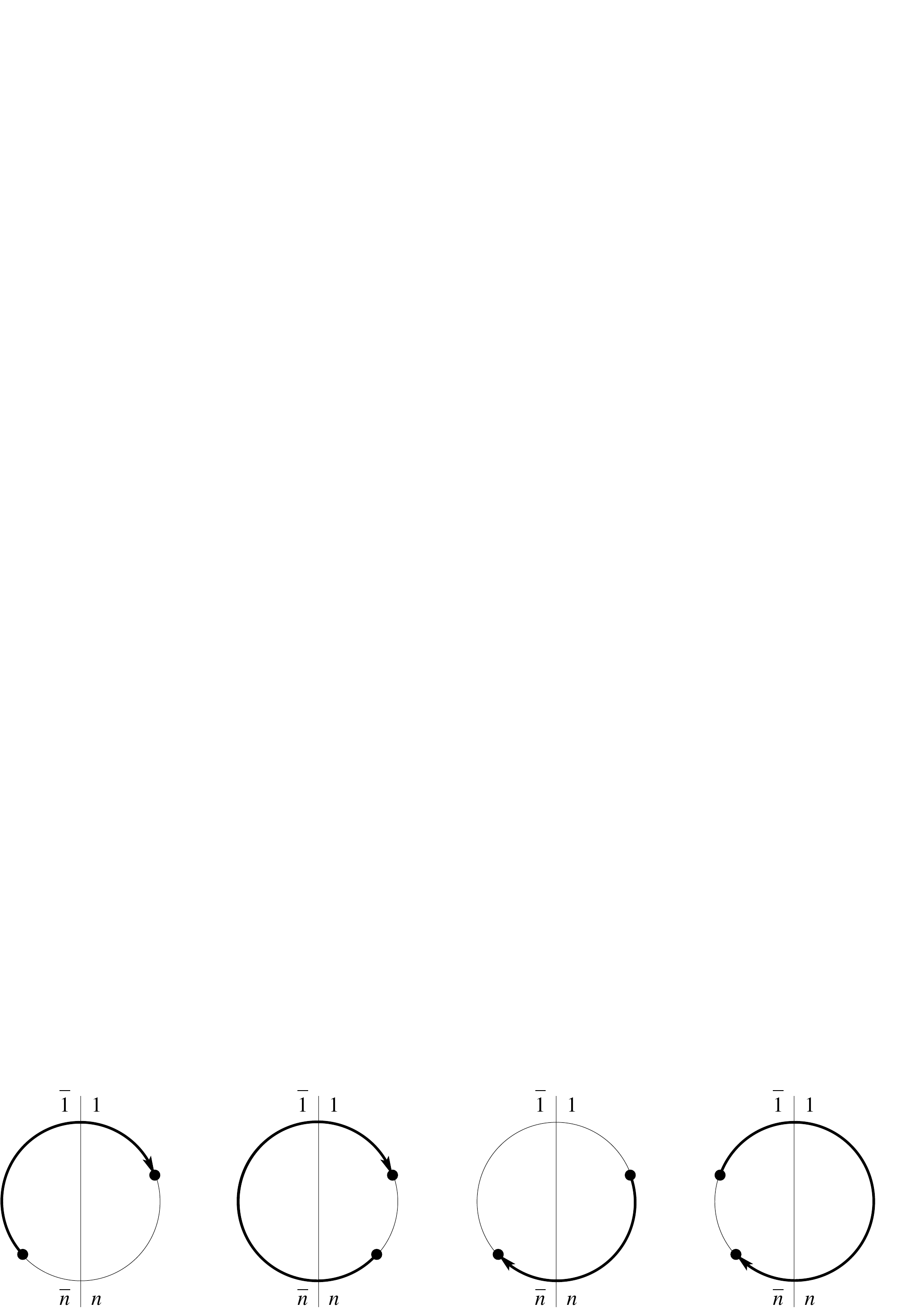}}\]

The next lemma will be used in the proof of Proposition \ref{baseconstr}. We need the following notation:
\[M(u,i,C'):=\max\,\{u(i)\}\cup\{u(l)\::\:\#C'+1\le l\le n,\:u(i)\prec u(l)\preceq C'(i)\}\,,\]
where the maximum is taken with respect to the circular order $\prec_{u(i)}$ on $[\overline{n}]$. 

\begin{lemma}\label{casetwo} Under the hypotheses of Proposition {\rm \ref{baseconstr}}, in Case {\rm 2} we have 
\[\overline{C'(i)}\preceq M(u,i,C')\preceq C'(i)\,.\]
\end{lemma}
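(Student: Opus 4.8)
The goal here is to prove Lemma~\ref{casetwo}, which asserts the two-sided bound $\overline{C'(i)}\preceq M(u,i,C')\preceq C'(i)$ in Case~2, where $\prec=\prec_{u(i)}$ throughout. The upper bound $M(u,i,C')\preceq C'(i)$ is essentially built into the definition of $M(u,i,C')$, since every competitor $u(l)$ in the maximization satisfies $u(l)\preceq C'(i)$ and also $u(i)\preceq C'(i)$ (the latter being exactly the hypothesis $C(i)\preceq\overline{C'(i)}\prec C'(i)$ together with $u(i)=C(i)$); so the real content is the lower bound $\overline{C'(i)}\preceq M(u,i,C')$.

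The plan is to establish the lower bound by locating the value $\overline{C'(i)}$ inside the window $u[1,\overline{n}]$ and arguing that it cannot sit strictly before $\overline{C'(i)}$ in the arc from $u(i)$ to $C'(i)$ unless some entry $u(l)$ with $\#C'+1\le l\le n$ already realizes a value $\succeq\overline{C'(i)}$ in that arc. First I would record the basic structural fact in Case~2: since $C(i)\preceq\overline{C'(i)}\prec C'(i)$, the (clockwise, in the $\prec_{u(i)}$ sense) arc from $u(i)=C(i)$ to $C'(i)$ contains the antipodal point $\overline{C'(i)}$, and hence also the value $C'(i)$ is reached only after passing $\overline{C'(i)}$. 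Next I would use the symmetry $w(\overline{l})=\overline{w(l)}$ of signed permutations: the value $C'(i)$, being an entry that the path must produce in position $i$, appears somewhere in the permutation $u$, say $u(l_0)=C'(i)$, and then $u(\overline{l_0})=\overline{C'(i)}$. I would then analyze where these two positions $l_0,\overline{l_0}$ can lie relative to the window indices $1,\ldots,\#C'$ and the ``free'' band $\#C'+1,\ldots,n$ (and its barred mirror).

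The crux of the argument is to use Condition~\ref{cond1}$'$ (available by hypothesis) together with the fact that $C[i+1,k]=C'[i+1,k]$ to rule out $\overline{C'(i)}$ and the competing values from lying in the frozen positions $i+1,\ldots,k$ or equalling an already-placed entry of $C'$. Concretely, neither $C'(i)$ nor $\overline{C'(i)}$ can occur among $u(i+1),\ldots,u(k)$ (these equal $C'(i+1),\ldots,C'(k)$, and a repeat of the value or its bar would contradict column-strictness of $C'$ or Condition~\ref{cond1}$'$). This forces the position $l_0$ of $C'(i)$ to lie either in the head $\{1,\ldots,i\}$, in the free band $\{\#C'+1,\ldots,n\}$, or in a barred position; I would then trace through the subcases of Case~2 (the four configurations in the second circle figure) to show that in each, either $u(\overline{l_0})=\overline{C'(i)}$ itself lands in the free band with $u(i)\prec\overline{C'(i)}\preceq C'(i)$ — making it an eligible competitor and immediately giving $\overline{C'(i)}\preceq M(u,i,C')$ — or the configuration forces some other free-band entry to already exceed $\overline{C'(i)}$ in the $\prec_{u(i)}$ order.

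I expect the main obstacle to be the bookkeeping across the four subcases of Case~2 combined with the sign constraints coming from Proposition~\ref{critc}(2): the value $M(u,i,C')$ is a maximum over a set defined by a \emph{circular} inequality, so one must be careful that ``passing the antipode $\overline{C'(i)}$'' is stated consistently with respect to the single reference order $\prec_{u(i)}$ rather than switching base points midway. The delicate point is showing that the antipodal value $\overline{C'(i)}$, or something at least as far along the arc, is forced to occupy a position in the admissible index range $\#C'+1\le l\le n$ rather than being hidden in a barred position $\overline{l}$ with $l\le \#C'$; resolving this will rely on the symmetry $u(\overline{l})=\overline{u(l)}$ to convert a barred-position occurrence into an unbarred one, and on Condition~\ref{cond1}$'$ to prevent the unbarred occurrence from colliding with the frozen tail. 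Once the correct position is pinned down, the inequality $\overline{C'(i)}\preceq M(u,i,C')$ follows directly from the defining maximization.
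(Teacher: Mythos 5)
Your reduction of the lemma to the lower bound $\overline{C'(i)}\preceq M(u,i,C')$, and your observation that it suffices to exhibit one entry of $u[k+1,n]$ lying in the arc from $\overline{C'(i)}$ to $C'(i)=:a$, are correct; so is the disposal of the easy configurations where $a$ or $\overline{a}$ itself occupies one of the positions $k+1,\ldots,n$ (or where $u(i)=\overline{a}$, so that $M\succeq u(i)$ trivially). But there is a genuine gap at exactly the case that carries all the difficulty: when $a$ sits in a position of $u[1,i-1]$, so that $\overline{a}$ sits in the barred mirror of that range and neither value is an eligible competitor in the maximum. There you assert that ``the configuration forces some other free-band entry to already exceed $\overline{C'(i)}$,'' but no mechanism is given, and none can be extracted from tracking only the single antipodal pair $\{a,\overline{a}\}$: the location of $a$ and $\overline{a}$ says nothing about the remaining entries of $u[k+1,n]$.

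The paper closes this case by counting over the whole set $A=\{\overline{a},\overline{a-1},\ldots,\overline{1},1,\ldots,a\}$ rather than the one pair. Suppose $u[k+1,n]$ contained no element of $A$. Since $u[i+1,k]=C'[i+1,k]$ is disjoint from $A$ (by Condition \ref{cond1}$'$ together with the distinctness of absolute values in $C'$) and $u(i)\notin A$ in the relevant subcases, each of the $a$ pairs $\{x,\overline{x}\}$ with $1\le x\le a$ must place its unique window representative in $u[1,i-1]$. Condition \ref{cond1}$'$, applied to each such position $i'<i$ paired with $i$, forces $C'(i')$ into $A\setminus\{a,\overline{a}\}$; these targets have distinct absolute values drawn from $\{1,\ldots,a-1\}$, so at most $a-1$ positions $i'$ can carry an element of $A$ --- contradicting the required $a$ of them. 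This pigeonhole over all pairs $\{x,\overline{x}\}$ with $x\le a$ is the missing idea; without it (or an equivalent global argument) your outline does not close.
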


\begin{proof} Without loss of generality, assume one of the first two subcases of Case 2, where $C(i)\ne\overline{C'(i)}$. Let $a:=C'(i)\in[n]$ and $A:=\{\overline{a},\overline{a-1},\ldots,\overline{1},1,\ldots,a\}$. We need to show that $u[k+1,n]$ contains an element in $A$, so assume the contrary. Note that $u[i+1,k]=C'[i+1,k]$ contains no element in $A$, by Condition \ref{cond1}$'$. We conclude that $u[1,i-1]$ contains an element from each pair $\{x,\overline{x}\}$ of elements in $A$. If $u(i')\in A$ for $i'<i$, we say that $u(i')$ is matched with $C'(i')$. Since $C'(i)=a$, the only possible matches for $u[1,i-1]\cap A$ are elements in $A\setminus\{a,\overline{a}\}$, by Condition \ref{cond1}$'$. But these are too few to match $a$ elements, which is a contradiction.
\end{proof}

Assume for the moment that a path with the property stated in Proposition \ref{baseconstr} exists. Let $T$ be the sequence of edge labels  for this path. Note that the sequence of roots $\rev(\Gamma_{ki})$ has the sequence $(i,k+1),\ldots,(i,n)$ as its head (the latter happens to be the $i$th row of the type $A$ chain $\Gamma(k)$ in its display as a matrix (\ref{omegakchain}), cf. Remark \ref{remroots}). The factorization of $\rev(\Gamma_{ki})$ into the mentioned head and the corresponding tail induces a factorization of $T$ denoted $T_AT_C$. Let $u_A:=uT_A$ and
\[u_A':=\casetwo{u_A(i,\overline{\imath})}{(i,\overline{\imath})\in T_C}{u_A}\]

The following lemma gives necessary conditions for the construction in Proposition \ref{baseconstr}.

\begin{lemma}\label{lemrefla}
We have $u_A(i)=M(u,i,C')$. Moreover, the root $(i,\overline{\imath})$ belongs to $T_C$ (being its first root) precisely when ${\rm sign}(u_A(i))\ne{\rm sign}(C'(i))$, and either $u_A'(i)\le C'(i)\le n$ or $\overline{n}\le u_A'(i)\le C'(i)$. 
\end{lemma}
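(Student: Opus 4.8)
The plan is to read everything off the four-stage picture of $\rev(\Gamma_{ki})$ displayed after (\ref{stages}). Stage I is the head $T_A$, consisting of the type-$A$ reflections $(i,k+1),\dots,(i,n)$; stages II–IV form the tail $T_C$, with the sign reflection $(i,\overline{\imath})$ as stage II, the reflections $(i,\overline{n}),\dots,(i,\overline{k+1})$ as stage III, and $(i-1,\overline{\imath}),\dots,(1,\overline{\imath})$ as stage IV. A stage-I reflection $(i,l)$ only exchanges the entry of position $i$ with an entry in a position $>k$, and that entry is untouched by the earlier stage-I reflections; hence the value $u_A(i)$ produced by the head lies in $\{u(i),u(k+1),\dots,u(n)\}$. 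Combined with the monotonicity (\ref{monotc1}), which forces $u(i)\preceq u_A(i)\preceq C'(i)$, this shows that $u_A(i)$ is one of the terms in the maximum defining $M(u,i,C')$, so $u_A(i)\preceq M(u,i,C')$.

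To upgrade this to equality I would transport the type-$A$ analysis of Algorithm \ref{algchain} and Lemma \ref{chconst}(2) to stage I, identifying each $(i,l)$ with its type-$A$ counterpart: the monotone quantum path labeled by a subsequence of $(i,k+1),\dots,(i,n)$ is unique and coincides with the greedy output of \emph{path-A}. Suppose, for contradiction, that $u_A(i)\prec M(u,i,C')$. Then the value $M(u,i,C')$ still occupies a position $>k$ after the head. By monotonicity the entry of position $i$ must later rise past $M(u,i,C')$; yet none of the stage II–IV reflections brings a value from an unbarred position $>k$ into position $i$ (they feed position $i$ only from the barred positions $\overline{\imath},\overline{k+1},\dots,\overline{n}$, and one checks that $M(u,i,C')$, even if shifted to position $\overline{\imath}$ by a stage-III move, is routed to a position $<i$ rather than to $i$). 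Consequently position $i$ would have to cross the parked value $M(u,i,C')$, contradicting the edge criterion of Proposition \ref{critc}. Therefore $u_A(i)=M(u,i,C')$.

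For the statement about $(i,\overline{\imath})$ I would analyze the effect of stages III and IV on position $i$ through Proposition \ref{critc}(2). Each such reflection carries a sign condition on the two transposed entries, from which one reads off that applying it keeps position $i$ on the same side of $[\overline{n}]$ and strictly increases its entry in the natural order of $[\overline{n}]$. Thus, after the optional stage-II flip, the entry $u_A'(i)$ must already lie on the same side as $C'(i)$ and satisfy $u_A'(i)\le C'(i)$, which is the bound claimed in the lemma. Since $u_A'(i)\in\{M(u,i,C'),\overline{M(u,i,C')}\}$ and these have opposite signs, and since the only tail reflection that changes the side of position $i$ is the stage-II reflection $(i,\overline{\imath})$, this reflection is present in $T_C$ exactly when $\sign(M(u,i,C'))\ne\sign(C'(i))$, i.e. when $\sign(u_A(i))\ne\sign(C'(i))$.

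It remains to confirm the natural-order inequality $u_A'(i)\le C'(i)$ in both sign cases, and this is the delicate point. I would split according to Cases 1 and 2 of Proposition \ref{baseconstr} and convert the circular relation $u(i)\preceq M(u,i,C')\preceq C'(i)$ into the natural order, using the position of $C'(i)$ relative to $\overline{C'(i)}$ on the circle; in Case 2, Lemma \ref{casetwo} supplies the crucial bound $\overline{C'(i)}\preceq M(u,i,C')\preceq C'(i)$, which pins down the side and magnitude of $\overline{M(u,i,C')}$. The main obstacle throughout is precisely this constant translation between the circular order $\prec_{u(i)}$ that governs the head and the definition of $M(u,i,C')$, and the natural total order together with the sign conditions that govern stages III–IV, all while tracking how stage I has redistributed the values among the positions $>k$; the subcase diagrams accompanying Cases 1 and 2 are the device for organizing this verification.
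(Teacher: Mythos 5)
Your overall strategy coincides with the paper's (derive the second statement from the sign-preserving, value-increasing nature of the stage III--IV reflections, then prove $u_A(i)=M(u,i,C')$ by contradiction via a violation of the quantum Bruhat graph criterion), but there is a genuine gap in the contradiction argument. You correctly observe that $M:=M(u,i,C')$ never enters position $i$, so the value in position $i$ must at some step jump over $M$ in circular order; you then conclude that this step "crosses the parked value $M$" and violates Proposition \ref{critc}. The problem is that you have not shown $M$ is still sitting at an unbarred position $m>k$ (i.e., at a position that is intermediate for the jumping reflection) at the moment of the jump --- indeed, your own parenthetical concedes that a stage-III reflection $(i,\overline{m})$ may ship $M$ off to position $\overline{\imath}$ and thence to a position $<i$, and such positions are not intermediate for the later reflections $(i,\overline{m'})$. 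Once $M$ has been displaced, the crossing step need not violate any criterion, and your contradiction evaporates.

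The paper closes exactly this hole with a dichotomy on whether $\sign(M)=\sign(C'(i))$, using the second statement of the lemma (which is why it is proved first). If the signs agree, then after the optional flip the entry in position $i$ always has the sign of $C'(i)=\sign(M)$, whereas the reflection $(i,\overline{m})$ that would dislodge $M$ must bring $\overline{M}$, of the opposite sign, into position $i$; so $M$ is genuinely parked and your crossing argument goes through. If the signs disagree, one checks that $\sign(u_A(i))=\sign(M)$, so by the second statement $T_C$ must begin with $(i,\overline{\imath})$, and since $u_A(i)\prec M\prec \overline{u_A(i)}$ with $M$ still in a position of $\{k+1,\ldots,n\}$, this very first reflection already violates Proposition \ref{critc} (3) --- the contradiction occurs before $M$ can be moved. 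You need to supply this case analysis (or an equivalent argument that $M$ cannot be displaced prior to the crossing) to make the first statement rigorous. As a secondary point, your final paragraph overcomplicates the inequality $u_A'(i)\le C'(i)$: it follows immediately from the monotonicity you already established for stages III--IV together with the fact that the path terminates at $C'(i)$; Lemma \ref{casetwo} and the Case 1/Case 2 diagrams are needed for the converse (constructive) direction in Remark \ref{rem-chain}, not for this necessity statement.
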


\begin{proof}
To prove the second statement of the lemma, note that, by Proposition \ref{critc} (2), the reflections in $T_C\setminus(i,\overline{\imath})$ can only increase the value in position $i$, while preserving the sign.  

For the first statement, assume the contrary, which means that
\begin{equation}\label{notgetmax}M(u,i,C')\in u_A[k+1,n]\,,\;\;\;\;\;\mbox{and}\;\;\;\;\;u_A(i)\prec M(u,i,C')\preceq C'(i)\,.\end{equation}
We can quickly rule out $M(u,i,C')=C'(i)$. We now distinguish two cases, depending on the signs of $M(u,i,C')$ and $C'(i)$ coinciding or not. In the first case, the entry $M(u,i,C')$ in $u_A$ is not moved upon applying the reflections in $T_C$ (by the remark at the beginning of the proof). By (\ref{notgetmax}), one of the reflections in $T_C$ transposes values across $M(u,i,C')$, so the quantum Bruhat graph criterion is violated. In the second case, the second statement of the lemma and (\ref{notgetmax}) imply that ${\rm sign}(u_A(i))={\rm sign}(M(u,i,C'))$, that $T_C$ starts with $(i,\overline{\imath})$, and that this reflection transposes values across $M(u,i,C')$. So we end up with a contradiction once again.
\end{proof}

We now describe the algorithm that constructs the path in Proposition \ref{baseconstr}. The notation is the one introduced above. The algorithm inputs the signed permutation $u$, the target column $C'$, and the position $i$; it outputs the lists of reflections $T_A,T_C$ determining the path, and the end permutation $v$. The algorithm invokes twice the greedy procedure {\it path-A} (see Algorithm \ref{algchain}), which is used here in the type $C$ context, namely the set of entries $[n]$ is replaced by $[\overline{n}]$; the two calls of {\it path-A} refer to stages I and III$-$IV in (\ref{stages}), respectively. 

\begin{algorithm}\label{algchainc}\hfill\\
procedure path-C$(u,i,C')$;\\
let $M:=M(u,i,C')$, $k:=\#C'$, $L:=(k+1,\ldots,n)$;\\
$(T_A,u_A)$:=path-A$(u,i,M,L)$;\\
if ${\rm sign}(u_A(i))\ne{\rm sign}(C'(i))$ then \\
\indent let $T_C:=(i,\overline{\imath})$, $u_A':=u_A(i,\overline{\imath})$;\\
else let $T_C:=\emptyset$, $u_A':=u_A$;\\
end if;\\
let $L:=(\overline{n},\ldots,\overline{k+1},\overline{i-1},\ldots,\overline{1})$;\\
$(S,v)$:=path-A$(u_A',i,C'(i),L)$;\\
let $T_C:=T_C,S$;\\
return $((T_A,T_C),v)$;\\
end.
\end{algorithm}

\begin{remarks}\label{rem-chain} (1) The algorithm terminates (correctly). Indeed, by Condition \ref{cond1}$'$, the entry $C'(i)$ appears in $u$ in one of the positions $k+1,\ldots,n,\overline{n},\ldots,\overline{k+1}$ or $\overline{\imath},\ldots,\overline{1}$. 

(2) Lemma \ref{casetwo} guarantees that $u_A'(i)\le C'(i)\le n$ or $\overline{n}\le u_A'(i)\le C'(i)$. So all the edges of the constructed path corresponding to stages III and IV in (\ref{stages}) are up steps in Bruhat order. Moreover, it follows that the monotonicity condition (\ref{monotc1}) is satisfied. 

(3) The necessary conditions in Lemma \ref{lemrefla} are incorporated into the algorithm. The uniqueness of the path then follows by the arguments used in type $A$ (see the proof of Lemma \ref{chconst} (2)), which here show that only the greedy construction could work in stages I and III$-$IV. 

(4) The construction in the algorithm guarantees that an edge labeled $(i,\overline{\imath})$ is an edge of the quantum Bruhat graph. Furthermore, Condition \ref{cond1}$'$ and the greedy approach guarantee that all the other edges satisfy the conditions in the corresponding quantum Bruhat graph criteria which refer to the transposition of the value in position $i$ across the positions $i+1,\ldots,n,\overline{n},\ldots,\overline{k+1}$ and $\overline{\imath},\ldots,\overline{2}$ (see the proof of Lemma \ref{chconst} (2)).
\end{remarks}

\begin{proof}[Proof of Proposition {\rm \ref{baseconstr}}] Assume that $u(i)\ne C'(i)$ and that (\ref{monotc2}) holds. Then, given Remarks \ref{rem-chain}, the only fact left to prove is that the reflections pertaining to stage IV satisfy the condition in the corresponding quantum Bruhat graph criterion which refers to the transposition of the value in position $i$ across the positions $\overline{k},\ldots,\overline{i+1}$ (or, equivalently, to the transposition of the value in position $\overline{\imath}$ across the positions $i+1,\ldots,k$). Let us assume that at some point in stage IV we apply to the current signed permutation $w$ a reflection $(l,\overline{\imath})$ with $l<i$, such that 
\[w(l)<w(j)=C'(j)<\overline{w(i)}\,,\;\;\;\;\mbox{for some $j\in\{i+1,\ldots,k\}$}\,.\]
But, by (\ref{monotc2}), we have
\[C(l)=u(l)=w(l)\prec \overline{w(i)}\preceq C'(l)\,.\]
It follows that Condition \ref{cond1}$'$ for $C'C$ is violated. This completes the proof of the fact that Algorithm \ref{algchainc} constructs a path in the quantum Bruhat graph. 

We conclude the proof by addressing (\ref{monotc2}). Assume that this fails for some $l$, and let $l_1<i$ be the largest such $l$. This means that, if $w$ is the signed permutation to which the reflection $(l_1,\overline{\imath})$ is applied in stage IV, then
\[a:=w(i)<b:=\overline{C'(l_1)}<c_1:=\overline{w(l_1)}\,.\]
Let $\widetilde{C}:=w[1,k]$, and note that, by (\ref{monotc2}), the pair of columns $C'\widetilde{C}$ satisfies Condition \ref{cond1}$'$. Assume that the complete sequence of reflections applied to $w$ in stage IV is $(l_1,\overline{\imath}),\ldots,(l_p,\overline{\imath})$, where $l_1>l_2>\ldots>l_p$. Let $c_i:=\overline{w(l_i)}$, where $a<b<c_1<c_2<\ldots<c_p$ all have the same sign. Without loss of generality, assume that this sign is positive.

Given any $x\in\{b,b+1,\ldots,c_k\}$, either $x$ or $\overline{x}$ is in $\widetilde{C}$, say in position $j$, by Algorithm \ref{algchainc}. We claim that the possible values for $C'(j)$ are $\{\pm b,\pm(b+1),\ldots,\pm(c_k-1)\}$. Overall, we then have too few choices for these values, which is a contradiction. The claim is clearly true if $j>i$, because then $x\ne c_k$ and $\widetilde{C}(j)=C'(j)$. For $j<i$, we have the following two cases.

{\em Case $1$}: $\widetilde{C}(j)=x$. Since $x\ne c_k$ and $C'(i)=c_k$, we have $C'(j)\in\{x,x+1,\ldots,c_k-1\}$, based on Condition \ref{cond1}$'$.

{\em Case $2$}: $\widetilde{C}(j)=\overline{x}$. In this case, by Algorithm \ref{algchainc}, we have $j\le l_1$, so we can assume $j<l_1$. Since $C'(l_1)=\overline{b}$, we have $C'(j)\in\{\overline{x},\overline{x-1},\ldots,\overline{b+1}\}$, based on Condition \ref{cond1}$'$. In addition, if $x=c_k$, we have $C'(j)\ne\overline{c_k}$ because $C'(i)=c_k$. 

Note that $x=b$ cannot be in Case 2, because this would violate the first part of Condition \ref{cond1}$'$, given that $C'(l_1)=\overline{b}$. The claim related to the values of $C'(j)$ is now proved, which concludes the entire proof. 
\end{proof}

\subsection{Completing the proof of Proposition \ref{proplc}} What remains to be done is to construct the full chain in the statement, by iterating Algorithm \ref{algchainc}. 

\begin{proof}[Proof of Proposition {\rm \ref{proplc}}] Recall that the necessity of Condition \ref{cond1}$'$ was proved in Section \ref{nec-cond1}. To prove that Condition \ref{cond1}$'$ implies the existence of the chain in Proposition {\rm \ref{proplc}}, we iterate the construction in Proposition \ref{baseconstr}, based on Algorithm \ref{algchainc}, for $i=k,k-1,\ldots,1$. Note that, by (\ref{monotc2}) corresponding to iteration $i+1$, the pair of columns $C'C_{i+1}$ satisfies Condition \ref{cond1}$'$, so the hypothesis for iteration $i$ is satisfied (the notation is that at the beginning of Section \ref{nec-cond1}). Finally, note that (\ref{monotc}) follows from (\ref{monotc1}) and (\ref{monotc2}). 
\end{proof}

\section{The proof of Proposition \ref{proprc}}\label{colr}

Proposition \ref{proprc} refers to the subchain $\Gamma_r(k)$ of our chosen $\omega_k$-chain $\Gamma(k)=\Gamma_r(k)\Gamma_l(k)$. Note that $\Gamma_r(k)$ is a subchain of $\Gamma_l(k)$; more precisely, the action of the reflections in ${\rm rev}(\Gamma_r(k))$ correspond to stage IV in (\ref{stages}). The proof of Proposition \ref{proprc} is based on three intermediate results: Lemma \ref{lem1} does a reduction to Proposition \ref{proplc} and the related results, Lemma \ref{lem2} does a further reduction from the case of arbitrary columns to the one of sorted columns, and Proposition \ref{lem4} establishes the relationship with the condition defining KN columns, see Definition \ref{defkn}.

We start by fixing a signed permutation $u$ and a column $C'$ of height $k$. As usual, let $C:=u[1,k]$, and define
\[{\rm int}(C,C'):=\left(\bigcup_{i=1}^k\{j\in[\overline{n}]\::\:C(i)<j<C'(i)\}\right)\setminus \{\pm C(i)\::\:i=1,\ldots,k\}\,.\]
We need the following three conditions on the pair $C'C$.

\begin{condition}\label{condr1} We have
\[\{|C(i)|\::\:i=1,\ldots,k\}=\{|C'(i)|\::\:i=1,\ldots,k\}\,.\]
\end{condition}
\begin{condition}\label{condr2} For $i=1,\ldots,k$, we have
\[C(i)\le C'(i)\le n \;\;\;\mbox{or}\;\;\; \overline{n}\le C(i)\le C'(i)\,.\]
\end{condition}
\begin{condition}\label{condr3} We have ${\rm int}(C,C')=\emptyset$.
\end{condition}

\subsection{Reduction to Proposition \ref{proplc} and the related results}

We use the notation introduced above, and prove the following lemma.

\begin{lemma}\label{lem1} The pair $C'C$ satisfies Conditions {\rm \ref{cond1}$'$}, {\rm \ref{condr1}}, {\rm \ref{condr2}}, and {\rm \ref{condr3}} if and only if there is a path $u=u_0,u_1,\ldots,u_p=v$ in the corresponding quantum Bruhat graph such that $v[1,k]=C'$ and the edge labels form a subsequence of $\rev(\Gamma_r(k))$. Moreover, the mentioned path is unique, and {\rm (\ref{monotcr})} holds.
\end{lemma}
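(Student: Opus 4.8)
\textbf{Plan of proof for Lemma \ref{lem1}.}

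The plan is to recognize that $\rev(\Gamma_r(k))$ consists exactly of the reflections labeling stage IV in (\ref{stages}), so that the desired path is constructed by restricting the machinery of Section \ref{coll} to the very last stage of Algorithm \ref{algchainc}. First I would analyze the structure of the sequence $\rev(\Gamma_r(k)) = \rev(\Gamma_2)\ldots\rev(\Gamma_k)$: each $\rev(\Gamma_i)$ is the block $(i-1,\overline{\imath}),\ldots,(2,\overline{\imath}),(1,\overline{\imath})$, so the available reflections are precisely those of the form $(l,\overline{\jmath})$ with $l<j\le k$. Because only reflections $(l,\overline{\jmath})$ with both indices in $[1,k]$ occur, every entry $C(i)$ with $i \in [1,k]$ stays within the window: no entry is ever swapped with a position outside $\{1,\ldots,k,\overline{k},\ldots,\overline{1}\}$. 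This immediately forces Condition \ref{condr1} (the multiset of absolute values in the window is preserved), and the Chevalley-type length computation of Proposition \ref{critc}(2) forces every such edge to be an \emph{up step} with matching signs, which gives Condition \ref{condr2} and the monotonicity (\ref{monotcr}). Condition \ref{condr3} (emptiness of $\mathrm{int}(C,C')$) records the fact that no intermediate value lies strictly between $C(i)$ and $C'(i)$ along an up step, and Condition \ref{cond1}$'$ is inherited from Proposition \ref{neccond1} since $\rev(\Gamma_r(k))$ is a subsequence of $\rev(\Gamma_l(k))$.

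For the forward direction, I would assume the four conditions hold and build the path explicitly. The construction reuses the greedy procedure \emph{path-A} from Algorithm \ref{algchain}, applied in the type $C$ setting over $[\overline{n}]$, exactly as in stage IV of Algorithm \ref{algchainc}: for $i$ running from $k$ down to $1$ (or in the order dictated by the blocks of $\rev(\Gamma_r(k))$), one transports the entry in position $i$ to its target $C'(i)$ using only the admissible reflections $(l,\overline{\jmath})$, selecting at each step the first admissible swap that respects the circular-order inequality of Proposition \ref{critc}(2). The key point is that Conditions \ref{condr1}-\ref{condr3} guarantee both termination (the target value $C'(i)$ is available in a reachable position, by Condition \ref{condr1} and the window-preservation remark) and admissibility (Condition \ref{condr3} ensures no obstructing value sits between source and target, so each greedy step is a genuine quantum Bruhat edge). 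Uniqueness then follows by the same cross-over argument as in the proof of Lemma \ref{chconst}(2) and Remark \ref{rem-chain}(3): any deviation from the greedy choice would force, at some step, a transposition of the value in position $i$ \emph{across} an intermediate entry, violating the criterion of Proposition \ref{critc}.

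The necessity direction is the converse bookkeeping: given such a path, I would read off Conditions \ref{condr1}-\ref{condr3} and \ref{cond1}$'$ from the monotonicity (\ref{monotcr}) and the edge criteria, as sketched above. I expect the main obstacle to be the careful verification that the \emph{only} reflections needed are those in $\rev(\Gamma_r(k))$ and that no reflection of stages I-III is ever required; equivalently, that restricting Algorithm \ref{algchainc} to stage IV alone suffices precisely when Conditions \ref{condr1} and \ref{condr2} hold. This requires showing that, under these conditions, the auxiliary quantity $M(u,i,C')$ of Lemma \ref{lemrefla} already equals $C(i)$ (so stages I-III are empty) and that the sign-matching hypothesis of Condition \ref{condr2} makes the $(i,\overline{\imath})$ reflection of stage III unnecessary. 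Once this localization to stage IV is established, the remaining arguments are direct transcriptions of the stage-IV analysis already carried out in the proof of Proposition \ref{baseconstr}.
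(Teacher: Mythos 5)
Your proposal is correct and follows essentially the same route as the paper: necessity of Conditions \ref{cond1}$'$ and \ref{condr1}--\ref{condr3} via Proposition \ref{neccond1}, the form of the roots in $\Gamma_r(k)$, and the criterion of Proposition \ref{critc} (2); sufficiency and uniqueness by iterating Algorithm \ref{algchainc} and checking that Conditions \ref{condr1}--\ref{condr3} force it to select only the stage-IV reflections, i.e., those in $\Gamma_r(k)$. Two cosmetic slips only: $\rev(\Gamma_r(k))=\rev(\Gamma_k)\ldots\rev(\Gamma_2)$ (the blocks reverse order as well), and the reflection $(i,\overline{\imath})$ belongs to stage II rather than stage III.
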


\begin{proof} The necessity of Condition {\rm \ref{cond1}$'$} is the content of Proposition \ref{neccond1}. The necessity of Condition {\rm \ref{condr1}}
is clear from the type of the roots in $\Gamma_r(k)$, while that of Conditions {\rm \ref{condr2}} and {\rm \ref{condr3}} from the corresponding quantum Bruhat graph criterion, namely Proposition {\rm \ref{critc} (2)}. Note that ${\rm int}(C,C')$ consists of the entries in positions $k+1,\ldots,n,\overline{n},\ldots,\overline{k+1}$ over which two values are transposed at some point in the given path (the entries in the mentioned positions are never moved). Viceversa, if all the mentioned conditions are satisfied, then we obtain the desired path by iterating Algorithm \ref{algchainc} (cf. Proposition \ref{proplc} and its proof). Indeed, Condition \ref{condr1} guarantees that, at any given point, the target entry $C'(i)$ cannot be in positions $k+1,\ldots,n,\overline{n},\ldots,\overline{k+1}$ of the current permutation; then Conditions \ref{condr2} and \ref{condr3} ensure that the algorithm never selects reflections of the form $(i,\overline{\imath})$, $(i,m)$, or $(i,\overline{m})$, where $m>k$, and therefore all the selected  reflections are in $\Gamma_r(k)$. 
\end{proof}

\subsection{Reduction to sorted columns}

Consider the ordered columns $D':={\rm ord}(C')$ and $D:={\rm ord}(C)$, where the notation is the same as above.

\begin{lemma}\label{lem2} Assume that the pair $C'C$ satisfies Conditions {\rm \ref{cond1}$'$} and {\rm \ref{condr1}}. 

{\rm (1)} Assuming that $C'C$ satisfies Condition {\rm \ref{condr2}} as well, the set ${\rm int}(C,C')$ contains an element $a$ if and only if it contains $\overline{a}$. 

{\rm (2)} The pair $C'C$ satisfies Conditions {\rm \ref{condr2}} and {\rm \ref{condr3}} if and only if the pair $D'D$ satisfies the same conditions. 
\end{lemma}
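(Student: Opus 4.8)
The plan is to prove the two parts of Lemma~\ref{lem2} separately, exploiting the fact that both $\mathrm{int}(C,C')$ and the validity of Conditions~\ref{condr2} and \ref{condr3} are essentially symmetric in the entries and invariant under reordering, once Conditions~\ref{cond1}$'$ and \ref{condr1} are in force. For part~(1), I would argue as follows. Condition~\ref{condr1} says that $\{|C(i)|\}=\{|C'(i)|\}$, so the multiset of absolute values is preserved by the passage from $C$ to $C'$, and hence the entries of $C$ and $C'$ together fill out the same set of $\pm$-pairs. The set $\mathrm{int}(C,C')$ is by definition obtained from the union of the open arcs (in the linear order on $[\overline{n}]$) from $C(i)$ to $C'(i)$, after removing all the values $\pm C(i)$. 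Under Condition~\ref{condr2} each such arc lies entirely in $[n]$ (when $C(i)\le C'(i)\le n$) or entirely in $[\overline{n}]\setminus[n]$ (when $\overline{n}\le C(i)\le C'(i)$); and the two regimes are exchanged by the bar involution $x\mapsto\overline{x}$, which sends the arc from $C(i)$ to $C'(i)$ in one block to the arc from $\overline{C'(i)}$ to $\overline{C(i)}$ in the other. The key point is that, because of Condition~\ref{condr1}, the collection of endpoint-pairs $\{C(i),C'(i)\}$ is stable (as a set of $\pm$-pairs) under this involution, so the union of arcs is bar-stable, and therefore so is the result of deleting the bar-stable exclusion set $\{\pm C(i)\}$. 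Thus $a\in\mathrm{int}(C,C')$ iff $\overline{a}\in\mathrm{int}(C,C')$.

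For part~(2), I would reduce the statement for $C'C$ to the statement for the sorted pair $D'D$ by showing that sorting does not change either $\mathrm{int}$ or the relevant monotonicity data. First I would observe that Condition~\ref{condr2} for $C'C$ is precisely the statement that, position by position, $C(i)$ and $C'(i)$ lie in the same block ($[n]$ or its complement) with $C(i)\le C'(i)$; combined with Condition~\ref{condr1}, this forces a very rigid structure, namely that $C'$ is obtained from $C$ without ``crossing'' the boundary between the two blocks in any coordinate. The plan is to argue that under Conditions~\ref{cond1}$'$ and \ref{condr1} the two columns have, in each block, the same multiset of entries-with-sign, so that sorting $C$ and $C'$ simultaneously pairs up the $i$-th smallest entry of $D$ with the $i$-th smallest entry of $D'$ in a way that preserves the block and the inequality $D(i)\le D'(i)$ exactly when Condition~\ref{condr2} held for $C'C$. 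Hence Condition~\ref{condr2} for $C'C$ is equivalent to Condition~\ref{condr2} for $D'D$. For Condition~\ref{condr3}, I would use that $\mathrm{int}(C,C')$ depends only on the union of the closed intervals spanned in each block by the pairs $\{C(i),C'(i)\}$ (minus the exclusion set), and this union is unchanged by simultaneously sorting: the multiset of left endpoints and of right endpoints within each block is the same for $(C,C')$ and for $(D,D')$ once the matching of entries is fixed, so the covered set of values is identical.

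The main obstacle I anticipate is part~(2), and specifically making rigorous the claim that sorting pairs entries up compatibly. The subtlety is that $C$ and $C'$ are arbitrary (unsorted) columns, so a priori the correspondence $i\mapsto i$ between positions of $C$ and $C'$ has no order-theoretic meaning; what I must show is that the combined constraints (Condition~\ref{cond1}$'$ forbids certain relative orderings of values in adjacent columns, and Condition~\ref{condr1} fixes the absolute values) force a unique order-preserving bijection between the entries of $C$ and those of $C'$ that respects blocks and the inequality. I expect the cleanest route is to separate the entries of each column into its positive block and its negative block, note that Condition~\ref{condr1} together with Condition~\ref{condr2} forces $C$ and $C'$ to have the same number of entries in each block (since no coordinate crosses the boundary), and then verify that within each block Condition~\ref{cond1}$'$ prevents any inversion that would cause sorting to re-pair the endpoints and alter the spanned intervals. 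Once this matching is established, both the $\mathrm{int}$ computation and the coordinatewise inequality transfer verbatim between $C'C$ and $D'D$, and the equivalence in part~(2) follows; part~(1) is then immediate from the bar-stability already proved.
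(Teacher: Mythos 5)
Your part (1) rests on two intermediate claims that are false in general. Take $n=5$, $k=3$, $C=(1,3,\overline{5})$, $C'=(3,5,\overline{1})$; this pair satisfies Conditions \ref{cond1}$'$, \ref{condr1} and \ref{condr2}. The unordered endpoint pairs are $\{1,3\}$, $\{3,5\}$, $\{\overline{5},\overline{1}\}$, and the bar-image $\{\overline{1},\overline{3}\}$ of the first is not among them, so the collection of pairs is \emph{not} stable under the involution. Moreover the union of the open arcs is $\{2\}\cup\{4\}\cup\{\overline{4},\overline{3},\overline{2}\}$, which contains $\overline{3}$ but not $3$, so it is \emph{not} bar-stable either; only after deleting $\{\pm C(i)\}$ does one obtain the bar-stable set $\{2,4,\overline{4},\overline{2}\}$. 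Hence you cannot first establish bar-stability of the union of arcs and then delete the exclusion set; the deletion is essential and the bar-stability of the difference needs an actual argument. One that works: for a free value $v$ (i.e.\ $|v|\notin|C|$), the number of positive-block pairs with $C(i)<v<C'(i)$ equals $\#\{i:C(i)<v\}-\#\{i:C'(i)<v\}$ restricted to the positive block, and Condition \ref{condr1} forces this to equal the analogous count of negative-block pairs straddling $\overline{v}$. The paper argues differently but in the same spirit of avoiding your claim: it identifies ${\rm int}(C,C')$ with the set of entries in positions $k+1,\ldots,n,\overline{n},\ldots,\overline{k+1}$ jumped over by the reflections of the reconstruction algorithm, and observes that each reflection is a double transposition which jumps over $a$ in position $l$ exactly when it jumps over $\overline{a}$ in position $\overline{l}$.

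For part (2) your key observation --- that for any matching in which every pair satisfies left $\le$ right within its block, the union of the closed intervals depends only on the multisets of left and right endpoints --- is correct and is a clean substitute for the paper's computation. But two things need repair. First, sorting genuinely does re-pair the endpoints: for $C=(3,1,\overline{5},\overline{4})$, $C'=(4,5,\overline{3},\overline{1})$ (which satisfies \ref{cond1}$'$, \ref{condr1}, \ref{condr2}) the positional matching contains $3\mapsto 4$, $1\mapsto 5$ while the sorted one contains $1\mapsto 4$, $3\mapsto 5$; so the statement to prove is not that Condition \ref{cond1}$'$ prevents re-pairing, but that re-pairing preserves the sign condition and the covered set. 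Second, and more seriously, the implication ``$D'D$ satisfies \ref{condr2} and \ref{condr3} $\Rightarrow$ $C'C$ does'' is left entirely to a ``verification'': validity of the sorted pairing says nothing a priori about the positional one. The missing mechanism, which is the heart of the paper's proof, is that Condition \ref{cond1}$'$ characterizes the positional matching as the greedy matching for the scan order $C(1),\ldots,C(k)$ (each red entry matched to the circularly next available blue entry), and that greedy matchings for different scan orders differ only by local swaps $\{(a,a'),(b,b')\}\leftrightarrow\{(b,a'),(a,b')\}$ with $a<b\le a'<b'$, which preserve both the sign condition and $[a,a']\cup[b,b']$. Without this (or an equivalent) bridge between the positional and sorted matchings, part (2) does not close in either direction.
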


\begin{proof} For the first part, assume that we iteratively apply a modification of Algorithm \ref{algchainc} in which the positions $k+1,\ldots,n,\overline{n},\ldots,\overline{k+1}$ are ignored (so they never appear in a list $L$), cf. the proof of Lemma \ref{lem1}. This is clearly possible, by the above remarks. Moreover, it is still true that ${\rm int}(C,C')$ consists of the entries in positions $k+1,\ldots,n,\overline{n},\ldots,\overline{k+1}$ over which two values are transposed at some point in the above procedure. But these entries always occur in pairs $a,\overline{a}$.

For the second part, arrange all the entries in $C$ and $C'$ in increasing order, and match each entry $C(i)$, which we color red, with $C'(i)$, which we color blue. By Condition {\rm \ref{cond1}$'$}, this matching is obtained by scanning the red elements in the order $C(1),\ldots,C(k)$, and by matching each one with the leftmost unmatched blue entry to its right. We claim that the same algorithm can be applied for any order of the red elements, leading to other matchings; moreover, the sign of two matched entries is always the same. Indeed, let us swap the order of two red elements, say $a,b$, which were initially matched with $a'\ge a$ and $b'\ge b$, respectively. If the intervals $[a,a']$ and $[b,b']$ have empty intersection (in particular if $a$ and $b$ have different signs), the matching is not changed, whereas if they do, then $b$ becomes matched with $a'$ and $a$ with $b'$; the latter case happens precisely when $a<b\le a'<b'$ or $b<a\le a'<b'$. Note that if the red elements (in $C$) are scanned in increasing order, then the matched entries are in increasing order too. It follows that the pair $D'D$ satisfies Condition {\rm \ref{condr2}}. Condition {\rm \ref{condr3}} follows from the above local rule for changing a matching. Finally, the opposite implication is clear by the same reasoning.
\end{proof}

\subsection{The relationship with KN columns and the conclusion of the proof} We start with a simple construction. Consider two increasing sequences of integers (sorted columns) $A=\{a_1< a_2< \ldots<a_k\}$ and $B=\{b_1< b_2<\ldots<b_l\}$. Define the column ${\rm maxcol}(A,B)$ as the sorted column $C=\{c_1< c_2< \ldots<c_k\}$ given by
\[{\rm maxcol}(A,B):=\max\;\{C\::\:C\le A\,, C\cap B=\emptyset\}\,;\]
here the column comparison means entrywise comparison in ${\mathbb Z}$. It is easy to see that the entries $c_i$ are constructed recursively  by
\begin{equation}\label{match1}c_k:=\max\;\{c\in{\mathbb Z}\::\:c\le a_k\}\setminus B\,,\;\;\;\;c_i:=\max\;\{c\in{\mathbb Z}\::\:c\le a_i,\,c<c_{i+1}\}\setminus B\,,\end{equation}
for $i=k-1,\ldots,1$. 

\begin{lemma}\label{lem3} {\rm (1)} For all $i=1,\ldots,k$, we have
\[[c_i,a_i]\cap{\mathbb Z}\subseteq B\cup\{c_i,c_{i+1},\ldots,c_k\}\,.\]

{\rm (2)} We have $A\setminus B\subseteq {\rm maxcol}(A,B)$. Moreover, we have
\begin{equation}\label{eqlem3} {\rm maxcol}(A,B)=(A\setminus B)\sqcup {\rm maxcol}(A\cap B,A\cup B)\,,\end{equation}
where $\sqcup$ denotes disjoint union.
\end{lemma}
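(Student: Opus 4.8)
The plan is to prove the two parts of Lemma \ref{lem3} by exploiting the explicit recursive construction (\ref{match1}) of the entries $c_i$ of ${\rm maxcol}(A,B)$. Part (1) is the foundational combinatorial fact, and part (2) will follow from it together with a careful bookkeeping argument.

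For part (1), I would argue by downward induction on $i$, starting at $i=k$. The key observation is that $c_i$ is, by (\ref{match1}), the largest integer not in $B$ which is at most $a_i$ and strictly below $c_{i+1}$; consequently every integer $c$ strictly between $c_i$ and $a_i$ must fail one of these two constraints. Such a $c$ satisfies $c\le a_i$ automatically, so the only way it can be rejected is if $c\in B$ or if $c$ violates the gap constraint $c<c_{i+1}$, i.e.\ $c\ge c_{i+1}$. Since $c_i<c_{i+1}\le\ldots\le c_k$ and (by the recursion) the $c_j$ for $j>i$ occupy the positions just below $a_j\le a_k$, any integer $c$ in $[c_i,a_i]$ that is not in $B$ and not equal to $c_i$ must coincide with one of $c_{i+1},\ldots,c_k$; this is exactly the claimed containment $[c_i,a_i]\cap\mathbb{Z}\subseteq B\cup\{c_i,\ldots,c_k\}$. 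The inductive hypothesis supplies that the $c_j$ with $j>i$ already cover the relevant intervals above $c_{i+1}$, so no new integers escape.

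For part (2), the inclusion $A\setminus B\subseteq{\rm maxcol}(A,B)$ should come out of part (1): if $a_i\notin B$, then $a_i$ itself is an admissible candidate in the definition of $c_i$ (it avoids $B$, and one checks using (\ref{match1}) that $a_i<c_{i+1}$ holds whenever $a_i\notin B$, since otherwise $c_{i+1}$ would have been forced lower), forcing $c_i=a_i$. To establish the decomposition (\ref{eqlem3}), I would show that the entries of ${\rm maxcol}(A,B)$ split into those equal to some $a_i\notin B$ (which contribute exactly $A\setminus B$) and the remaining entries, which are precisely those produced when the constraint against $B$ actively lowers an $a_i\in B$. For the latter I would verify that running the recursion (\ref{match1}) on the pair $(A\cap B,\,A\cup B)$ reproduces exactly these lowered entries: replacing the candidate ceiling set by $A\cap B$ and the forbidden set by $A\cup B$ has the effect of excising the already-placed entries $A\setminus B$ and restricting attention to the indices where $B$ bites. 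The disjointness of the union is immediate once one checks that a lowered entry can never coincide with an element of $A\setminus B$, which again follows from the strict gap condition $c_i<c_{i+1}$.

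The main obstacle will be making the decomposition (\ref{eqlem3}) precise, since it requires tracking how the recursive definition behaves under the simultaneous replacement $A\rightsquigarrow A\cap B$ and $B\rightsquigarrow A\cup B$. The subtlety is that removing the entries $A\setminus B$ from consideration changes the gap constraints $c<c_{i+1}$ in a way that must be shown to exactly match the constraints arising in the reduced problem; I expect this to hinge on the monotonicity of the $c_i$ established in part (1) and on a careful index-matching argument showing that the ``active'' constraints against $B$ are the same in both computations. Once this correspondence is set up, the identity (\ref{eqlem3}) becomes a routine verification.
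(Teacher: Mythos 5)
Your argument for part (1) is essentially correct and is a mild variant of the paper's: you run a downward induction on $i$ (maximality in (\ref{match1}) forces any $x\in(c_i,a_i]\setminus B$ to satisfy $x\ge c_{i+1}$, and since $x\le a_i\le a_{i+1}$ the inductive hypothesis for $i+1$ places $x$ in $\{c_{i+1},\ldots,c_k\}$), whereas the paper instead picks the largest $c_j$ in $[c_i,x]$ and contradicts its maximality directly. Either route is fine, though you should state explicitly that the inductive step uses $x\le a_{i+1}$.

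Part (2), however, contains a genuine error and a genuine gap. The error: you claim that $a_i\notin B$ forces $c_i=a_i$, via the assertion that $a_i<c_{i+1}$ whenever $a_i\notin B$. This is false. Take the paper's own Example \ref{exdoub}: $A=\{3,4,5\}$, $B=\{4,5\}$ gives ${\rm maxcol}(A,B)=\{1,2,3\}$, so $a_1=3\notin B$ yet $c_1=1$ and $a_1=3\ge c_2=2$; the element $a_1$ survives only as $c_3$. The inclusion $A\setminus B\subseteq{\rm maxcol}(A,B)$ is true, but the correct derivation is the one you first gestured at and then abandoned: apply part (1) to $x=a_i\in[c_i,a_i]$. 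The gap: the identity (\ref{eqlem3}) is never actually proved — you explicitly defer the ``careful index-matching argument,'' which is the entire content of the claim. Moreover, the shape of the argument you sketch (an index-wise split into positions where $B$ ``bites'' and positions where it does not) cannot be made to work, because the correspondence is not index-wise: in the example above, $a_3=5\in B$ is lowered to $c_3=3=a_1$, while ${\rm maxcol}(A\cap B,A\cup B)=\{1,2\}$ occupies positions $1,2$. The paper circumvents this by peeling off one element $x=a_i\in A\setminus B$ at a time, proving ${\rm maxcol}(A,B)=\{x\}\sqcup{\rm maxcol}(A\setminus\{x\},B\sqcup\{x\})$ through an explicit cyclic re-matching $(a_i,c_j{=}x),(a_j,c_{j-1}),\ldots,(a_{i+1},c_i)$ where $a_i=c_j$, and then iterating; some version of this index shift is unavoidable and is missing from your proposal. (Also, disjointness in (\ref{eqlem3}) needs no argument about gap conditions: ${\rm maxcol}(A\cap B,A\cup B)$ avoids $A\cup B\supseteq A\setminus B$ by definition.)
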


Before presenting the proof, we consider an example.

\begin{example}\label{exdoub} {\rm Let $A=\{3,4,5\}$ and $B=\{4,5\}$. The construction of ${\rm maxcol}(A,B)=\{1,2,3\}$ based on (\ref{match1}) can be represented graphically as in the left
figure below; the $a_i$ and $b_i$ are displayed as dots in the left and right columns, respectively, while the arrows represent the matches of $a_i$ with $c_i$. The similar matching corresponding to the right-hand side of (\ref{eqlem3}) is shown in the figure on the right.

\pagebreak

$\;$

\vspace{-17.7cm}


\[\;\;\;\;\;\;\;\;\;\;\;\;\;\;\;\;\;\;\;\;\;\;\;\;\;\;\;\;\;\;\;\;\;\;\;\;\;\mbox{\includegraphics[scale=0.7]{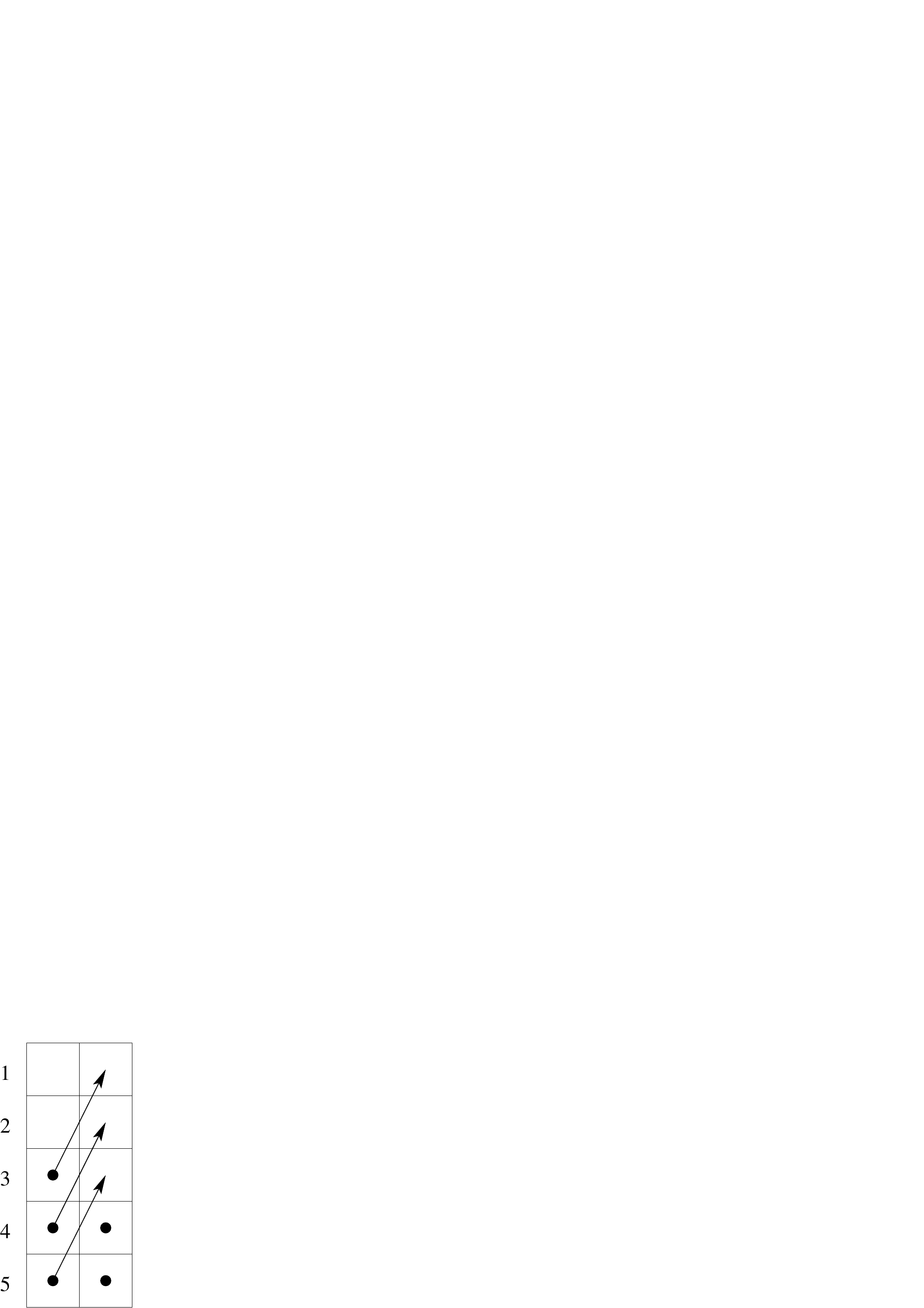}}\!\!\!\!\!\!\!\!\!\!\!\!\!\!\!\!\!\!\!\!\!\!\!\!\!\!\!\!\!\!\!\!\!\!\!\!\!\!\!\!\!\!\!\!\!\!\!\!\!\!\!\!\!\!\!\!\!\!\!\!\!\!\!\!\!\!\!\!\!\!\!\!\!\!\!\!\!\!\!\!\!\!\!\!\!\!\!\!\!\!\!\!\!\!\!\!\!\!\!\!\!\!\!\!\!\!\!\!\!\!\!\!\!\!\!\!\!\!\!\!\!\!\!\!\!\!\!\!\!\!\!\!\!\!\!\!\!\!\!\!\!\!\!\!\!\!\!\!\!\!\!\mbox{\includegraphics[scale=0.7]{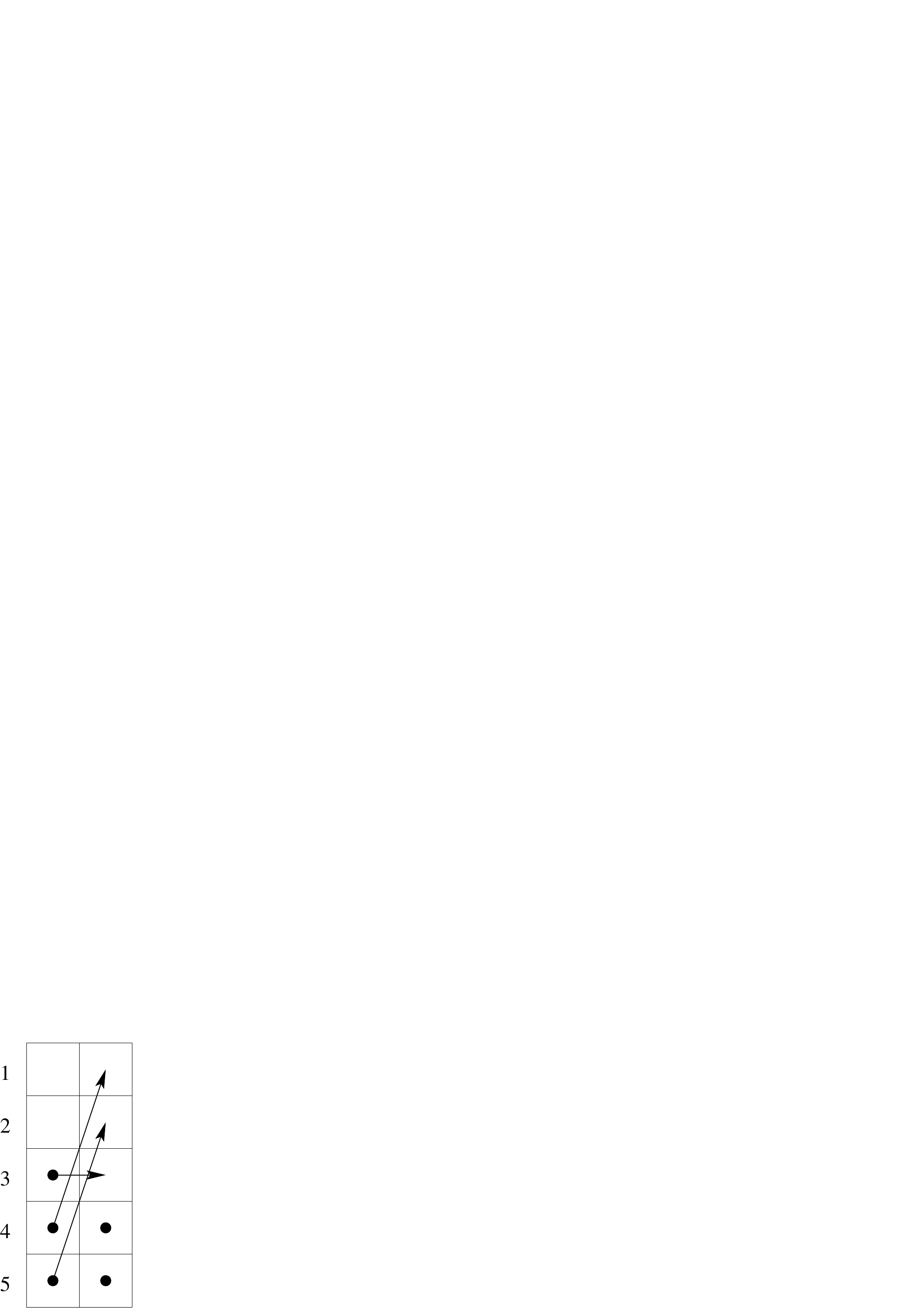}}\]
}
\end{example}

\begin{proof}[Proof of Lemma {\rm \ref{lem3}}] For the first part, assume that $x$ is an integer in $[c_i,a_i]$ which does not belong to $B\cup{\rm maxcol}(A,B)$. Then consider the largest $c_j$ in $[c_i,x]$, where $j\ge i$. Clearly, $c_j$ does not satisfy the maximality condition (\ref{match1}). 

The inclusion $A\setminus B\subseteq {\rm maxcol}(A,B)$ immediately follows from the first part. In order to prove (\ref{eqlem3}), we will show that, for any $x=a_i\in A\setminus B$, we have
\begin{equation}\label{remx}{\rm maxcol}(A,B)=\{x\}\sqcup {\rm maxcol}(A\setminus\{x\},B\sqcup\{x\})\,,\end{equation}
and then we apply this iteratively. As $x\in {\rm maxcol}(A,B)$, we have $c_i<c_{i+1}<\ldots<c_j=x$ for some $j\ge i$. Then the matching $(a_p,c_p)$, for $p=1,\ldots,k$, which corresponds to the left-hand side of (\ref{remx}), can be partially modified as follows: 
\[(x=a_i,c_j=x)\,,\;(a_j,c_{j-1})\,, \,\ldots\,(a_{i+2},c_{i+1})\,,\;(a_{i+1},c_i)\,.\]
But the latter matching corresponds to the right-hand side of (\ref{remx}) by (\ref{match1}). See Example \ref{exdoub} for an illustration of this procedure.
\end{proof}

Now consider a pair of sorted columns $D'D$ of the same height, with entries in $[\overline{n}]$. Split $D'$ and $D$ into their positive parts $D'_+$, $D_+$ (i.e., consisting of positive entries) and negative parts $D'_-$, $D_-$ (i.e., consisting of negative entries). Given a column $C$, we denote by $|C|$ the column obtained by taking the absolute value of all the entries in $C$ (and possibly reordering the new entries). 


\begin{proposition}\label{lem4} Given a pair of sorted columns $D'D$ of the same height, the following are equivalent:
\begin{enumerate}
\item the pair $D'D$ satisfies Conditions {\rm \ref{condr1}}, {\rm \ref{condr2}}, and {\rm \ref{condr3}};
\item we have 
\[|D'_-|={\rm maxcol}(|D_-|,D'_+)\,,\;\;\;\mbox{and}\;\;\; D_+=(D'_+\cup |D'_-|)\setminus |D_-|\,;\]
\item $D'$ and $D$ are respectively the right and left columns which represent the splitting of the KN column with positive part $D'_+$ and negative part $D_-$.
\end{enumerate}
\end{proposition}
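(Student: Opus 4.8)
The plan is to establish the three-way equivalence by first proving $(1)\Leftrightarrow(2)$ via the combinatorial structure of \texttt{maxcol}, and then proving $(2)\Leftrightarrow(3)$ by unwinding Definition~\ref{defkn}. For the first equivalence, I would exploit the matching description of Condition~\ref{cond1}$'$ developed in the proof of Lemma~\ref{lem2}(2): since $D'D$ are sorted, Condition~\ref{condr2} says precisely that each $D(i)\le D'(i)$ entrywise (on the appropriate sign side), and Condition~\ref{condr3} says no element of $\mathrm{int}(D,D')$ exists, i.e. the integer intervals $[D(i),D'(i)]$ contain no ``free'' values outside $\{\pm D(i)\}$. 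The key observation is that Condition~\ref{condr1} forces $\{|D(i)|\}=\{|D'(i)|\}$, so the positive/negative splitting is a genuine redistribution of absolute values; combined with Condition~\ref{condr2}, the negative entries of $D'$ must be \emph{smaller} in absolute value than where they started (they move toward $0$), and the positive entries must grow. This is exactly the extremal ``push down as far as possible while avoiding a forbidden set'' behavior encoded by \texttt{maxcol}.

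Concretely, I would argue that under Conditions~\ref{condr1}--\ref{condr3}, the negative part $|D'_-|$ is forced to equal $\mathrm{maxcol}(|D_-|,D'_+)$: Condition~\ref{condr2} bounds $|D'_-|$ above by $|D_-|$ entrywise (after sign reversal, moving up toward $0$ means the absolute value decreases below $|D_-|$), Condition~\ref{condr1} with disjointness forbids $|D'_-|$ from colliding with $D'_+$, and Condition~\ref{condr3} (emptiness of the interior) forces maximality—any slack would produce an unmatched interior value, contradicting $\mathrm{int}=\emptyset$. Here Lemma~\ref{lem3}(1) is exactly the tool certifying that the interval $[c_i,a_i]$ is filled only by $B\cup\{c_i,\ldots,c_k\}$, which translates to the interior being empty. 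The companion identity $D_+=(D'_+\cup|D'_-|)\setminus|D_-|$ is then bookkeeping: it records which absolute values end up positive in $D$, and it follows from Condition~\ref{condr1} (the total multiset of absolute values is preserved) together with the determination of $|D'_-|$. The decomposition~(\ref{eqlem3}) in Lemma~\ref{lem3}(2) is what lets me peel off the entries common to $D$ and $D'$ and reduce to the genuinely moving part, so I would invoke it to streamline this verification.

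For $(2)\Leftrightarrow(3)$, I would directly compare the formula in~(2) with the construction of $rC$ and $lC$ in Definition~\ref{defkn}. Writing $E$ for the KN column with positive part $D'_+$ and negative part $D_-$, the set $I=\{z_1>\ldots>z_r\}$ of Definition~\ref{defkn} consists of the unbarred letters $z$ with both $z,\overline{z}\in E$, i.e. $z\in D'_+\cap|D_-|$; and the greedily chosen set $J=\{t_1>\ldots>t_r\}$ is precisely the collection of new absolute values obtained by pushing the barred letters $\overline{z_i}$ down to the largest available slots avoiding $E$. The recursive ``greatest $t_i<\min(t_{i-1},z_i)$ not in $C$'' rule of Definition~\ref{defkn} matches the recursion~(\ref{match1}) defining \texttt{maxcol} once one identifies the forbidden set $B$ with the occupied letters of $E$ and the upper bounds $a_i$ with the $z_i$; the passage $\overline{z_i}\mapsto\overline{t_i}$ giving $rC$ corresponds to computing $|D'_-|=\mathrm{maxcol}(|D_-|,D'_+)$, while $z_i\mapsto t_i$ giving $lC$ corresponds to the complementary formula for $D_+$. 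I expect the main obstacle to be the bookkeeping of signs and the precise alignment of the two greedy recursions—in particular verifying that the ``$\min(t_{i-1},z_i)$'' bound in Definition~\ref{defkn} coincides with the ``$c<c_{i+1}$'' bound in~(\ref{match1}) after the sorted-column reduction, and that letters appearing unbarred-only or barred-only in $E$ (the entries common to $D$ and $D'$) are correctly carried along unchanged. Lemma~\ref{lem3}(2), via~(\ref{eqlem3}), handles exactly these fixed entries, so I would lean on it to isolate the $I$-part and make the recursions line up cleanly.
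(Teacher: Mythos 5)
Your proposal follows essentially the same route as the paper: the equivalence $(2)\Leftrightarrow(3)$ is obtained by matching the greedy recursion of Definition~\ref{defkn} with the recursion (\ref{match1}) via the decomposition (\ref{eqlem3}), and $(1)\Leftrightarrow(2)$ is obtained by showing that Condition~\ref{condr3} forces the maximality defining ${\rm maxcol}$ (slack produces an interior element), with Lemma~\ref{lem3} supplying the converse. The one detail your sketch leaves open is that Lemma~\ref{lem3}(1) directly yields only ${\rm int}(D,D')\setminus[n]=\emptyset$, i.e.\ the barred half of the interior coming from the negative entries; to conclude that the unbarred half (coming from the intervals $[D_+(i),D'_+(i)]$) is empty as well, the paper invokes the pairing $a\in{\rm int}(D,D')\Leftrightarrow\overline{a}\in{\rm int}(D,D')$ from Lemma~\ref{lem2}(1), a step you would need to add.
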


Before presenting the proof, we consider an example. 

\begin{example}\label{exknmax}{\rm Consider the KN column $C$ in Example \ref{exkn}, and let $D':=rC$, $D:=lC$. Note that $|D_-|$ and $D'_+$ are precisely the columns $A$ and $B$ in Example \ref{exdoub}, respectively. Note that the construction of  $|D'_-|={\rm maxcol}(|D_-|,D'_+)$ given by the right-hand side of (\ref{eqlem3}), when viewed as the corresponding matching in Example \ref{exdoub}, is precisely the construction of the split column $(D',D)$ in Definition \ref{defkn}.
}
\end{example}

\begin{proof}[Proof of Proposition {\rm \ref{lem4}}] Note first the the equivalence of (2) and (3) is clear by (\ref{eqlem3}), because the construction of a split column in Definition \ref{defkn} is given by the right-hand side of (\ref{eqlem3}), see Examples \ref{exdoub} and \ref{exknmax}.

Under the assumptions of (1), let $|D_-|=\{d_1<d_2<\ldots<d_l\}$ and $|D'_-|=\{d_1'<d_2'<\ldots<d_l'\}$. To prove $(1)\Rightarrow (2)$, assume that the construction of $D'_-$ in (2) fails, so 
\[d_i'<\max \;\{d\in [n]\::\:d\le d_i,\,d<d_{i+1}'\}\setminus D_+'=:x\;\;\;\;\mbox{for some $i$}\,.\]
We cannot have $x\in |D_-|$, because then the construction of $D_+$ in (2) would produce a longer column than $D_+'$. But then $\overline{x}\in{\rm int}(D,D')$, so Condition \ref{condr3} fails.

To prove $(2)\Rightarrow (1)$, note first that, by Lemma \ref{lem3} (2), the constructions in (2) ensure that $\#D_-=\#D'_-$, $\#D_+=\#D'_+$, and $D_+\sqcup D_-=D_+'\sqcup D_-'$. It follows that Conditions (\ref{condr1}) and (\ref{condr2}) are satisfied. Using the same notation as above, by Lemma \ref{lem3} (1) we have
\[[d_i',d_i]\cap[n]\,\subseteq \,\{d_i',\ldots,d_l'\}\cup D'_+\;\;\;\;\mbox{for $i=1,\ldots,l$}\,,\]
which implies ${\rm int}(D,D')\setminus[n]=\emptyset$. Then ${\rm int}(D,D')=\emptyset$ by Lemma \ref{lem2} (1), so Condition \ref{condr3} is satisfied. 
\end{proof}

We now conclude the proof of Proposition \ref{proprc}.

\begin{proof}[Proof of Proposition {\rm \ref{proprc}}] Immediate based on Lemma \ref{lem1}, Lemma \ref{lem2} (2), and Proposition \ref{lem4}.
\end{proof}

\section{The charge in types $B$ and $D$}\label{typebd} We believe that the above results can be extended to types $B$ and $D$. More precisely, we claim that there is a bijection similar to the one in Theorem \ref{bijc} between the corresponding set of folding pairs $\overline{\mathcal F}(\mu)$ and the corresponding tensor product of KR-crystals $B_\mu$. In addition, we claim that, like in Theorem \ref{levchc}, this bijection leads to a translation of the statistic ``level'' in the Ram-Yip formula (\ref{ryform}) into a charge statistic, which expresses the corresponding energy function. These claims are supported by \cite{ST:2011}[Corollary 9.5] expressing a type $D$ Macdonald polynomial at $t=0$ in terms of the corresponding energy function. 

The above problems in types $B$ and $D$ display additional complexity, due to some new aspects, that we now describe. We start by referring to type $B_n$, as type $D_n$ has all the complexity of type $D_n$ plus additional one. We label the positive roots in type $B_n$ and the corresponding reflections as in type $C_n$; in particular, the root $\varepsilon_i$ is labeled $(i,\overline{\imath})$. We fix $k$ with $1\le k\le n-1$, and note that there is an $\omega_k$-chain $\Gamma(k)$ very similar to the one in type $C$, see Section \ref{setc} and \cite{lenhhl}[Section 5]; namely  $\Gamma(k)=\widehat{\Gamma}_r(k){\Gamma}_l(k)$, where $\Gamma_l(k)$ is as in type $C_n$, and $\widehat{\Gamma}_r(k):=\widehat{\Gamma}_1\ldots\widehat{\Gamma}_k$, where $\widehat{\Gamma}_i$ is obtained from $\Gamma_i$ in type $C_n$ by appending to it the root $(i,\overline{\imath})$. For the corresponding KN columns, indexing the vertices of the crystals $B(\omega_i)$ corresponding to the fundamental representations $V(\omega_i)$, we refer to \cite{kancgr}. 

The first new aspect in type $B$ is related to the splitting of the KR crystal $B^{k,1}$ upon removing the 0-arrows as the following direct sum of (classical) crystals: $B(\omega_{k})\oplus B(\omega_{k-2})\oplus\ldots$. In terms of the folding pairs $(w,T)$ in $\overline{\mathcal F}(\mu)$, the mentioned phenomenon manifests itself in the existence of quantum edges in the paths corresponding to $(w,T)$, which was not the case in type $C$. We illustrate this based on the following example.

\begin{example}{\rm Let $\mu=\omega_4$ in $B_5$. Consider $(w,T)=(w,T_rT_l)$ with $w=3\overline{2}145$, which is uniquely determined by the path in the quantum Bruhat graph $\pi(w,T_r)$ below (read from right to left):
\[  w=\begin{array}{l}\tableau{{\mathbf 3}\\{\mathbf {\overline{2}}}\\{1}\\{4}}\\ \\ \tableau{{5}} \end{array}\!>\!
\begin{array}{l}\tableau{{2}\\{\mathbf {\overline{3}}}\\{1}\\{\mathbf 4}}\\ \\ \tableau{{5}} \end{array}\!>\!
\begin{array}{l}\tableau{{2}\\{\overline{4}}\\{\mathbf 1}\\{\mathbf 3}}\\ \\ \tableau{{5}} \end{array}\!<\!
\begin{array}{l}\tableau{{2}\\{\overline{4}}\\{\overline{3}}\\{\overline{1}}}\\ \\ \tableau{{5}} \end{array}.
\]
Here $T_r$ is a subsequence of $\widehat{\Gamma}_r(4)$, while ${\rm rev}(T_l)$ is the unique subsequence of ${\rm rev}({\Gamma}_l(4))$ consisting of the edge labels of a path in the quantum Bruhat graph (in fact, in the Bruhat graph) from the identity to $2\overline{4}\overline{3}\overline{1}5$. Note that the above path has a unique quantum edge, namely the last one. 

The filling map, defined in the same way as in type $C$ (see Definition \ref{deffillc}), sends $(w,T)$ to the filling
\[\sigma=C^rC^l=\tableau{{3}&{2}\\{\overline{2}}&{\overline{4}}\\{1}&{\overline{3}}\\{4}&{\overline{1}}}\,.\]
It turns out that this filling corresponds to the KN column 
\[D=\tableau{{3}\\{\overline{3}}}\;\;\mbox{in}\;\; B(\omega_2)\subset B^{4,1}\simeq B(\omega_4)\oplus B(\omega_2)\oplus B(\omega_0)\,.\]
To recover the filling $\sigma$, we first construct from $D$ an ``extended'' KN column 
\[\widehat{D}=\tableau{{1}\\{3}\\{\overline{3}}\\{\overline{1}}}\,,\]
following a procedure in \cite{schbtd}[Section 3.4]. We claim that the doubling procedure in Definition \ref{defkn} can be extended, but we still match the entries in $I=\{3>1\}$ with certain entries ``preceding'' them; the only difference is that this now requires us to go counterclockwise around the circle. So we obtain  $J=\{2,\overline{4}\}$, which gives the following splitting of $\widehat{D}$ by the usual rule:
\[\tau={\rm ord}(\sigma)=(r\widehat{D},l\widehat{D})=\tableau{{1}&{2}\\{3}&{\overline{4}}\\{4}&{\overline{3}}\\{\overline{2}}&{\overline{1}}}\,.\]
Finally, note that $\sigma$ is obtained from $\tau$ as usual, by setting $C_l=l\widehat{D}$, while $C^r$ is the reordering of $r\widehat{D}$ given by Condition \ref{cond2}$'$, see Remark \ref{remcondc}.

By examining the above path in the quantum Bruhat graph, we can see that ${\level}(w,T)=1$, which agrees with the energy of $D$ in $B^{4,1}$. This also agrees with the charge of $\tau$, computed via the sum in (\ref{chccomp}); indeed, there are two descents in $\sigma$, whose arm lengths are $1$, so ${\rm charge}(\tau)=\frac{1}{2}(1+1)=1$.
}
\end{example}

The second new aspect in type $B$ is the fact that not always a descent in the image of the filling map contributes half its arm length to the charge. We illustrate this with another example.

\begin{example}\label{exxx}{\rm Let $\mu=2\omega_2$ in type $B_3$, so $B_\mu=B^{2,1}\otimes B^{2,1}$. We have the $\mu$-chain $\Gamma=\widehat{\Gamma}_r(2)\Gamma_l(2)\widehat{\Gamma}_r(2)\Gamma_l(2)$. Consider $(w,T)=(w,T_l^2T_l^1)$ with $w=1{2}{3}$ (meaning that $T$ contains no roots in the two segments $\widehat{\Gamma}_r(2)$), which is uniquely determined by the path in the quantum Bruhat graph $\pi(w,T_l^2)$ below (read from right to left):
\[w=\begin{array}{l}\tableau{{1}\\{\mathbf {{2}}}}\\ \\ \tableau{{\mathbf {{3}}}} \end{array}\!<\!
\begin{array}{l}\tableau{{1}\\{\mathbf {\overline{3}}}}\\ \\ \tableau{{{\overline{2}}}} \end{array}\!>\!
\begin{array}{l}\tableau{{1}\\{\mathbf {{3}}}}\\ \\ \tableau{{\mathbf {\overline{2}}}} \end{array}\!<\!
\begin{array}{l}\tableau{{1}\\{{\overline{2}}}}\\ \\ \tableau{{ {{3}}}} \end{array}.\]

The filling map sends $(w,T)$ to the filling
\[\sigma=\tau=\tableau{{1}&{1}&{1}&{1}\\{2}&{2}&{\overline{2}}&{\overline{2}}}\,,\]
which is the doubling of the pair of KN columns
\[D^2D^1=\tableau{{1}&{1}\\{2}&{\overline{2}}}\]
in $B_\mu$. Note that ${\rm level}(w,T)=2$, as there are two quantum edges in $\pi(w,T)$, namely the first and the last one in $\pi(w,T_l^2)$ (as there are no quantum edges corresponding to $T_l^1$). This also agrees with the corresponding energy function. But in $\sigma$ we have only one descent of arm length 2, so we cannot use the sum in (\ref{chccomp}) to compute the charge. The problem is that $1\overline{2}3>123$ is not an edge of the type $B$ quantum Bruhat graph (although it is for the type $C$ one), so in order to change the entry $\overline{2}$ to $2$ we must perform more than a full rotation around the circle; namely we must use the intermediate entries $\overline{2}, 3,\overline{3},2$ in the second position of the signed permutation, and this results in two descents instead of one. This problem occurs because none of the entries between $\overline{2}$ and 2 in circular order, namely $\overline{1}$ and 1, are found after $\overline{2}$ in $1\overline{2}3$, in order to be accessible for the corresponding reflections.
}
\end{example}

The above example suggests that in type $B$ we need to modify the definition of charge by the sum in (\ref{chccomp}) as follows. We use the same notation $\sigma=C_r^{\mu_1}C_l^{\mu_1}\ldots C_r^1C_l^1$ as above. Let ${\rm Des}'(\sigma)$ denote the descents of the form $\overline{m}=C_r^j(i)>C_l^{j+1}(i)=m$ such that, for any $k=1,\ldots,m-1$, we have either $k$ or $\overline{k}$ in $C_r^j[1,i-1]$. We claim that the appropriate definition of the type $B$ charge is given by the following formula:
\begin{equation}\label{bcharge}\frac{1}{2}\sum_{c\in{\rm Des}(\sigma)\setminus{\rm Des}'(\sigma)}{\rm arm}(c)+\sum_{c\in{\rm Des}'(\sigma)}{\rm arm}(c)\,.\end{equation}
Moreover, there is an easy modification of the type $C$ algorithm for reconstructing the folding pair from its image under the filling map, which underlies the charge construction.

We said above that in type $D$ we have additional complexity still. We explain this using another example. Note that we now need to remove the roots/reflections $(i,\overline{\imath})$ from all constructions, which is precisely what leads to the mentioned complications. None of the situations presented below occur in type $B$.

\begin{example}{\rm Let $\mu=2\omega_1$ in type $D_3$. We consider a folding pair of the same form as the one Example \ref{exxx}, so we just indicate the corresponding path in the quantum Bruhat graph $\pi(w,T_l^2)$:
\[w=\begin{array}{l}\tableau{{\mathbf {\overline{3}}}}\\ \\ \tableau{{\mathbf {\overline{2}}}\\{1}}  \end{array}\!>\!
\begin{array}{l}\tableau{{\mathbf {{2}}}}\\ \\ \tableau{{{{3}}}\\{\mathbf {1}}}  \end{array}\!>\!
\begin{array}{l}\tableau{{\mathbf {{1}}}}\\ \\ \tableau{{\mathbf {{3}}}\\{{2}}}  \end{array}\!<\!
\begin{array}{l}\tableau{{{{3}}}}\\ \\ \tableau{{{{1}}}\\{{2}}}  \end{array}.\]
The corresponding filling $\sigma=\tableau{{\overline{3}}&{\overline{3}}&{3}&{3}}$ has no descents, but there is still a quantum edge in the above path, so the charge needs to be 1. The reason for this is that the values $3$ and $\overline{3}$ are incomparable in type $D_3$, so the pair $\tableau{{\overline{3}}&{3}}$ in $\sigma$ needs to contribute $1$ to the charge. In fact, the definition of charge needs to be adjusted even more, as the following more subtle example shows.

Let $\mu=2\omega_2$ in $D_4$. Once again, the considered folding pair is of the same form as the one Example \ref{exxx}, so we just indicate the corresponding path in the quantum Bruhat graph $\pi(w,T_l^2)$:
\[w=\begin{array}{l}\tableau{{\mathbf {\overline{4}}}\\{\mathbf {\overline{3}}}}\\ \\ \tableau{{ {\overline{2}}}\\{{\overline{1}}}} \end{array}\!>\!
\begin{array}{l}\tableau{{ {{3}}}\\{\mathbf {{4}}}}\\ \\ \tableau{{\mathbf {\overline{2}}}\\{{\overline{1}}}} \end{array}\!>\!
\begin{array}{l}\tableau{{ {{3}}}\\{\mathbf {{2}}}}\\ \\ \tableau{{ {\overline{4}}}\\{\mathbf {\overline{1}}}} \end{array}\!>\!
\begin{array}{l}\tableau{{ {{3}}}\\{\mathbf {{1}}}}\\ \\ \tableau{{\mathbf {\overline{4}}}\\{ {\overline{2}}}} \end{array}\!<\!
\begin{array}{l}\tableau{{ {{3}}}\\{ {\overline{4}}}}\\ \\ \tableau{{ {{1}}}\\{ {\overline{2}}}} \end{array}.\]
The corresponding filling, having only ascents or equal entries next to each other in a row, is 
\[\sigma=C_r^2C_l^2C_r^1C_l^1=\tableau{{\overline{4}}&{\overline{4}}&{3}&{3}\\{\overline{3}}&{\overline{3}}&{\overline{4}}&{\overline{4}}}\,.\]
However, there is a quantum edge in the above path, so the charge needs to be 1. The reason for which the charge is not 0, meaning that $\sigma$ is not a split KN tableau, is that the two middle columns in $\sigma$ represent a forbidden configuration in type $D_4$. (Recall from \cite{kancgr} that, in type $D$, in addition to the row monotonicity condition for the split tableau in types $B$ and $C$, there are extra conditions for a sequence of KN columns to form a KN tableau, and these are given in terms of certain forbidden configurations for a pair formed by a right column and the next left column.) 

The last example also highlights the need for an important modification in our algorithm for reconstructing the path from the filling $\sigma$. Indeed, note that the rightmost edge in the displayed path, which is part of a sequence of edges labeled $(2,l)$ or $(1,\overline{2})$, depends not only on $C_l^2(2)$, but also on $C_l^2(1)$ (recall that the path is reconstructed from the right). For instance, if we had 
\[\sigma=C_r^2C_l^2C_r^1C_l^1=\tableau{{{4}}&{{4}}&{3}&{3}\\{\overline{3}}&{\overline{3}}&{\overline{4}}&{\overline{4}}}\]
 instead, we would have chosen the edge labeled $(1,\overline{2})$, and this would have given the entire path $\pi(w,T_l^2)$ associated to $\sigma$. Such a situation did not occur in any of the previous examples. 
}
\end{example}


\end{document}